\numberwithin{equation}{section}
\newtheorem{thm}[subsection]{Theorem}
\newtheorem{cor}[subsection]{Corollary}
\newtheorem{lem}[subsection]{Lemma}
\newtheorem{prop}[subsection]{Proposition}
\theoremstyle{definition}
\newtheorem{df}[subsection]{Definition}
\newtheorem{rmk}[subsection]{Remark}
\newtheorem{exm}[subsection]{Example}
\newtheorem{const}[subsection]{Construction}
\newtheorem{conje}[subsection]{Conjecture}
\newcommand{\A}{\mathbb{A}}
\newcommand{\C}{\mathbb{C}}
\newcommand{\E}{\mathbb{E}}
\newcommand{\F}{\mathbb{F}}
\newcommand{\G}{\mathbb{G}}
\newcommand{\N}{\mathbb{N}}
\renewcommand{\P}{\mathbb{P}}
\newcommand{\Q}{\mathbb{Q}}
\newcommand{\R}{\mathbb{R}}
\newcommand{\unit}{\mathbf{1}}
\newcommand{\T}{\mathbb{T}}
\newcommand{\W}{\mathbb{W}}
\newcommand{\Z}{\mathbb{Z}}
\newcommand{\cB}{\mathcal{B}}
\newcommand{\cC}{\mathcal{C}}
\newcommand{\cD}{\mathcal{D}}
\newcommand{\cF}{\mathcal{F}}
\newcommand{\cO}{\mathcal{O}}
\newcommand{\cP}{\mathcal{P}}
\newcommand{\rD}{\mathrm{D}}
\newcommand{\rN}{\mathrm{N}}
\DeclareMathOperator{\Hom}{Hom}
\DeclareMathOperator{\Spec}{Spec}
\newcommand{\colim}{\mathop{\mathrm{colim}}}
\newcommand{\ul}{\underline}
\newcommand{\Fil}{\mathrm{Fil}}
\newcommand{\can}{\mathrm{can}}
\newcommand{\perf}{\mathrm{perf}}
\newcommand{\gp}{\mathrm{gp}}
\newcommand{\sat}{\mathrm{sat}}
\newcommand{\pt}{\mathrm{pt}}
\newcommand{\Cone}{\mathrm{Cone}}
\newcommand{\op}{\mathrm{op}}
\newcommand{\lSm}{\mathrm{lSm}}
\newcommand{\SmlSm}{\mathrm{SmlSm}}
\newcommand{\Poly}{\mathrm{Poly}}
\newcommand{\lPoly}{\mathrm{lPoly}}
\newcommand{\Ring}{\mathrm{Ring}}
\newcommand{\lRing}{\mathrm{lRing}}
\newcommand{\Pair}{\mathrm{Pair}}
\newcommand{\lPair}{\mathrm{lPair}}
\newcommand{\an}{\mathrm{an}}
\newcommand{\Set}{\mathrm{Set}}
\newcommand{\FinSet}{\mathrm{FinSet}}
\newcommand{\st}{\mathrm{st}}
\newcommand{\Fun}{\mathrm{Fun}}
\newcommand{\Spc}{\mathrm{Spc}}
\newcommand{\sInd}{\mathrm{sInd}}
\newcommand{\id}{\mathrm{id}}
\newcommand{\Env}{\mathrm{Env}}
\newcommand{\Mon}{\mathrm{Mon}}
\newcommand{\cofib}{\mathrm{cofib}}
\newcommand{\fib}{\mathrm{fib}}
\newcommand{\logSH}{\mathrm{logSH}}
\newcommand{\map}{\mathrm{map}}
\newcommand{\eff}{\mathrm{eff}}
\newcommand{\Coh}{\mathrm{Coh}}
\newcommand{\coker}{\mathrm{coker}}
\newcommand{\conj}{\mathrm{conj}}
\newcommand{\gr}{\mathrm{gr}}
\newcommand{\logTC}{\mathrm{logTC}}
\newcommand{\logTP}{\mathrm{logTP}}
\newcommand{\blogTC}{\mathbf{logTC}}
\newcommand{\blogTP}{\mathbf{logTP}}
\newcommand{\logTHH}{\mathrm{logTHH}}
\newcommand{\blogTHH}{\mathbf{logTHH}}
\newcommand{\BMS}{\mathrm{BMS}}
\newcommand{\TC}{\mathrm{TC}}
\newcommand{\trc}{\mathrm{trc}}
\newcommand{\syn}{\mathrm{syn}}
\newcommand{\Zar}{\mathrm{Zar}}
\newcommand{\ketale}{\mathrm{k\acute{e}t}}
\DeclareSymbolFontAlphabet{\mathbb}{AMSb} 
\DeclareSymbolFontAlphabet{\mathbbl}{bbold} 
\DeclareSymbolFontAlphabet{\mathbbl}{bbold} 
\newcommand{\cPrism}{{\widehat{\mathlarger{\mathbbl{\Delta}}}}}
\newcommand{\bPrism}{\mathbf{\widehat{\Delta}}}
\newcommand{\MZ}{\mathbf{M}\Z}
\begin{document}

\title{Log syntomic cohomology of truncated polynomials and coordinate axes}
\author{Doosung Park}
\address{Department of Mathematics and Informatics, University of Wuppertal, Germany}
\email{dpark@uni-wuppertal.de}
\author{Paul Arne {\O}stv{\ae}r}
\address{Department of Mathematics ``F. Enriques'', University of Milan, Italy \&
Department of Mathematics, University of Oslo, Norway}
\email{paul.oestvaer@unimi.it \& paularne@math.uio.no}
\subjclass[2020]{Primary 19D55; Secondary 14A21, 14F42}
\keywords{syntomic cohomology, topological cyclic homology, log motivic homotopy theory}
\date{\today}
\begin{abstract}
We study the logarithmic syntomic cohomology of fine and saturated log schemes and its realization in the logarithmic motivic stable homotopy category $\mathrm{logSH}(\mathrm{pt}_\mathbb{N})$ of a log point.  
We prove that logarithmic prismatic and syntomic cohomology satisfy saturated descent under the sole assumption that the log structure is free, and that the presheaves $\logTHH$, $\mathrm{logTC}$, $\cPrism$, and $\Z_p^\syn(i)$ are representable and $\square$-invariant in $\logSH_\ketale^\eff(\pt_\N)$.  
As an application, we compute $\Z_p^\syn(i)$ for the projective log coordinate axes $D$ in $\P^2$, obtaining
\[
\Z_p^\syn(i)(D)
\simeq
\Z_p^\syn(i)(k,\N)\oplus \Z_p^\syn(i-1)(k,\N)[-2]
\]
Moreover, we determine logarithmic topological cyclic homology for truncated polynomial and semistable examples, directly from the syntomic calculations.
\end{abstract}
\maketitle

\section{Introduction}

Prismatic cohomology, introduced by Bhatt and Scholze \cite{BS22}, interpolates between crystalline and de~Rham cohomology in mixed characteristic.  
Its success rests on a synthesis of derived and $p$-adic geometry: the prismatic site encodes infinitesimal thickenings equipped with divided powers and Frobenius-compatible connections, while the resulting cohomology controls topological cyclic homology and syntomic invariants.  
When one passes from smooth schemes to logarithmic geometry, these structures must be enriched to capture the behavior of normal crossings and semistable degenerations.  
The purpose of this paper is to develop this extension and to study logarithmic syntomic cohomology in the logarithmic motivic stable homotopy category $\logSH(\pt_\N)$ of a log point.

\vspace{0.8em}

Our approach proceeds through the formalism of \emph{log animated rings} introduced in~\cite{BLPO2}, which provides a derived enhancement of the category of log rings suited to prismatic constructions.  
Just as animated rings model derived infinitesimal thickenings, log animated rings encode both the derived and combinatorial data of logarithmic structures, allowing us to work uniformly with derived divided powers and connections.  
This framework clarifies the role of divided power envelopes and their Nygaard filtrations: they become natural functors on log animated rings, equipped with canonical connections whose horizontality conditions reflect the $p$-adic Cartier isomorphism.  
In particular, logarithmic prismatic cohomology $\cPrism_{(A,M)}$ may be described as the derived global sections of the log prismatic sheaf on the category of log quasi-syntomic log rings $(A,M)$, with coefficients in $\TC^-$, and carries a natural Nygaard filtration defined by divided Frobenius powers.

\vspace{0.8em}

A central conceptual development of this paper is the introduction of the notion of \emph{log Cartier smoothness}.  
In the classical prismatic setting, the condition of Cartier smoothness \cite{MR4264079} for a $\Z_p$-algebra ensures that its cotangent complex has $p$-complete Tor amplitude in degree $0$, thereby allowing the construction of the Nygaard filtration and the comparison with derived de~Rham cohomology.  
In the logarithmic context, semistable and toroidal situations require a parallel condition that controls both the infinitesimal and combinatorial contributions of the log structure.  
We therefore define a log ring $(A,M)$ to be \emph{log Cartier smooth} if
the cotangent complex \(L_{(A,M)/\F_p}\) has Tor amplitude concentrated in degree \(0\), 
and for every \(i\), the inverse Cartier map
\(
C^{-1} \colon \Omega_{(A,M)/\F_p}^i \to H^i(\Omega_{(A,M)/\F_p}^\bullet)
\)
is an isomorphism.
This notion provides the natural setting for logarithmic prismatic geometry: it guarantees the existence of the logarithmic Cartier isomorphism, the good behavior of the Nygaard filtration, and the compatibility between divided powers and connections.  
All the main constructions in this paper---including saturated descent, comparison with derived log de~Rham cohomology, and motivic representability in $\logSH(\pt_\N)$---are carried out in the class of log Cartier smooth objects.

\vspace{0.8em}

A recurring theme in this work is the role of de~Rham cohomology as the classical limit of prismatic and syntomic theories.  
The derived log de~Rham complex governs infinitesimal thickenings equipped with connections, and its $p$-adic completion appears as the associated graded object of the Nygaard filtration on log prismatic cohomology.  
In our setting, this perspective is essential: once the log structure $M$ is free, the Nygaard-completed log prismatic cohomology of $(A,M)$ can be identified with the $p$-completed derived de~Rham cohomology of a suitable $\Z_p$-lift.  
Thus, de~Rham cohomology provides the bridge between divided powers and connections on one hand, and motivic syntomic phenomena on the other, revealing the geometric content of logarithmic prismatic cohomology.

\vspace{0.8em}

A key ingredient in our analysis is the technique of \emph{saturated descent}, introduced in~\cite{BLMP}.  
It asserts that the Nygaard-completed log prismatic cohomology of $(A,M)$ can be reconstructed from the prismatic cohomology of
animated rings, provided suitable exactness conditions hold.  
We show that this descent statement remains valid whenever the log structure $M$ is free, 
thereby simplifying the hypotheses of~\cite[Theorem~4.18]{BLMP}.  
This refinement enables a direct comparison between logarithmic prismatic cohomology and the $p$-completed derived log de~Rham cohomology of an appropriate lift to $\Z_p$, recovering the classical logarithmic de~Rham complex in characteristic~$p$ as the associated graded of the Nygaard filtration.

\vspace{0.8em}

The main object of study in this paper is the \emph{logarithmic syntomic complex} $\Z_p^\syn(i)(A,M)$, 
defined in terms of the Nygaard filtration on log prismatic cohomology by
\[
\Z_p^\syn(i)(A,M)
:=
\fib(
\Fil_{\rN}^{\geq i}\cPrism_{(A,M)}
\xrightarrow{\varphi_p-\mathrm{can}}
\cPrism_{(A,M)})
\]
This construction generalizes the syntomic complexes of~\cite{BMS19} to logarithmic geometry and yields a cohomology theory that encodes both Frobenius and connection data in the log setting.  
We prove that $\Z_p^\syn(i)$, which is functorial in log quasi-syntomic morphisms, 
satisfies saturated descent for free monoids, 
and is representable in the logarithmic motivic category $\logSH(\pt_\N)$.  
In particular, we show that
\[
\Z_p^\syn(i),
\quad
\cPrism,
\quad
\logTHH,
\quad
\logTC^-,
\quad
\logTP,
\quad
\text{and }
\logTC
\in
\logSH_\ketale^\eff(\pt_\N),
\]
so that all major logarithmic cohomology theories admit canonical motivic realizations.

\vspace{0.8em}

From the syntomic viewpoint, logarithmic topological cyclic homology is controlled by
$\Z_p^\syn(i)$.  
For every log ring $(A,M)$ there is a 
\emph{logarithmic trace map}
\[
\trc_{\log}\colon K(A,M) \longrightarrow \logTC(A,M)
\]
obtained by left Kan extension from the regular log regular case
\cite[Theorem 8.6.3]{BPO2} and \cite[Theorem F]{logSHF1}.
The map $\trc_{\log}$ provides a natural bridge between logarithmic $K$-theory and prismatic phenomena.  
However, we do not attempt to compute logarithmic $K$-theory itself in this work.  
Instead, we  
determine the homotopy of $\logTC$ in concrete cases, 
such as truncated polynomial and semistable examples, directly from syntomic calculations.

\vspace{0.8em}

As an explicit illustration, let $k$ be a perfect $\F_p$-algebra and consider the log ring $(k[x]/x^e,\N)$ with $1\mapsto x$.  
We compute
\[
\Z_p^\syn(i)(k[x]/x^e,\N)
\simeq
\begin{cases}
W(k) \oplus W(k)[-1] & i = 0 \\[2mm]
\W_{ei-1}(k)[-1] \oplus W(k)[-1] \oplus W(k)[-2] & i = 1 \\[2mm]
\W_{ei-1}(k)[-1] & i > 1 
\end{cases}
\]
where \(W(k)\) denotes the ring of Witt vectors and \(\W_j(k)\) denotes the ring of big Witt vectors \cite{MR3316757} truncated at \(\{1,\ldots,j\}\).
From which it follows that
\[
\pi_i(\logTC(k[x]/x^e,\N)_p^\wedge)
\cong
\begin{cases}
W(k) & i = -1 \\[2mm]
W(k)\oplus W(k) & i = 0 \\[2mm]
W(k)\oplus \W_{e-1}(k) & i = 1 \\[2mm]
\W_{em-1}(k) & i = 2m-1\ge 3 \\[2mm]
0 & \text{otherwise}
\end{cases}
\]
These formulas extend the classical truncated polynomial results of Hesselholt and Madsen \cite{zbMATH01058847} in the logarithmic setting, confirming that the syntomic theory faithfully detects the structure of logarithmic topological cyclic homology.

\vspace{0.8em}

In the second part of the paper we study the representability of these cohomology theories in the effective logarithmic motivic stable homotopy category $\logSH^\eff(\pt_\N)$.  
We show that the presheaves
\[
R\Gamma_\Zar(-,\wedge^i L_{-/k}),
\quad
\cPrism,
\quad
\Fil_\rN^{\ge i}\cPrism,
\quad
\text{and }
\Z_p^\syn(i)
\]
are log \'etale sheaves and $\square$-invariant on the category of log smooth fs log schemes over $\pt_\N = \Spec(k,\N)$ for a perfect $\F_p$-algebra $k$.  
Consequently, these theories are representable by canonical objects in $\logSH_\ketale^\eff(\pt_\N)$, extending the motivic representability of prismatic and syntomic theories established in~\cite{BMS19} and~\cite{BPO2}.  
The resulting logarithmic motivic spectra satisfy Bott periodicity; 
e.g., for logarithmic cyclic homology, we have
\[
\map_{\logSH(\pt_\N)}(\Sigma_{\P^1}^\infty X_+,\blogTC)
\simeq
\logTC(X), \qquad
\blogTC\simeq \Sigma^{2,1}\blogTC,
\]
mirroring the periodic behavior of algebraic $K$-theory in the motivic setting.

\vspace{0.8em}

Finally, we illustrate these constructions by an explicit computation for the projective log coordinate axes
\[
D = (\P^2,\ul{D}) \times_{\P^2} \ul{D}, \qquad
\ul{D} = \{[x:y:z] : x=0 \text{ or } y=0\},
\]
a basic example of a semistable fs log scheme.  
Using toric geometry and dividing covers, we show an explicit decomposition
\[
\Z_p^\syn(i)(D)
\simeq
\Z_p^\syn(i)(k,\N)
\oplus
\Z_p^\syn(i-1)(k,\N)[-2]
\]
This identifies the logarithmic syntomic cohomology of $D$ with successive extensions of the syntomic cohomology of the logarithmic point, and demonstrates concretely how divided powers, connections, and de~Rham-type structures are reflected in the motivic category.

\vspace{0.8em}

The results presented here show that logarithmic prismatic and syntomic cohomology admit a natural interpretation in terms of animated log geometry and motivic homotopy theory.  
In particular, the interplay between divided powers, connections, log Cartier smoothness, and de~Rham cohomology becomes an instance of the motivic relationship between prismatic and syntomic phenomena.

\subsection*{Acknowledgements}
This work was supported by the RCN Project no.~312472 Equations in Motivic Homotopy, the European Commission — Horizon-MSCA-PF-2022 Motivic integral $p$-adic cohomologies, and the DFG-funded research training group GRK 2240: Algebro-Geometric Methods in Algebra, Arithmetic and Topology.

\section{Animation of log pairs}

This section develops the categorical framework underlying the notion of \emph{animated log pairs}.  
It begins by reviewing the $1$-sifted ind-category $\sInd(\mathcal{C})$ and its universal property, relating it to the category $\mathcal{P}_{\Sigma,1}(\mathcal{C})$ of product-preserving presheaves.  
A general equivalence $\sInd(\cC)\simeq\cD$ is established when $\cC$ is a full subcategory of compact projective objects generating a cocomplete category $\cD$ under colimits.  
Several examples illustrate this framework, including the equivalences $\sInd(\FinSet)\simeq\Set$ and $\sInd(\Poly_R)\simeq\Ring_R$.  
Extending to logarithmic geometry, this section proves $\sInd(\lPoly_{(R,P)})\simeq\lRing_{(R,P)}$ for a log ring $(R,P)$ and analyzes the behavior of categories of pairs and log pairs with respect to $1$-sifted colimits.  
It culminates in a fully faithful embedding of $\lPair_{(R,P)}$ into $\sInd(\lPair_{(R,P)}^\st)$ and defines the $\infty$-category of \emph{animated log pairs}
\[
\lPair_{(R,P)}^\an := \cP_\Sigma(\lPair_{(R,P)}^\st)
\]
which provides the natural setting for derived and animated constructions in logarithmic geometry.
\vspace{0.1in}

For a category $\mathcal{C}$, let $\sInd(\mathcal{C})$ denote the $1$-sifted ind-category introduced in \cite[\S5.1.1]{MR4681144}.  
This is the full subcategory of presheaves of sets on $\mathcal{C}$ generated under $1$-sifted colimits (called sifted colimits in \cite{MR2720191}) by the Yoneda image of $\mathcal{C}$.  
By \cite[Theorem 2.1]{MR2720191}, a functor preserves $1$-sifted colimits if and only if it preserves filtered colimits and reflexive coequalizers, provided the source admits finite colimits.

If $\mathcal{C}$ has finite coproducts, consider the category $\mathcal{P}_{\Sigma,1}(\mathcal{C})$ from \cite[Notation A.10]{MR4851406}, consisting of functors $\mathcal{C}^{\op} \to \Set$ preserving finite products.  
By \cite[Proposition A.11(4)]{MR4851406} and \cite[Lemma 3.8]{MR2640203}, there is a natural equivalence
\[
\sInd(\mathcal{C}) \simeq \mathcal{P}_{\Sigma,1}(\mathcal{C})
\]
and each object of $\sInd(\mathcal{C})$ is a reflexive coequalizer of representable presheaves.  
Furthermore, \cite[Proposition A.14]{MR4851406} gives the following universal property.  
If $\mathcal{D}$ admits $1$-sifted colimits, then the functor
\begin{equation}\label{ani.1.2}
\Fun_\Sigma(\sInd(\mathcal{C}), \mathcal{D}) \xrightarrow{\simeq} \Fun(\mathcal{C}, \mathcal{D})
\end{equation}
induced by the Yoneda embedding $j\colon \mathcal{C} \hookrightarrow \sInd(\mathcal{C})$ is an equivalence, where $\Fun_\Sigma$ denotes the category of functors preserving $1$-sifted colimits.  
Moreover, a functor $g\colon \sInd(\mathcal{C}) \to \mathcal{D}$ preserves colimits if and only if $gj$ preserves finite coproducts.  
If $F\colon \mathcal{C} \to \mathcal{D}$ preserves finite coproducts, the induced functor $\sInd(\mathcal{C}) \to \mathcal{D}$ is left adjoint to $X \mapsto \Hom_{\mathcal{D}}(F(-),X)$ \cite[Lemma 2.1]{MR4851406}.

Recall from \cite[Definition 5.5.8.8]{HTT} that $\mathcal{P}_\Sigma(\mathcal{C})$ is the $\infty$-category of functors $\mathcal{C}^{\op} \to \Spc$ preserving finite products, where $\Spc$ denotes the $\infty$-category of spaces.  
Note that $\mathcal{P}_{\Sigma,1}(\mathcal{C})$ is a full subcategory of $\mathcal{P}_\Sigma(\mathcal{C})$.  
An object $X$ of $\mathcal{C}$ is called \emph{compact projective} if $\Hom_{\mathcal{C}}(X,-)$ preserves $1$-sifted colimits.

\begin{prop}\label{ani.1}
Let $\cD$ be a cocomplete category and $\cC \subset \cD$ a full subcategory closed under finite coproducts, whose objects are compact projective.  
If $\cC$ generates $\cD$ under colimits, then there is an equivalence
\[
\sInd(\cC) \simeq \cD
\]
\end{prop}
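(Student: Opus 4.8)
The plan is to identify $\sInd(\cC)$ with a category of product-preserving presheaves and then show this presheaf category is equivalent to $\cD$ via the restricted Yoneda functor. First I would use the equivalence $\sInd(\cC)\simeq\cP_{\Sigma,1}(\cC)$ recalled above, which reduces the claim to producing an equivalence $\cP_{\Sigma,1}(\cC)\simeq\cD$. The candidate functor in one direction is the restricted Yoneda embedding $h\colon\cD\to\PSh(\cC)$, $X\mapsto\Hom_\cD(-,X)|_\cC$; since $\cC$ is closed under finite coproducts in the cocomplete category $\cD$, each $\Hom_\cD(-,X)$ sends finite coproducts in $\cC$ to finite products of sets, so $h$ factors through $\cP_{\Sigma,1}(\cC)$. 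In the other direction, the inclusion $\cC\hookrightarrow\cD$ preserves finite coproducts, so by the universal property \eqref{ani.1.2} it extends uniquely to a functor $L\colon\sInd(\cC)\simeq\cP_{\Sigma,1}(\cC)\to\cD$ preserving $1$-sifted colimits; moreover, since $\cC\hookrightarrow\cD$ preserves finite coproducts, $L$ preserves all colimits, and by \cite[Lemma 2.1]{MR4851406} $L$ is left adjoint to $h$.

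Next I would check that the unit and counit of the adjunction $L\dashv h$ are isomorphisms. For the counit $L h(X)\to X$: every object of $\sInd(\cC)\simeq\cP_{\Sigma,1}(\cC)$ is a reflexive coequalizer (indeed a $1$-sifted colimit) of representables, and $h(X)$, being a product-preserving presheaf, is canonically the $1$-sifted colimit of the representables mapping to it; applying $L$ (which preserves such colimits and sends $j(C)$ to $C$) realizes $Lh(X)$ as the corresponding colimit in $\cD$, which is $X$ precisely because $\cC$ generates $\cD$ under colimits — here one uses that the compact projectivity of the objects of $\cC$ guarantees $\Hom_\cD(C,-)$ commutes with the relevant colimit, so the canonical map $\colim_{C\to X}C\to X$ is an isomorphism. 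For the unit $F\to hL(F)$: it suffices to check it on representables $F=j(C)$, where it becomes the identity $\Hom_\cC(-,C)\to\Hom_\cD(-,C)|_\cC$; since both $F\mapsto$ unit and the identity functor preserve $1$-sifted colimits and agree on representables (which generate), the unit is an isomorphism everywhere.

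The main obstacle I anticipate is the counit computation, specifically verifying that the canonical map $\colim_{(j(C)\to h(X))}C\to X$ in $\cD$ is an isomorphism. This is where the three hypotheses on $\cC$ must all be used simultaneously: closure under finite coproducts (so that the indexing diagram is $1$-sifted and $L$ behaves well on it), compact projectivity (so that $\Hom_\cD(C,-)$ detects the colimit, reducing the isomorphism to the set-level statement $\colim_{C'\to X}\Hom_\cD(C,C')\xrightarrow{\sim}\Hom_\cD(C,X)$ for all $C\in\cC$), and generation under colimits (to conclude the map is an isomorphism in $\cD$ rather than merely after applying each $\Hom_\cD(C,-)$). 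A clean way to organize this is to invoke \cite[Proposition A.14]{MR4851406} or the general recognition principle for $\cP_\Sigma$: the pair $(\cC\hookrightarrow\cD)$ satisfies exactly the hypotheses of that result at the level of $1$-categories, so the induced functor is an equivalence. I would present the argument in that streamlined form, citing the universal property and the compact-projective generation criterion, rather than rechecking the coequalizer presentations by hand.
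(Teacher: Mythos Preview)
Your proposal is correct and takes essentially the same approach as the paper: build the comparison functor $L=F\colon\sInd(\cC)\to\cD$ from the universal property \eqref{ani.1.2} and verify it is an equivalence using compact projectivity and generation under colimits. The paper is organized slightly more efficiently---it shows $F$ is fully faithful directly (extending the isomorphism $\Hom_{\sInd(\cC)}(X,Y)\cong\Hom_\cD(FX,FY)$ from $X,Y\in\cC$ to all $X,Y$ via compact projectivity on both sides), then obtains essential surjectivity for free since the essential image of a fully faithful colimit-preserving functor from a cocomplete source is closed under colimits and contains $\cC$; this sidesteps the counit/conservativity issue you correctly flagged as the delicate point in your adjunction-based formulation.
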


\begin{proof}
We follow \cite[Footnotes 17, 19]{MR4681144}.  
Let $F\colon \sInd(\cC) \to \cD$ be the functor from \eqref{ani.1.2}.  
It suffices to show that $F$ is fully faithful, i.e.
\begin{equation}\label{ani.1.1}
\Hom_{\sInd(\cC)}(X,Y) \cong \Hom_{\cD}(F(X),F(Y))
\end{equation}
for all $X,Y \in \sInd(\cC)$.  
The isomorphism holds when $X,Y \in \cC$; since every object of $\cC$ is compact and projective, it extends to all $X,Y$.

Because $\sInd(\cC)$ is equivalent to the category of functors $\cC^{\op}\to\Set$ preserving finite coproducts, it admits finite coproducts, and thus coproducts via filtered colimits.  
Every colimit is a reflexive coequalizer of coproducts, so $\sInd(\cC)$ admits all colimits.  
Having shown $F$ is fully faithful, the result follows.
\end{proof}

\begin{exm}\label{ani.4}
The full subcategory of compact projective objects in $\Set$ is $\FinSet$, the category of finite sets.  
By Proposition~\ref{ani.1},
\[
\sInd(\FinSet) \simeq \Set
\]
\end{exm}

\begin{exm}\label{ani.2}
For $n\ge1$, let $\Set^n$ (resp.\ $\FinSet^n$) denote the Cartesian product of $n$ copies of $\Set$ (resp.\ $\FinSet$).  
If $\cD$ admits colimits and
\[
F\colon \Set^n \rightleftarrows \cD : U
\]
is an adjoint pair with $U$ conservative and preserving $1$-sifted colimits, then by \cite[Proof of Cor.~4.7.3.18]{HA}, the subcategory $\cC := F(\FinSet^n)$ satisfies Proposition~\ref{ani.1}, giving
\[
\sInd(\cC) \simeq \cD
\]
\end{exm}

\begin{exm}
Let $R$ be a ring.  
By \cite[Construction 2.1]{BMS19},
\[
\sInd(\Poly_R) \simeq \Ring_R
\]
where $\Poly_R$ is the category of finitely generated polynomial $R$-algebras.  
This corresponds to Example~\ref{ani.2} with $\cD=\Ring_R$ and $n=1$.  
The forgetful functor $\Ring_R\to\Set$ preserves $1$-sifted colimits \cite[Example 5.1.3]{MR4681144}.
\end{exm}

\begin{exm}\label{ani.3}
Let $P$ be a monoid, and let $\Mon_P$ denote the category of monoid homomorphisms $P\to M$.  
The conservative forgetful functor $\Mon_P\to\Set$ has a left adjoint
\[
\Set \to \Mon_P, \quad S\mapsto P\oplus \N^S
\]
It preserves reflexive coequalizers.  
Given
\[
\begin{tikzcd}
M \ar[r,"\eta"] & N \ar[r,shift left=0.75ex,"\theta_1"] \ar[r,shift right=0.75ex,"\theta_2"'] & M
\end{tikzcd}
\]
the coequalizer of $\theta_1,\theta_2$ is $M$ modulo the congruence generated by $\theta_1(y)\sim\theta_2(y)$ for all $y\in N$.  
As congruences satisfy $a+c\sim b+c$ whenever $a\sim b$, this relation is a congruence (see \cite[\S I.1.1]{Ogu}).  
Hence $\Mon_P\to\Set$ preserves reflexive coequalizers and therefore $1$-sifted colimits.
\end{exm}

\begin{exm}\label{ani.5}
Let $(R,P)$ be a log ring, i.e.\ a ring $R$ with a monoid $P$ and a homomorphism $P\to R$.  
Let $\lRing_{(R,P)}$ denote the category of log $(R,P)$-algebras.  
The forgetful functor $\lRing_{(R,P)}\to\Set^2$, $(A,M)\mapsto(A,M)$, has a left adjoint
\[
\Set^2 \to \lRing_{(R,P)}, \quad (S,T)\mapsto (R[\N^S\oplus \N^T], P\oplus\N^T)
\]
Let $\lPoly_{(R,P)}$ be the essential image of $\FinSet^2$ under this functor.  
The forgetful functor $\lRing_{(R,P)}\to\Ring_R\times\Mon_P$ is conservative and preserves $1$-sifted colimits (by \cite[Example 5.1.3]{MR4681144} and Example~\ref{ani.3}).  
Thus the same holds for $\lRing_{(R,P)}\to\Set^2$, and Example~\ref{ani.2} gives:
\end{exm}

\begin{prop}\label{ani.6}
For any log ring $(R,P)$,
\[
\sInd(\lPoly_{(R,P)}) \simeq \lRing_{(R,P)}
\]
\end{prop}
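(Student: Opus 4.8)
The plan is to deduce Proposition~\ref{ani.6} as a direct instance of the machinery assembled in Examples~\ref{ani.2} and~\ref{ani.5}. First I would record that $\lPoly_{(R,P)}$ is by definition the essential image $F(\FinSet^2)$ of the free-algebra functor $F\colon \Set^2 \to \lRing_{(R,P)}$, $(S,T)\mapsto (R[\N^S\oplus\N^T],P\oplus\N^T)$, which is left adjoint to the forgetful functor $U\colon \lRing_{(R,P)}\to\Set^2$. So the statement is exactly the conclusion of Example~\ref{ani.2} applied with $n=2$, $\cD=\lRing_{(R,P)}$, and this adjoint pair $(F,U)$, \emph{provided} we verify the two hypotheses of that example: that $\cD=\lRing_{(R,P)}$ is cocomplete, and that $U$ is conservative and preserves $1$-sifted colimits.

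The second, third, and fourth steps address these hypotheses in turn. Conservativity of $U\colon\lRing_{(R,P)}\to\Set^2$ is immediate, since a morphism of log $(R,P)$-algebras that is bijective on underlying rings and underlying monoids is an isomorphism; more precisely one factors $U$ through $\lRing_{(R,P)}\to\Ring_R\times\Mon_P\to\Set^2$, each factor being conservative. For preservation of $1$-sifted colimits, Example~\ref{ani.5} already records that $\lRing_{(R,P)}\to\Ring_R\times\Mon_P$ is conservative and preserves $1$-sifted colimits; combined with the fact that $\Ring_R\to\Set$ preserves $1$-sifted colimits (\cite[Example 5.1.3]{MR4681144}) and that $\Mon_P\to\Set$ does too (Example~\ref{ani.3}), composition gives that $U$ preserves $1$-sifted colimits. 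Cocompleteness of $\lRing_{(R,P)}$ follows because it is the category of algebras for a monad on the cocomplete category $\Set^2$ whose underlying functor preserves (in particular) filtered colimits and reflexive coequalizers: by the monadicity/Barr--Beck package (as cited via \cite[Proof of Cor.~4.7.3.18]{HA} in Example~\ref{ani.2}) such a category of algebras admits all colimits. Then Proposition~\ref{ani.1}, via Example~\ref{ani.2}, delivers $\sInd(F(\FinSet^2))\simeq\lRing_{(R,P)}$, i.e.\ $\sInd(\lPoly_{(R,P)})\simeq\lRing_{(R,P)}$.

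Concretely, the proof body reads: \emph{By Example~\ref{ani.5}, the forgetful functor $U\colon\lRing_{(R,P)}\to\Set^2$ is conservative and preserves $1$-sifted colimits, and it admits the left adjoint $F\colon(S,T)\mapsto(R[\N^S\oplus\N^T],P\oplus\N^T)$ whose restriction to $\FinSet^2$ has essential image $\lPoly_{(R,P)}$. Moreover $\lRing_{(R,P)}$ is cocomplete, being monadic over $\Set^2$. Hence the hypotheses of Example~\ref{ani.2} are satisfied with $n=2$, $\cD=\lRing_{(R,P)}$, and $\cC=\lPoly_{(R,P)}=F(\FinSet^2)$, and we conclude $\sInd(\lPoly_{(R,P)})\simeq\lRing_{(R,P)}$.} Nothing beyond this is needed; the work has been front-loaded into the examples.

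The only point requiring genuine care — the main obstacle, such as it is — is the verification that $\lRing_{(R,P)}$ is cocomplete, or equivalently that $U$ is monadic with a sufficiently well-behaved underlying functor. The subtlety is that colimits of log rings are \emph{not} computed componentwise on rings and monoids: the monoid part of a coproduct or coequalizer interacts with the structure map $P\to R$ (one must form the appropriate pushout of log structures, not merely of monoids), so one cannot simply invoke cocompleteness of $\Ring_R\times\Mon_P$. The clean way around this is to present $\lRing_{(R,P)}$ as the category of algebras for the monad $UF$ on $\Set^2$ and apply the criterion that a monad on a cocomplete category, whose underlying endofunctor preserves filtered colimits and reflexive coequalizers, has cocomplete Eilenberg--Moore category — this is precisely the input used in Example~\ref{ani.2} and is guaranteed here since $U$ preserves $1$-sifted colimits (hence both filtered colimits and reflexive coequalizers) and has a left adjoint. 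I would flag this once and then let the citation to \cite[Proof of Cor.~4.7.3.18]{HA} carry the weight, exactly as Example~\ref{ani.2} already does.
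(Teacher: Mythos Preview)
Your proposal is correct and follows essentially the same route as the paper: Example~\ref{ani.5} already verifies that the forgetful functor $\lRing_{(R,P)}\to\Set^2$ is conservative and preserves $1$-sifted colimits, and the proposition is then stated as an immediate consequence of Example~\ref{ani.2}. Your additional care in explicitly justifying cocompleteness of $\lRing_{(R,P)}$ via monadicity is a point the paper leaves implicit, but otherwise the arguments coincide.
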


\begin{exm}\label{ani.7}
Let $R$ be a ring, and let $\Pair_R$ denote the category of pairs $(A,I)$ with $A$ an $R$-algebra and $I\subset A$ an ideal.  
The forgetful functor $\Pair_R\to\Set^2$, $(A,I)\mapsto(A,I)$, has a left adjoint
\[
\Set^2\to\Pair_R, \quad (S,T)\mapsto(\R[\N^S\oplus\N^T],(T))
\]
where $(T)$ is the ideal generated by the image of $T$.  
Let $\Pair_R^\st$ be the essential image of $\FinSet^2$.  
Mao \cite[Remark 3.14]{MR4851406} showed that
\[
\sInd(\Pair_R^\st)\not\simeq\Pair_R
\]
so $\Pair_R\to\Set^2$ does not preserve $1$-sifted colimits.  
However, \cite[Lemma 3.12]{MR4851406} implies $\Pair_R$ is a full subcategory of $\sInd(\Pair_R^\st)$.  
Thus we may view $\Pair_R$ as a full subcategory of the $\infty$-category of \emph{animated pairs}
\[
\Pair_R^\an := \cP_\Sigma(\Pair_R^\st)
\]
\end{exm}

\begin{exm}\label{ani.8}
Let $(R,P)$ be a log ring, and $\lPair_{(R,P)}$ the category of triples $(A,M,I)$ where $(A,M)$ is a log $(R,P)$-algebra and $I\subset A$ an ideal.  
The forgetful functor $\lPair_R\to\Set^3$, $(A,M,I)\mapsto(A,M,I)$, has a left adjoint
\[
\Set^3\to\lPair_{(R,P)}, \quad (S,T,U)\mapsto (R[\N^S\oplus\N^T\oplus\N^U], P\oplus\N^T,(\N^U))
\]
Let $\lPair_{(R,P)}^\st$ be the essential image of $\FinSet^3$.  
As in Example~\ref{ani.7}, the forgetful functor $\lPair_{(R,P)}\to\Set^3$ does not preserve $1$-sifted colimits, but:
\end{exm}

\begin{prop}\label{ani.9}
For any log ring $(R,P)$, the functor
\[
\lPair_{(R,P)} \to \sInd(\lPair_{(R,P)}^\st), \quad (A,M,I)\mapsto \Hom_{\lPair_{(R,P)}}(-,(A,M,I))|_{\lPair_{(R,P)}^\st}
\]
is fully faithful.
\end{prop}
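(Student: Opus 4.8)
The plan is to mimic the proof of \cite[Lemma 3.12]{MR4851406} (the analogous statement for plain pairs, quoted in Example~\ref{ani.7}), adapting it to the presence of the extra monoid coordinate. Write $\cC := \lPair_{(R,P)}^\st$ and let $j\colon \cC \hookrightarrow \sInd(\cC)$ be the Yoneda embedding. The functor in the statement is the composite of the Yoneda embedding $\lPair_{(R,P)} \hookrightarrow \PSh(\cC)$ with the observation that it lands in $\sInd(\cC) = \cP_{\Sigma,1}(\cC)$ — indeed $\Hom_{\lPair_{(R,P)}}(-,(A,M,I))$ takes finite coproducts in $\cC$ to finite products of sets, since finite coproducts in $\cC$ are computed as in $\lPair_{(R,P)}$ (the free objects $(R[\N^S\oplus\N^T\oplus\N^U], P\oplus\N^T,(\N^U))$ are closed under finite coproducts, with the $P$-factor shared). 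So the first step is just to check this product-preservation, which is routine from the explicit left adjoint in Example~\ref{ani.8}.

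The substance is full faithfulness. First I would record that every object $(A,M,I) \in \lPair_{(R,P)}$ admits a canonical presentation as a reflexive coequalizer
\[
\begin{tikzcd}
(A_1,M_1,I_1) \ar[r,shift left=0.75ex] \ar[r,shift right=0.75ex] & (A_0,M_0,I_0) \ar[r] & (A,M,I)
\end{tikzcd}
\]
with $(A_0,M_0,I_0)$ and $(A_1,M_1,I_1)$ free on sets, i.e.\ filtered colimits of objects of $\cC$. Concretely, $(A_0,M_0,I_0)$ is the free log pair on the underlying triple of sets $(A,M,I)$ and the two maps are the counit and the ``free on the free'' comparison, exactly as in the monadic bar resolution; reflexivity comes from the unit. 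For this one uses that $\lRing_{(R,P)} \to \Set^2$ is monadic (Example~\ref{ani.5}) together with the ideal coordinate, which behaves as in \cite[\S3]{MR4851406}. Now for $X,Y \in \lPair_{(R,P)}$ with $X$ presented as above, both $\Hom_{\lPair_{(R,P)}}(X,Y)$ and $\Hom_{\sInd(\cC)}(FX,FY)$ are computed as the equalizer of $\Hom(\text{free}_0,Y) \rightrightarrows \Hom(\text{free}_1,Y)$: on the left this is the definition of colimit in $\lPair_{(R,P)}$, on the right it follows because $F$ sends the presentation to a reflexive-coequalizer presentation in $\sInd(\cC)$ (which has all colimits, by the argument in Proposition~\ref{ani.1}) and $\sInd(\cC)(-,FY)$ sends it to an equalizer. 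So it reduces to the case where $X$ is free on a set. Then $X$ is a filtered colimit of objects of $\cC$, and since objects of $\cC$ are compact projective in $\sInd(\cC)$ and (one checks) the Hom out of a finite free object agrees on both sides essentially by the adjunction defining the free functor, the comparison map is an isomorphism; passing to the filtered colimit finishes it. This last compatibility — that $\Hom_{\lPair_{(R,P)}}(\text{free on }\FinSet^3, Y) \cong \Hom_{\sInd(\cC)}(j(\text{that free object}), FY)$ — holds because $F j \simeq $ the inclusion $\cC \hookrightarrow \lPair_{(R,P)}$ followed by Yoneda, i.e.\ $F$ restricted to $\cC$ is the given inclusion, so on free objects both sides are $\Hom_{\Set^3}$ of the generating finite sets into the underlying triple of $Y$.

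The main obstacle I anticipate is the verification that $\lPair_{(R,P)}$ genuinely has the reflexive-coequalizer presentations by free objects and that colimits of these presentations are preserved correctly — i.e.\ the monadicity bookkeeping over $\Set^3$ with the ideal coordinate entangled with the ring coordinate. Unlike the ring or monoid coordinates, the ideal $I \subset A$ is not the underlying set of an algebra for a finitary monad on $\Set$ in an independent way: the left adjoint $(S,T,U) \mapsto (R[\N^S\oplus\N^T\oplus\N^U], P\oplus\N^T,(\N^U))$ couples $U$ to the ring, so one must argue as in \cite[Lemma 3.12]{MR4851406} that the relevant forgetful functor, while not preserving all $1$-sifted colimits, still preserves enough (namely filtered colimits, and the particular reflexive coequalizers arising in bar resolutions) for the equalizer computation of Hom-sets to go through. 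Once that is in place — essentially by citing and lightly extending \cite[Lemmas 3.8, 3.12]{MR4851406} across the additional monoid factor, which is harmless because $\Mon_P \to \Set$ does preserve $1$-sifted colimits by Example~\ref{ani.3} — the full faithfulness follows formally.
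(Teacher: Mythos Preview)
Your approach is correct, and the key point you flag --- that the forgetful functor $\lPair_{(R,P)}\to\Set^3$ preserves the particular reflexive coequalizers arising in bar resolutions --- holds for the standard reason: after applying the forgetful functor, the bar resolution of $X$ acquires an extra degeneracy via the unit $\eta$ and becomes a \emph{split} coequalizer in $\Set^3$, hence is preserved by everything. Combined with the fact that objects of $\cC$ are compact in $\lPair_{(R,P)}$ (since $\Hom_{\lPair}(c,-)$ factors through $U$, which preserves filtered colimits, followed by a finite product), your reduction goes through.

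The paper takes a genuinely different and shorter route. Rather than presenting an arbitrary $X$ by a bar resolution and computing both Hom-sets as the same equalizer, it argues directly on the target: given a natural transformation between the restricted Hom-presheaves of $(A,M,I)$ and $(A',M',I')$, it restricts along the three subcategories $\FinSet^3\hookrightarrow\lPair^\st_{(R,P)}$, $\lPoly_{(R,P)}\hookrightarrow\lPair^\st_{(R,P)}$, and $\Pair^\st_R\hookrightarrow\lPair^\st_{(R,P)}$, and reads off respectively a map of underlying triples in $\Set^3$ (via Example~\ref{ani.4}), a map of log $(R,P)$-algebras (via Proposition~\ref{ani.6}), and a map of pairs (via \cite[Lemma~3.12]{MR4851406}); these agree on overlaps and assemble to a morphism in $\lPair_{(R,P)}$. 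So the paper's proof is a gluing-from-known-cases argument that leverages the results already established for $\lRing$ and $\Pair$, while yours is a uniform monadic argument that in effect re-derives those cases along the way. Your approach has the advantage of being self-contained and making transparent exactly which colimits are needed; the paper's has the advantage of being shorter and modular, reducing immediately to the non-log pair case rather than reworking it.
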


\begin{proof}
Let $(A,M,I),(A',M',I')\in\lPair_{(R,P)}$.  
A natural transformation
\[
F\colon \Hom_{\lPair_{(R,P)}}(-,(A,M,I))|_{\lPair_{(R,P)}^\st} \to \Hom_{\lPair_{(R,P)}}(-,(A',M',I'))|_{\lPair_{(R,P)}^\st}
\]
arises from a morphism $(A,M,I)\to(A',M',I')$.  
Precomposing with $(\FinSet^3)^\op\to(\lPair_{(R,P)}^\st)^\op$ gives a transformation
\[
\Hom_{\Set^3}(-,(A,M,I))|_{\FinSet^3} \to \Hom_{\Set^3}(-,(A',M',I'))|_{\FinSet^3}
\]
which corresponds to some $f\colon (A,M,I)\to(A',M',I')$ in $\Set^3$.  
Composing with $(\lPoly_{(R,P)})^\op\to(\lPair_{(R,P)}^\st)^\op$ yields a map of log algebras $g\colon(A,M)\to(A',M')$ (Proposition~\ref{ani.6}), compatible with the $\Set^3$ part of $f$.  
Similarly, precomposition with $(\Pair_{(R,P)}^\st)^\op$ gives $h\colon(A,I)\to(A',I')$ (\cite[Lemma 3.12]{MR4851406}).  
These agree on overlaps, hence $f$ defines a morphism in $\lPair_{(R,P)}$.
\end{proof}

\begin{df}\label{ani.10}
For a log ring $(R,P)$, set
\[
\lPair_{(R,P)}^\an := \cP_\Sigma(\lPair_{(R,P)}^\st)
\]
By Proposition~\ref{ani.9}, $\lPair_{(R,P)}^\an$ contains $\lPair_{(R,P)}$ as a full subcategory.
\end{df}

\section{Saturated Descent for the Free Monoid Case}
\label{section:Saturated Descent for the Free Monoid Case}

This section extends the saturated descent method of \cite[Theorem~4.18]{BLMP} to the case where the log structure \(M\) is free. 
After introducing the colimit perfection \(M_\perf\) of a saturated monoid and its variants \(M_{\perf(n)}\), it is shown that for a log \(R\)-algebra \((A,M)\) with free \(M\) and bounded \(p^\infty\)-torsion, the logarithmic cotangent complex admits a descent formula analogous to that for rings. 
A key step establishes that, under freeness, the quasi-syntomic and log quasi-syntomic conditions on \(A\) and \((A,M)\) coincide. 
Using this, the log prismatic cohomology \(\cPrism_{(A,M)}\) is expressed as the limit of prismatic cohomologies of the system \(\{A \widehat{\otimes}_{\Z_p\langle M\rangle} \Z_p\langle M_{\perf(n)}\rangle\}_n\). 
This equivalence implies that the log Nygaard filtration and syntomic cohomology satisfy the same descent properties. 
Finally, when \(A\) is a \(p\)-torsion-free quasi-syntomic \(\delta\)-ring admitting a map \(\Z_p\langle M\rangle \to A\), a natural equivalence
\[
\cPrism_{(A/p,M)} \simeq (L\Omega_{(A,M)/\Z_p})_p^\wedge
\]
relates log prismatic and derived de Rham cohomology. 
Together, these results provide a simplified and more general proof of saturated descent for log rings with free monoid structures.
\vspace{0.1in}

Let \((A,M)\) be a log quasi-syntomic log ring in the sense of \cite[\S 4.4]{BLPO2} or \cite[Definition 3.2]{2306.00364}, so that \(A\) has bounded \(p^\infty\)-torsion, \(M\) is integral, and \(L_{(A,M)/\Z_p}\) has \(p\)-complete Tor amplitude in \([-1,0]\). 
By \cite[\S 7.2]{BLPO}, the log prismatic cohomology \(\cPrism_{(A,M)}\) is the global section of the log quasi-syntomic sheaf \cite[Definition 4.5]{BLPO} on the category of log quasi-syntomic log \((A,M)\)-algebras, whose value at a log quasi-regular semiperfectoid ring \cite[Definition 4.7]{BLPO} \(S\) is \(\pi_0 \TC^-(S)_p^\wedge\).

\begin{df}
\label{sat.1}
For a saturated monoid \(M\), the \emph{colimit perfection of \(M\)} is defined as
\[
M_\perf := \colim(M \xrightarrow{p} M \xrightarrow{p} \cdots)
\]
For any integer \(n \geq 0\), set
\[
M_{\perf(n)} := M_\perf \oplus_M^\sat \cdots \oplus_M^\sat M_\perf
\text{ (with \(n+1\) copies of \(M_\perf\))}
\]
Here \(\oplus^\sat\) denotes the coproduct in the category of saturated monoids. 
From \cite[Lemma 4.4]{BLMP}, for each integer \(n \geq 1\) there is a natural isomorphism of monoids
\begin{equation}
M_{\perf(n)} \cong M_\perf \oplus (M_\perf^\gp / M^\gp)^{\oplus (n-1)}
\end{equation}
\end{df}

Let \(\Z_p\langle M\rangle\) denote the \(p\)-completion of \(\Z_p[M]\).

\begin{prop}
\label{sat.2}
Let \((A,M)\) be a log \(R\)-algebra, where \(R\) is a ring. 
If \(A\) and \(R\) have bounded \(p^\infty\)-torsion and \(M\) is free, there is a natural equivalence of complexes
\[
(\wedge^i L_{(A,M)/R})_p^\wedge
\to
\lim_{n\in \Delta^\op}
(\wedge^i L_{A\widehat{\otimes}_{\Z_p\langle M\rangle} \Z_p\langle M_{\perf(n)}\rangle /R})_p^\wedge
\]
for every integer \(i \geq 0\).
\end{prop}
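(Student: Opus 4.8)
The plan is to imitate, through the logarithmic cotangent–complex transitivity sequence, the ring-theoretic descent: we split the logarithmic cotangent complex into the base change of $L_{(A,M)/R}$ and a ``relative monoid'' term, prove descent for the first by $p$-completely faithfully flat methods (here freeness of $M$ does all the work), and show the second is annihilated by $\lim_\Delta$. Concretely, since only $p$-completed cotangent complexes occur we may replace $A$ by $\hat A:=A_p^\wedge$ and write $\hat A_n:=\hat A\widehat\otimes_{\Z_p\langle M\rangle}\Z_p\langle M_{\perf(n)}\rangle$. Because $M$ is free, the structure homomorphism $M\to A$ amounts to a map $\Z_p\langle M\rangle\to\hat A$ along which the pre-log structure on $\hat A$ is pulled back; hence $(\Z_p\langle M\rangle,M)\to(\hat A,M)$ is strict and $(A_n,M_{\perf(n)})$ is, in log rings, the pushout of $(\Z_p\langle M\rangle,M)\to(\Z_p\langle M_{\perf(n)}\rangle,M_{\perf(n)})$ along $\Z_p\langle M\rangle\to\hat A$. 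The transitivity triangle for $R\to(A,M)\to(A_n,M_{\perf(n)})$, together with base change of the log cotangent complex along this pushout square, gives a cofiber sequence of connective $\hat A_n$-modules, natural and cosimplicial in $[n]$, with terms $\hat A_n\widehat\otimes_{\hat A}L_{(A,M)/R}$, then $L_{(A_n,M_{\perf(n)})/R}$, then $\hat A_n\widehat\otimes_{\Z_p\langle M_{\perf(n)}\rangle}L_{(\Z_p\langle M_{\perf(n)}\rangle,M_{\perf(n)})/(\Z_p\langle M\rangle,M)}$. Taking derived exterior powers yields a finite Koszul filtration on $\wedge^iL_{(A_n,M_{\perf(n)})/R}$ whose graded pieces are $\wedge^j(\text{first term})\otimes\wedge^{i-j}(\text{third term})$, $0\le j\le i$; since $p$-completion preserves cofiber sequences and $\lim_\Delta$ commutes with finite limits, the statement follows from (a) $\lim_\Delta$ of the $j=i$ piece equals $(\wedge^iL_{(A,M)/R})_p^\wedge$, and (b) $\lim_\Delta$ of each $j<i$ piece vanishes.

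\emph{The $j=i$ piece and ring descent.} As exterior powers commute with base change, the $j=i$ piece is $(\hat A_\bullet\widehat\otimes_{\hat A}\wedge^iL_{(A,M)/R})_p^\wedge$, so (a) asserts that $\hat A\to\hat A_\bullet$ is of effective descent after $p$-completion. Freeness enters here: $\Z_p\langle M\rangle\to\Z_p\langle M_\perf\rangle$ is the $p$-completed filtered colimit of the finite free extensions $\Z_p\langle M\rangle\to\Z_p\langle p^{-k}M\rangle$, hence $p$-completely faithfully flat; and by Definition~\ref{sat.1} each $\Z_p\langle M_{\perf(n)}\rangle$ equals $\Z_p\langle M_\perf\rangle\widehat\otimes_{\Z_p}\Z_p\langle(M_\perf^\gp/M^\gp)^{\oplus(n-1)}\rangle$, whose second tensor factor is the $p$-completed group algebra of a finite product of copies of $M_\perf^\gp/M^\gp$, a filtered colimit of cyclotomic extensions and hence a quasisyntomic $\Z_p$-algebra. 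So each $\hat A\to\hat A_n$ is a quasisyntomic cover, and (a) follows from quasisyntomic descent for $p$-completed exterior powers of cotangent complexes, \emph{provided} $\hat A_\bullet$ is a hypercover of $\hat A$.

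\emph{The relative term and the main obstacle.} The third term is governed by the group $M_{\perf(n)}^\gp/M^\gp$, which is $p$-divisible since $M_\perf^\gp=M^\gp[1/p]$ is; hence its $p$-completion is ``small'', built from $p$-power-torsion data, in a way that propagates to all exterior powers, and (b) reduces to knowing that this relative term has vanishing $\lim_\Delta$ — equivalently, that $(A,M)\to(A_\bullet,M_{\perf(\bullet)})$ is a hypercover for the theory. This is the crux, and it is exactly where the \emph{saturated} nature of the coproducts defining $M_{\perf(\bullet)}$ must be confronted: $\Z_p\langle M_{\perf(\bullet)}\rangle$ is \emph{not} the \v{C}ech nerve of $\Z_p\langle M\rangle\to\Z_p\langle M_\perf\rangle$ — they differ by adjunction of the $p$-power roots of unity arising from torsion in the group completions — so the hypercover property cannot be obtained formally. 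The decisive input is the explicit isomorphism $M_{\perf(n)}\cong M_\perf\oplus(M_\perf^\gp/M^\gp)^{\oplus(n-1)}$ of Definition~\ref{sat.1} (i.e.\ \cite[Lemma~4.4]{BLMP}): it renders the cosimplicial monoid $M_{\perf(\bullet)}^\gp$ — the \v{C}ech conerve of $M^\gp\hookrightarrow M^\gp[1/p]$ — fully explicit, identifies the matching objects of $\Z_p\langle M_{\perf(\bullet)}\rangle$, and reduces both the hypercover property and the vanishing of $\lim_\Delta$ of the relative term to a direct computation with $p$-divisible groups. Granting this, the $j<i$ graded pieces are killed by $\lim_\Delta$ by combining the vanishing of the relative term with the uniform Tor-amplitude bounds, and reassembling the Koszul filtration yields the asserted equivalence.
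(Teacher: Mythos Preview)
The paper's proof is a single-line citation to \cite[Theorem~4.12]{BLMP}, together with the observation that freeness of $M$ makes $\Z[M]\to\Z[M_\perf]$ flat, so that $A\widehat\otimes^L_{\Z_p\langle M\rangle}\Z_p\langle M_\perf\rangle$ is discrete --- this discreteness is the only hypothesis of the cited theorem that needs verification here. Your proposal is, in effect, a sketch of how one would prove that cited theorem in this special case, and the architecture you lay out (transitivity triangle, Koszul filtration on exterior powers, descent for the base-change piece, vanishing of the relative piece) is the correct shape.

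There is nonetheless a genuine gap. The step you flag with ``Granting this'' is the entire substance of the result: you correctly observe that $\Z_p\langle M_{\perf(\bullet)}\rangle$ is \emph{not} the \v{C}ech nerve of $\Z_p\langle M\rangle\to\Z_p\langle M_\perf\rangle$ (because of the saturated coproduct), and you correctly point to the explicit description $M_{\perf(n)}\cong M_\perf\oplus(M_\perf^\gp/M^\gp)^{\oplus(n-1)}$ as the key structural input, but you do not carry out the computation that converts this description into the hypercover property needed for (a) or the vanishing statement needed for (b). Without that computation neither (a) nor (b) is established, and the argument does not close. A secondary issue: your transitivity triangle has the \emph{log} cotangent complex $L_{(A_n,M_{\perf(n)})/R}$ as its middle term, whereas the proposition concerns the ordinary $L_{A_n/R}$; identifying the two totalizations after $p$-completion requires a further step (the cotangent-complex analogue of \cite[Proposition~4.14]{BLMP}, used in the proof of Proposition~\ref{sat.3}) which you do not supply.
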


\begin{proof}
This is a special case of \cite[Theorem 4.12]{BLMP}. 
Since \(M\) is free, \(\Z[M] \to \Z[M_\perf]\) is flat, implying that \(A \widehat{\otimes}_{\Z_p\langle M\rangle}^L \Z_p\langle M_\perf\rangle\) is discrete.
\end{proof}

We next introduce notation for filtrations. 
Let \(\cC\) be an \(\infty\)-category and \(X \in \cC\). 
A \emph{filtration on \(X\)} is a functor \(\Fil^{\geq \bullet} X \colon \Z^\op \to \cC\) equipped with a morphism \(\colim_n \Fil^{\geq n}X \to X\), where \(\Z^\op\) has objects \(\Z\) and morphisms \(\Hom_{\Z^\op}(m,n)=\ast\) if \(m \geq n\) and is empty otherwise. 
The filtration \(\Fil^{\geq \bullet} X\) is \emph{complete} (resp.\ \emph{exhaustive}) if \(\lim_n \Fil^{\geq n}X \simeq 0\) (resp.\ if \(\colim_n \Fil^{\geq n}X \to X\) is an equivalence).

\begin{const}
\label{sat.6}
Let \(R\) be a ring and \((R,0)\) the log ring with trivial monoid. 
For \((A,M) \in \lPoly_{(R,0)}\), let \(\Omega_{(A,M)/R}^\bullet\) be the filtered \(\E_\infty\)-ring defined by the Hodge filtration \(\Omega_{(A,M)/R}^{\geq n}\). 
For a log \(R\)-algebra \((A,M)\), the filtered \(\E_\infty\)-ring \(L\Omega_{(A,M)/R}\) obtained by left Kan extension carries the induced \emph{Hodge filtration}.
\end{const}

\begin{const}
\label{sat.7}
Let \((A,M)\) be a log \(\Z_p\)-algebra. 
In Theorem \ref{sat.8} we also consider the filtration on \(L\Omega_{(A,M)/\Z_p}\) given by the tensor product of the Hodge filtration and the \(p\)-adic filtration on \(\Z_p\).
\end{const}

\begin{prop}
\label{sat.5}
Let \((A,M)\) be a log \(\Z_p\)-algebra with \(M\) free. 
Then \(A\) is quasi-syntomic if and only if \((A,M)\) is log quasi-syntomic.
\end{prop}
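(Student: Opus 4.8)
The plan is to strip off the monoid by means of two transitivity triangles whose ``error terms'' are free modules concentrated in degree~$0$. To begin with, recall that both ``$A$ is quasi-syntomic'' and ``$(A,M)$ is log quasi-syntomic'' require that $A$ have bounded $p^\infty$-torsion, while the log condition additionally asks that $M$ be integral---automatic here, since free monoids are integral---and that $L_{(A,M)/\Z_p}$ have $p$-complete Tor-amplitude in $[-1,0]$. Since $A$ is quasi-syntomic precisely when $A$ has bounded $p^\infty$-torsion and $L_{A/\Z_p}$ has $p$-complete Tor-amplitude in $[-1,0]$, it suffices to show that $L_{(A,M)/\Z_p}$ has $p$-complete Tor-amplitude in $[-1,0]$ if and only if $L_{A/\Z_p}$ does. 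We may assume $M=\N^r$, the general free case following by a filtered colimit argument.

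First I would use the transitivity triangle for the composite $(\Z_p,0)\to(\Z_p[M],M)\to(A,M)$, in which the first map is the tautological chart and the second is the identity on the monoid:
\[
A\otimes_{\Z_p[M]}^{L}L_{(\Z_p[M],M)/(\Z_p,0)}
\longrightarrow
L_{(A,M)/\Z_p}
\longrightarrow
L_{(A,M)/(\Z_p[M],M)}.
\]
Because $(\Z_p[M],M)$ is log smooth over the trivial log point, $L_{(\Z_p[M],M)/(\Z_p,0)}$ is the module of log differentials $\Omega^1_{(\Z_p[M],M)/(\Z_p,0)}$, free of rank $r$ in degree~$0$, so the left-hand term is a free $A$-module of rank $r$ in degree~$0$. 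Since the second map is strict (it is the identity on monoids), $L_{(A,M)/(\Z_p[M],M)}\simeq L_{A/\Z_p[M]}$; concretely, a polynomial resolution $\Z_p[M][\N^{S_\bullet}]\to A$ of the $\Z_p[M]$-algebra $A$, equipped with the pulled-back log structure $M$ on each level, is a cofibrant resolution of $(A,M)$ over $(\Z_p[M],M)$, and at each level $\Omega^1_{(\Z_p[M][\N^{S_n}],M)/(\Z_p[M],M)}=\Omega^1_{\Z_p[M][\N^{S_n}]/\Z_p[M]}$, as the monoid is unchanged.

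Analogously, the transitivity triangle for $\Z_p\to\Z_p[M]\to A$,
\[
A\otimes_{\Z_p[M]}^{L}L_{\Z_p[M]/\Z_p}
\longrightarrow
L_{A/\Z_p}
\longrightarrow
L_{A/\Z_p[M]},
\]
has left-hand term a free $A$-module of rank $r$ in degree~$0$, since $\Z_p[M]$ is a polynomial $\Z_p$-algebra. To conclude, I would invoke the elementary fact that if $F\to G\to H$ is a cofiber sequence of $A$-module spectra with $F$ flat and concentrated in degree~$0$, then $G$ has $p$-complete Tor-amplitude in $[-1,0]$ if and only if $H$ does: a complex has $p$-complete Tor-amplitude in $[-1,0]$ exactly when its derived mod-$p$ reduction does over $A/p$, and after reducing the cofiber sequence mod $p$ and tensoring with an arbitrary discrete $A/p$-module $N$ the term $(F/p)\otimes_{A/p}N$ lies in degree~$0$, so the long exact sequence gives $H^n((G/p)\otimes_{A/p}N)\cong H^n((H/p)\otimes_{A/p}N)$ for every $n\notin\{-1,0\}$. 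Applying this to the two triangles above, whose right-hand terms both equal $L_{A/\Z_p[M]}$, shows that $L_{(A,M)/\Z_p}$ has $p$-complete Tor-amplitude in $[-1,0]$ if and only if $L_{A/\Z_p[M]}$ does, if and only if $L_{A/\Z_p}$ does, which is the assertion.

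The main obstacle is the logarithmic input in the second paragraph: identifying $L_{(A,M)/(\Z_p[M],M)}$ with the ordinary cotangent complex $L_{A/\Z_p[M]}$, and knowing that $L_{(\Z_p[M],M)/(\Z_p,0)}$ is a free module in degree~$0$. Both follow from strictness and log smoothness, and they are exactly where the hypothesis that $M$ is free is consumed: freeness makes $(\Z_p[M],M)$ log smooth over the trivial log point with a \emph{globally} free module of log differentials, and it lets the chart $\Z_p[M]\to A$ be chosen strict. Everything else is formal bookkeeping with Tor-amplitude.
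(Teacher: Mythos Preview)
Your proof is correct, and it takes a genuinely different route from the paper's. The paper uses the single transitivity sequence
\[
L_{A/\Z_p}\to L_{(A,M)/\Z_p}\to L_{(A,M)/A}
\]
and then computes $L_{(A,M)/A}$ as the cofiber of the map $A\otimes_{\Z[M]}L_{\Z[M]/\Z}\to A\otimes_{\Z[M]}L_{(\Z[M],M)/\Z}$ between two free $A$-modules, concluding that $L_{(A,M)/A}$ has $p$-complete Tor amplitude in $[-1,0]$. You instead run two \emph{parallel} transitivity triangles---one log, over $(\Z_p[M],M)$, and one plain, over $\Z_p[M]$---whose left-hand terms are both free of the same rank in degree~$0$ and whose right-hand terms coincide (both equal $L_{A/\Z_p[M]}$). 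The virtue of your decomposition is that it makes the biconditional completely transparent: your ``elementary fact'' about cofiber sequences with flat degree-$0$ left term gives $L_{(A,M)/\Z_p}\in[-1,0]\Leftrightarrow L_{A/\Z_p[M]}\in[-1,0]\Leftrightarrow L_{A/\Z_p}\in[-1,0]$ symmetrically. By contrast, from the paper's sequence one only directly reads off that if $L_{A/\Z_p}$ has $p$-complete Tor amplitude in $[-1,0]$ then so does $L_{(A,M)/\Z_p}$; the converse (which is the direction actually used later, in Proposition~\ref{sat.3}) needs an extra word, since the fiber of a map between complexes with amplitude in $[-1,0]$ a priori lands in $[-1,1]$. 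Your argument sidesteps this asymmetry entirely. One small simplification: the reduction to $M=\N^r$ via filtered colimits is unnecessary---your argument works verbatim for any free $M$, since $\Omega^1_{(\Z_p[M],M)/\Z_p}$ and $\Omega^1_{\Z_p[M]/\Z_p}$ are free $\Z_p[M]$-modules regardless of rank.
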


\begin{proof}
From the transitivity sequence
\[
L_{A/\Z_p} \to L_{(A,M)/\Z_p} \to L_{(A,M)/A}
\]
it suffices to show that \(L_{(A,M)/A}\) has \(p\)-complete Tor amplitude in \([-1,0]\). 
By the description of \(L_{(A,M)/A}\) in \cite[(3.5)]{BLPO}, there is a fiber sequence
\[
A \otimes_{\Z[M]} L_{\Z[M]/\Z} \to A \otimes_{\Z[M]} L_{(\Z[M],M)/\Z} \to L_{(A,M)/A}
\]
Both \(A \otimes_{\Z[M]} L_{\Z[M]/\Z}\) and \(A \otimes_{\Z[M]} L_{(\Z[M],M)/\Z}\) have \(p\)-complete Tor amplitude concentrated in degree \(0\); the freeness of \(M\) ensures this for the first term.
\end{proof}

For a log quasi-syntomic log ring \((A,M)\), the Nygaard filtration \(\Fil_\rN^{\geq \bullet}\) on \(\cPrism_{(A,M)}\) is defined in \cite[\S 7.2]{BLPO2} and extends the Nygaard filtration on the Nygaard-completed prismatic cohomology of quasi-syntomic rings \cite[Construction 7.7]{BMS19}. 
The log syntomic cohomology of \((A,M)\) is given by
\[
\Z_p^\syn(i)(A,M)
:=
\fib(
\Fil_{\rN}^{\geq i}\cPrism_{(A,M)}
\xrightarrow{\varphi_p-\mathrm{can}}
\cPrism_{(A,M)})
\]
where \(\varphi_p\) is the cyclotomic Frobenius and \(\mathrm{can}\) the canonical morphism.

\begin{prop}
\label{sat.3}
Let \((A,M)\) be a saturated log quasi-syntomic log ring with \(M\) free. 
Then there is a natural equivalence of filtered \(\E_\infty\)-rings
\[
\cPrism_{(A,M)} \simeq \lim_{n \in \Delta^\op} \cPrism_{A \widehat{\otimes}_{\Z\langle M\rangle} \Z\langle M_{\perf(n)}\rangle}
\]
\end{prop}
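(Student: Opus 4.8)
The plan is to combine the cotangent-complex descent of Proposition~\ref{sat.2} with the quasi-syntomic identification of Proposition~\ref{sat.5}, and then to run the argument of \cite[Theorem~4.18]{BLMP} verbatim, observing that the freeness hypothesis on $M$ supplies exactly the exactness input that was previously extracted from more restrictive assumptions. First I would note that, since $(A,M)$ is saturated log quasi-syntomic with $M$ free, Proposition~\ref{sat.5} shows $A$ is quasi-syntomic, and likewise each $A\widehat{\otimes}_{\Z\langle M\rangle}\Z\langle M_{\perf(n)}\rangle$ is quasi-syntomic: indeed $\Z[M]\to\Z[M_{\perf(n)}]$ is flat by the isomorphism $M_{\perf(n)}\cong M_\perf\oplus(M_\perf^\gp/M^\gp)^{\oplus(n-1)}$ of Definition~\ref{sat.1} together with the flatness of $\Z[M]\to\Z[M_\perf]$ used in the proof of Proposition~\ref{sat.2}, so the derived base change is discrete and quasi-syntomicity is preserved. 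Hence all the prismatic cohomologies on the right-hand side are defined as ordinary (Nygaard-completed) prismatic cohomologies of quasi-syntomic rings in the sense of \cite{BMS19}, and the cosimplicial object $n\mapsto \cPrism_{A\widehat{\otimes}_{\Z\langle M\rangle}\Z\langle M_{\perf(n)}\rangle}$ makes sense as a diagram of filtered $\E_\infty$-rings.

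Next I would construct the comparison map. Because log prismatic cohomology of log quasi-syntomic rings is obtained by right Kan extension (sheafification on the log quasi-syntomic site, with values $\pi_0\TC^-(S)_p^\wedge$ on log quasi-regular semiperfectoids, per \cite[\S7.2]{BLPO}), the augmentation $(A,M)\to (A\widehat{\otimes}_{\Z\langle M\rangle}\Z\langle M_{\perf(n)}\rangle, M_{\perf(n)})$ together with the fact that each $M_{\perf(n)}$ is saturated and the transition monoids are perfect yields a natural map
\[
\cPrism_{(A,M)}
\longrightarrow
\lim_{n\in\Delta^\op}\cPrism_{(A\widehat{\otimes}_{\Z\langle M\rangle}\Z\langle M_{\perf(n)}\rangle,\,M_{\perf(n)})},
\]
and I would then identify the target termwise with $\cPrism$ of the underlying ring, i.e.\ show $\cPrism_{(B,M_{\perf(n)})}\simeq\cPrism_B$ when the log structure is generated by a perfect saturated monoid and $B$ is quasi-syntomic; this is the log-to-trivial comparison already implicit in \cite[Theorem~4.18]{BLMP}, and over a perfectoid-type base the Frobenius on the monoid part becomes invertible so the log prismatic and non-log prismatic cohomologies agree. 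This gives the displayed equivalence as a map; compatibility with the Hodge/Nygaard and $p$-adic filtrations of Constructions~\ref{sat.6}--\ref{sat.7} is checked on the graded pieces.

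To prove the map is an equivalence I would pass to associated graded pieces of the (complete, exhaustive) Nygaard filtrations on both sides. By \cite[\S7.2]{BLPO2} and \cite[Construction~7.7]{BMS19} the graded pieces of $\Fil_\rN^{\geq\bullet}\cPrism_{(A,M)}$ are, up to Breuil--Kisin twist and shift, the $p$-completed wedge powers $(\wedge^i L_{(A,M)/\Z_p})_p^\wedge$ of the logarithmic cotangent complex; the analogous statement holds for each term on the right. Thus $\mathrm{gr}$ of the comparison map is precisely the map of Proposition~\ref{sat.2} (with $R=\Z_p$), which is an equivalence, using that limits along $\Delta^\op$ of uniformly bounded, complete filtrations commute with passage to $\mathrm{gr}$. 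Since both filtrations are complete and the map is a graded equivalence, it is an equivalence, and being a map of filtered $\E_\infty$-rings it is an equivalence of filtered $\E_\infty$-rings.

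The main obstacle I anticipate is the termwise identification $\cPrism_{(B,M_{\perf(n)})}\simeq\cPrism_B$ together with the commutation of the totalization over $\Delta^\op$ with the Nygaard filtration: one must ensure the Nygaard filtrations on the terms are uniformly complete (so that $\lim_{\Delta^\op}$ and $\lim_n\Fil^{\geq n}$ may be interchanged) and that the cosimplicial structure maps are genuinely filtered. Controlling this uniformity is where the freeness of $M$ is doing real work—via the discreteness of $A\widehat{\otimes}^L_{\Z\langle M\rangle}\Z\langle M_\perf\rangle$ from Proposition~\ref{sat.2}—and it is the point where the argument genuinely simplifies \cite[Theorem~4.18]{BLMP} rather than merely quoting it.
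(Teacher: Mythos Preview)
Your overall strategy matches the paper's: use Proposition~\ref{sat.5} to get that $A$ is quasi-syntomic, construct the comparison map through the log versions $\cPrism_{(A\widehat{\otimes}_{\Z\langle M\rangle}\Z\langle M_{\perf(n)}\rangle,\,M_{\perf(n)})}$, identify these with the non-log prismatic cohomologies, and then reduce to the cotangent-complex descent of Proposition~\ref{sat.2}. The paper handles your stated ``main obstacle'' about the log-to-non-log comparison by citing \cite[Proposition~4.14]{BLMP} directly for the equivalence of the two \emph{limits}, so you need not establish a termwise identification.

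There is, however, a genuine gap in your reduction to graded pieces. You claim that $\gr_\rN^i\cPrism_{(A,M)}$ is ``up to Breuil--Kisin twist and shift'' the $p$-completed wedge power $(\wedge^i L_{(A,M)/\Z_p})_p^\wedge$. This is not quite the structure: the Nygaard graded piece carries a \emph{further} finite filtration whose successive quotients are the wedge powers (cf.\ \cite[Proposition~7.4]{BLPO2}), and---more importantly---that description requires $A$ to be an algebra over a perfectoid ring (the Breuil--Kisin twist has no meaning otherwise). This is exactly why the paper inserts an additional step you omit: it passes along the quasi-syntomic cover $A\to A\widehat{\otimes}_{\Z_p}\Z_p^\cyc$ to reduce to the case where $A$ lies over a perfectoid $R$, and only then invokes \cite[Proposition~7.4]{BLPO2} together with Proposition~\ref{sat.2}. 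Without that reduction, your appeal to the cotangent-complex description of the Nygaard graded pieces is not justified, and your commutation-of-limits argument has no traction. Once you add the $\Z_p^\cyc$ reduction, the rest of your outline becomes essentially the paper's proof.
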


\begin{proof}
By Proposition \ref{sat.5}, \(A\) is quasi-syntomic. 
Consider the natural morphism of filtered \(\E_\infty\)-rings
\[
\cPrism_{(A, M)} \to \lim_{n \in \Delta^{\text{op}}} \cPrism_{(A \widehat{\otimes}_{\Z\langle M \rangle} \Z\langle M_{\text{perf}(n)} \rangle, M_{\text{perf}(n)})}
\]
To show this is an equivalence, note that the natural map
\[
\lim_{n \in \Delta^{\text{op}}} \cPrism_{(A \widehat{\otimes}_{\Z\langle M \rangle} \Z\langle M_{\text{perf}(n)} \rangle)} \to \lim_{n \in \Delta^{\text{op}}} \cPrism_{(A \widehat{\otimes}_{\Z\langle M \rangle} \Z\langle M_{\text{perf}(n)} \rangle, M_{\text{perf}(n)})}
\]
is an equivalence by \cite[Proposition 4.14]{BLMP}. 
Let \(\Z_p^{\text{cyc}} := \Z_p[\mu_{p^\infty}]_p^{\wedge}\) \cite[Example 3.6]{BMS18}. 
Since \(\Z_p^{\text{cyc}}\) is quasi-syntomic and \(\Z_p \to \Z_p^{\text{cyc}}\) is \(p\)-completely faithfully flat, it is a quasi-syntomic cover. 
Applying quasi-syntomic descent for \(A \to A \widehat{\otimes}_{\Z_p} \Z_p^{\text{cyc}}\) reduces the claim to the case where \(A\) is an \(R\)-algebra for a perfectoid ring \(R\). 
The result then follows from Proposition \ref{sat.2} and \cite[Proposition 7.4]{BLPO2}.
\end{proof}

\begin{rmk}
\label{sat.4}
We compute \(\Z_p^\syn(i)(k[x]/x^e,\N)\) in Theorem \ref{logfat.1} for a perfect \(\F_p\)-algebra \(k\) and integers \(e,i \geq 1\), where the log structure map \(\N \to k[x]/x^e\) sends \(n \mapsto x^n\). 
Note that \((k[x]/x^e,\N)\) does not satisfy the assumption of \cite[Theorem 4.18]{BLMP}, since the induced map \(k[\N] \to k[x]/x^e\) is not flat. 
This motivates the alternative assumption in Proposition \ref{sat.3}. 
Moreover, Proposition \ref{sat.3} implies \cite[Theorem 4.30]{BLMP}\ only for the case of \(\SmlSm_k\) rather than \(\lSm_k\).
\end{rmk}

From \cite[Example 2.4(3)]{2007.14037}, every monoid \(M\) gives rise to a natural \(\delta\)-ring structure on \(\Z_{(p)}[M]\), and hence on \(\Z_p\langle M \rangle\).

\begin{thm}
\label{sat.8}
Let \((A,M)\) be a saturated log quasi-syntomic log ring such that \(A\) is a \(p\)-torsion-free quasi-syntomic \(\delta\)-ring, \(M\) is free, and there exists a \(\delta\)-ring homomorphism \(\Z_p\langle M\rangle \to A\). 
Then there is a natural equivalence of filtered \(\E_\infty\)-rings
\[
\cPrism_{(A/p,M)} \simeq (L\Omega_{(A,M)/\Z_p})_p^\wedge
\]
where \(\cPrism_{(A/p,M)}\) carries the Nygaard filtration and \(L\Omega_{(A,M)/\Z_p}\) the filtration from Construction \ref{sat.7}.
\end{thm}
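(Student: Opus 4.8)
The plan is to reduce the statement to the non-logarithmic comparison between Nygaard-completed prismatic cohomology and $p$-completed derived de~Rham cohomology, and then transport that equivalence along the saturated descent of Proposition~\ref{sat.3}. First I would record the classical input: for a $p$-torsion-free quasi-syntomic $\delta$-ring $B$, one has a natural equivalence of filtered $\E_\infty$-rings $\cPrism_{B/p}\simeq(L\Omega_{B/\Z_p})_p^\wedge$, where the left side carries the Nygaard filtration and the right side the filtration from Construction~\ref{sat.7}; this is the $\delta$-ring case of the comparison in \cite{BMS19} (the point being that for a $\delta$-ring the Frobenius lift trivializes the prism and identifies the Nygaard filtration with the Hodge-times-$p$-adic filtration on derived de~Rham cohomology). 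I would state this as a lemma, citing \cite[\S~5, \S~12]{BMS19}.

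Next I would apply this pointwise to the cosimplicial system appearing in Proposition~\ref{sat.3}. The hypotheses — $A$ a $p$-torsion-free quasi-syntomic $\delta$-ring with a $\delta$-ring map $\Z_p\langle M\rangle\to A$ — are arranged precisely so that each term $A_n:=A\widehat{\otimes}_{\Z\langle M\rangle}\Z\langle M_{\perf(n)}\rangle$ inherits a $\delta$-ring structure (using the $\delta$-structure on $\Z_p\langle M_{\perf(n)}\rangle$ from \cite{2007.14037}, which exists since each $M_{\perf(n)}$ is a monoid, and base change of $\delta$-rings along $\delta$-maps) and remains $p$-torsion-free and quasi-syntomic (freeness of $M$ makes $\Z\langle M\rangle\to\Z\langle M_\perf\rangle$ flat, as in Proposition~\ref{sat.2}, and one checks quasi-syntomicity is preserved under this base change). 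Hence the classical lemma gives $\cPrism_{A_n/p}\simeq(L\Omega_{A_n/\Z_p})_p^\wedge$ naturally in $n\in\Delta$, compatibly with filtrations.

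I would then assemble the conclusion by taking $\lim_{n\in\Delta^\op}$ of both sides. On the left, Proposition~\ref{sat.3} (applied to the log ring $(A/p,M)$, which is saturated log quasi-syntomic with $M$ free) identifies $\cPrism_{(A/p,M)}$ with $\lim_n\cPrism_{A_n/p}$ as filtered $\E_\infty$-rings — here I should note that $(A/p)\widehat{\otimes}_{\Z\langle M\rangle}\Z\langle M_{\perf(n)}\rangle\simeq A_n/p$ since reduction mod $p$ commutes with the relevant completed base change. On the right, I must check that $\lim_{n\in\Delta^\op}(L\Omega_{A_n/\Z_p})_p^\wedge\simeq(L\Omega_{(A,M)/\Z_p})_p^\wedge$ as filtered $\E_\infty$-rings: this is the logarithmic-de~Rham analogue of descent, and on each graded piece it unwinds to the equivalence $(\wedge^i L_{(A,M)/\Z_p})_p^\wedge\simeq\lim_n(\wedge^i L_{A_n/\Z_p})_p^\wedge$ of Proposition~\ref{sat.2} (with $R=\Z_p$), together with completeness of the Hodge filtration to promote the graded-level statement to the filtered level. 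Combining the three identifications yields the desired equivalence.

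The main obstacle I anticipate is the bookkeeping on the de~Rham side — namely verifying that $L\Omega_{(A,M)/\Z_p}$, defined by left Kan extension from $\lPoly$, really is computed by the saturated cosimplicial resolution $n\mapsto L\Omega_{A_n/\Z_p}$, and that this holds at the level of the \emph{filtered} $\E_\infty$-ring rather than just on underlying objects or associated gradeds. The graded pieces are handled by Proposition~\ref{sat.2}, so the issue is purely one of convergence: I would argue that the Hodge filtration is complete on each $L\Omega_{A_n/\Z_p}$ (after $p$-completion, using the bounded $p^\infty$-torsion and the Tor-amplitude bound) and that a limit of complete filtered objects with equivalences on gradeds is an equivalence. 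A secondary, more technical point is ensuring all the $\delta$-structures and the comparison equivalence of \cite{BMS19} are natural in enough morphisms to descend along $\Delta^\op$; this should follow from the functoriality already built into the cited constructions, but it deserves an explicit remark.
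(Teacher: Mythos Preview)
Your proposal is correct and follows essentially the same route as the paper: reduce via Propositions~\ref{sat.2} and~\ref{sat.3} to the non-logarithmic comparison for each $A_n=A\widehat{\otimes}_{\Z_p\langle M\rangle}\Z_p\langle M_{\perf(n)}\rangle$, verify that $A_n$ is a $p$-torsion-free $\delta$-ring (flatness of $\Z[M]\to\Z[M_{\perf(n)}]$ and base change of $\delta$-structures, as you note), and then invoke the known equivalence $\cPrism_{A_n/p}\simeq(L\Omega_{A_n/\Z_p})_p^\wedge$. The paper cites \cite[Remark~8.11]{zbMATH07729746} rather than \cite{BMS19} for this last step, and is terser about the convergence bookkeeping you flag, but the architecture is identical.
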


\begin{proof}
By Propositions \ref{sat.2} and \ref{sat.3}, it suffices to construct a natural equivalence of filtered \(\E_\infty\)-rings
\[
\cPrism_{A/p \widehat{\otimes}_{\Z_p\langle M \rangle} \Z_p\langle M_{\perf(n)}\rangle}
\simeq
(L\Omega_{A \widehat{\otimes}_{\Z_p\langle M \rangle} \Z_p\langle M_{\perf(n)}\rangle/\Z_p})_p^\wedge
\]
for each integer \(n \geq 0\). 
Since \(\Z[M] \to \Z[M_{\perf(n)}]\) is flat, \(A \widehat{\otimes}_{\Z_p\langle M\rangle} \Z_p\langle M_{\perf(n)}\rangle\) is \(p\)-torsion-free. 
Moreover, both \(\Z_p\langle M\rangle \to A\) and \(\Z_p\langle M\rangle \to \Z_p\langle M_{\perf(n)}\rangle\) are \(\delta\)-ring maps, so \(A \widehat{\otimes}_{\Z_p\langle M\rangle} \Z_p\langle M_{\perf(n)}\rangle\) is itself a \(\delta\)-ring by \cite[Remark 2.7]{BS22}. 
The result follows from \cite[Remark 8.11]{zbMATH07729746}.
\end{proof}

\section{Divided power algebras}
\label{section:Divided power algebras}

In this section, 
we review and extend the theory of divided power (DP) algebras and their derived analogues. 
Starting from the classical DP envelope functor \(\Env: (A,I) \mapsto (D_A(I),\overline I,\overline\gamma)\), we recall its filtration by divided powers and its independence from the base ring. 
The discussion then passes to the animated setting: Mao’s derived divided power envelope \(\Env^\an\) and the associated filtered animated ring \(LD_I(A)\), obtained by left Kan extension from the classical construction. 
A conjugate filtration \(\Fil_\conj^{\ge \bullet}\) is introduced, with graded pieces expressed as
\[
\gr_\conj^{-m} LD_A(I) \simeq (\wedge_{A/I}^m L_{(A/I)/A})^{(1)}[-m],
\]
and, when \(L_{(A/I)/A}\) has Tor amplitude in degree \(-1\), this specializes to
\[
\gr_\conj^{-m} LD_A(I) \simeq (\Gamma_{A/I}^m(I/I^2))^{(1)}.
\]
It is shown that if \(LD_A(I)\) is discrete, then \(LD_I(A)\simeq D_I(A)\) as animated (and, under mild conditions, filtered) rings.  
Mathew’s criterion (Proposition~\ref{comp.11}) ensures discreteness and torsion-freeness of \(LD_A(I)\) under flatness and Tor-amplitude hypotheses, implying \(LD_A(I)\simeq D_A(I)\).  
Finally, the $p$-adic completions \(LD_A(I)_p^\wedge\) and \(D_A(I)_p^\wedge\) are shown to coincide, establishing that derived and classical divided power envelopes agree in the torsion-free, quasi-syntomic setting.

\vspace{0.1in}

For a ring \(R\), let \(\Pair_R^\gamma\) denote the category of divided power \(R\)-algebras \((A,I,\gamma)\). 
A right adjoint to the forgetful functor \(\Pair_R^\gamma \to \Pair_R\) is the \emph{divided power envelope functor}
\begin{equation}
  \label{comp.0.1}
\Env:\Pair_R \to \Pair_R^\gamma,\qquad (A,I) \mapsto (D_A(I),\overline I,\overline\gamma)
\end{equation}
as proved in \cite[Theorem 3.19]{MR491705}. 
For brevity, set
\[
x^{[n]} := \overline\gamma_n(\varphi(x))
\]
for \(x\in I\) and \(n\ge 0\), where \(\varphi:A\to D_A(I)\) is the canonical map. 
Then \(D_A(I)\) is the \(A\)-algebra generated by the elements \(x^{[n]}\) for \(x\in I\) and \(n\ge 0\) \cite[Remarks 3.20(3)]{MR491705}.

There is a natural multiplicative filtration
\begin{equation}
  \label{comp.0.2}
\Fil^{\ge m}D_A(I):=\overline I^{[m]}
\end{equation}
for integers \(m\), where \(\overline I^{[m]}\) is the ideal generated by products \(x_1^{[n_1]}\cdots x_r^{[n_r]}\) with \(x_i\in I\) and \(n_i\ge 0\) such that \(n_1+\cdots+n_r\ge m\). 
The filtered ring \(D_A(I)\) is independent of the base ring \(R\).

Let \((A,I)\in\Pair_{\Z_{(p)}}\). 
If \(D_A(I)\) is torsion-free, then \cite[Corollary 3.23]{MR491705} gives an isomorphism
\[
D_A(I)\otimes_\Z \Q \cong A\otimes_\Z \Q
\]
so we may regard \(D_A(I)\) as a subring of \(A\otimes_\Z \Q\). 
Moreover, for \(x\in I\) and \(n\ge 0\),
\[
x^{[n]}=\frac{x^n}{n!}
\]

\medskip

Before passing to derived divided power algebras, we recall the (non-abelian) left-derived functor construction. 
Let \(\cB\) be an \(\infty\)-category and \(\cC\subset \cP_\Sigma(\cB)\) a full subcategory containing \(\cB\). 
Let \(\cD\) be an \(\infty\)-category admitting sifted colimits. 
For a functor \(F:\cC\to\cD\), the \emph{left derived functor} \(LF:\cP_\Sigma(\cB)\to\cD\) is defined as the left Kan extension of \(F|_{\cB}\) along the inclusion \(\cB\hookrightarrow\cC\). 
By the universal property of left Kan extensions, there is a natural transformation \(LF|_{\cC}\to F\). 
Although the construction depends a priori on \(\cB\), our subsequent choices of \(\cB\) and \(\cC\) will be clear. 
For \(X\in\cC\), we say that \emph{\(F(X)\) is left Kan extended from \(\cB\)} if \(LF(X)\simeq F(X)\).

\medskip

We now recall Mao’s construction of derived divided power algebras. 
Let \(\Pair_R^{\gamma,\st}\) be the essential image of \(\Pair_R^\st\) under \(\Env\). 
Set
\[
\Pair_R^{\gamma,\an}:=\cP_\Sigma(\Pair_R^{\gamma,\st})
\]
By \cite[Lemma 3.19]{MR4851406}, we may view \(\Pair_R^\gamma\) as a full subcategory of \(\Pair_R^{\gamma,\an}\). 
The functor \eqref{comp.0.1} restricts to \(\Pair_R^\st\) and left Kan extends to the \emph{derived divided power envelope}
\[
\Env^\an:\Pair_R^\an\to \Pair_R^{\gamma,\an}
\]
For \((A,I)\in\Pair_R^\an\), let \(LD_I(A)\) be the underlying animated ring of \(\Env^\an(A,I)\), endowed with the natural filtration \(\Fil^{\ge\bullet}LD_I(A)\) obtained by left Kan extension from \(\Pair_R^{\gamma,\st}\). 
This filtered animated ring is independent of \(R\) \cite[Lemma 3.51]{MR4851406}.

\medskip

Next we review the conjugate filtrations on derived divided power algebras; see \cite[Lemma 3.42]{1204.6560} and \cite[Construction 3.66, Definition 3.68]{MR4851406}. 
Let \((B,J)\in\Pair_{\F_p}^\st\), and let \(y_1,\dots,y_n\) be free generators of \(J\). 
Since \(y^p=p!\,y^{[p]}=0\) in \(D_B(J)\), there is a natural \((B/J)^{(1)}\)-algebra structure on \(D_B(J)\). 
For a \(B/J\)-module \(M\), set \(M^{(1)}:=M\otimes_{B,\varphi}B\), where \(\varphi\) is the Frobenius. 
For \(m\in\Z\), let \(\Fil_\conj^{\ge -m}D_B(J)\) be the \(B\)-submodule generated by the monomials \(y_1^{[i_1]}\cdots y_n^{[i_n]}\) with \(i_k\ge 0\) and \(i_1+\cdots+i_n\le m\). 
The filtration \(\Fil_\conj^{\ge\bullet}D_B(J)\) is multiplicative and exhaustive, and \cite[Lemma 3.42]{1204.6560} yields a natural isomorphism
\begin{equation}
\label{comp.10.1}
\gr_\conj^{-m}D_B(J)\cong\bigl(\Gamma_{B/J}^m(J/J^2)\bigr)^{(1)}
\end{equation}

Moreover,
\[
\Gamma_{B/J}^m(J/J^2)\simeq \wedge_{B/J}^m L_{(B/J)/B}[-m]
\]
since \(L_{(B/J)/B}\simeq J/J^2[1]\). 
By left Kan extension, for \((A,I)\in\Pair_{\F_p}^\an\) we obtain a multiplicative exhaustive filtration \(\Fil_\conj^{\ge\bullet}LD_A(I)\) with
\[
\gr_\conj^{-m}LD_A(I)\simeq\bigl(\wedge_{A/I}^m L_{(A/I)/A}\bigr)^{(1)}[-m]
\]
If \(L_{(A/I)/A}\) has Tor amplitude concentrated in degree \(-1\), then \cite[\href{https://stacks.math.columbia.edu/tag/08RA}{Tag 08RA}]{stacks-project} gives \(L_{(A/I)/A}\simeq I/I^2[1]\), so \(I/I^2\) is flat over \(A/I\) and
\begin{equation}
\label{comp.10.2}
\gr_\conj^{-m}LD_A(I)\simeq \bigl(\Gamma_{A/I}^m(I/I^2)\bigr)^{(1)}
\end{equation}

\begin{prop}
\label{comp.12}
Let \((A,I)\in\Pair_R\) with \(R\) a ring. 
If \(LD_A(I)\) is discrete, then the map \(LD_I(A)\to D_I(A)\) is an equivalence of animated rings. 
If, in addition, \(L_{(A/I)/A}\) has Tor amplitude concentrated in degree \(-1\), then \(LD_I(A)\to D_I(A)\) is an equivalence of filtered animated rings.
\end{prop}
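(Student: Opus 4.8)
The plan is to produce the comparison map from the universal property of the left Kan extension, reduce the first assertion to an identification on $\pi_0$, and then analyze the graded pieces of the divided power filtration for the refinement to filtered rings. By construction $\Env^\an$ is the left derived functor (in the sense recalled above) of $\Env|_{\Pair_R^\st}$, with $\cB=\Pair_R^\st$, and $\Env\colon\Pair_R\to\Pair_R^\gamma\hookrightarrow\Pair_R^{\gamma,\an}$ restricts on $\Pair_R^\st$ to the same functor. Taking $\cC=\Pair_R$, the natural transformation $LF|_\cC\to F$ furnishes $\Env^\an|_{\Pair_R}\Rightarrow\Env$ as functors valued in filtered animated rings, hence a natural map $\eta_{(A,I)}\colon LD_I(A)\to D_I(A)$ for $(A,I)\in\Pair_R$, compatible with the divided power filtrations. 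Since $D_I(A)$ is discrete, $\eta$ factors through $\pi_0 LD_I(A)$; thus, under the hypothesis that $LD_I(A)$ is discrete, the first assertion reduces to showing that the induced ring homomorphism $\pi_0 LD_I(A)\to D_I(A)$ is an isomorphism.

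To establish this I would argue that $(A,I)\mapsto\pi_0 LD_I(A)$ preserves sifted colimits — because $\Env^\an$ does by construction and $\pi_0$ is left adjoint to the inclusion of ordinary rings into animated rings — and that it agrees with the classical envelope $D$ on $\Pair_R^\st$. Presenting $(A,I)$ as the geometric realization in $\Pair_R^\an$ of a simplicial object $(A_\bullet,I_\bullet)$ levelwise in $\Pair_R^\st$, this gives $\pi_0 LD_I(A)\cong\colim_{[n]\in\Delta^\op}D_{A_n}(I_n)$ in $\Ring$, and it remains to identify the colimit with $D_I(A)$; equivalently, that the zeroth left derived functor of $\Env$ agrees with $\Env$ on $\Pair_R$. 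This is the delicate point: since $\sInd(\Pair_R^\st)\not\simeq\Pair_R$ (Example~\ref{ani.7}), the chosen presentation of $(A,I)$ need not be a colimit diagram in $\Pair_R$, so one must check that the discrepancy — which concerns the ideal datum and higher homotopy — does not affect $\pi_0$ of $\Env$. I would either isolate this statement in \cite{MR4851406} or verify it directly, using that the underlying ring of $D_A(I)$ is the $A$-algebra generated by the symbols $x^{[n]}$ ($x\in I$, $n\ge 0$) modulo finitary relations whose formation is compatible with the relevant colimits. Combined with the discreteness of $LD_I(A)$, this yields $LD_I(A)\simeq D_I(A)$.

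For the filtered statement I retain the discreteness hypothesis and add that $L_{(A/I)/A}$ has Tor amplitude in degree $-1$. The filtrations $\Fil^{\ge\bullet}$ on both sides are multiplicative and exhaustive with $\Fil^{\ge m}$ equal to everything for $m\le 0$, and $\eta$ is a filtered map that is an equivalence on underlying rings; so by descending induction on the filtration degree it suffices to show $\eta$ induces equivalences on the graded pieces $\gr^m$, $m\ge 0$. On the right, $\gr^m D_I(A)\cong\Gamma^m_{A/I}(I/I^2)$ by the classical computation of the divided power filtration \cite{MR491705}; on the left, left Kan extending the same formula from $\Pair_R^\st$ — where $J/J^2\simeq L_{(B/J)/B}[-1]$ — identifies $\gr^m LD_I(A)$ with the $m$-th derived divided power $L\Gamma^m$ applied to the derived conormal complex. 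Under the Tor-amplitude hypothesis, \cite[Tag 08RA]{stacks-project} gives $L_{(A/I)/A}\simeq I/I^2[1]$ with $I/I^2$ flat over $A/I$, and the derived quotient is discrete equal to the classical $A/I$; since $L\Gamma^m$ of a flat module carries no higher homotopy, $\gr^m LD_I(A)\simeq\Gamma^m_{A/I}(I/I^2)$ and $\eta$ realizes the identity on graded pieces, which completes the argument.

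I expect the main obstacle to be precisely the $\pi_0$-identification $\pi_0 LD_I(A)\cong D_I(A)$ for general $(A,I)\in\Pair_R$, which must be made in spite of the genuine failure of $\sInd(\Pair_R^\st)\simeq\Pair_R$ that prevents one from transporting the sifted-colimit presentation of $(A,I)$ naively into $\Pair_R$; the cleanest route is to extract from \cite{MR4851406} the statement that $\tau_{\le 0}\Env^\an$ restricts to $\Env$ on $\Pair_R$. Granting that, the degeneration of the graded pieces under the cotangent hypothesis and the bookkeeping with the divided power filtration are routine.
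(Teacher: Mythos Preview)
Your outline is sound and you correctly isolate the one delicate point: the identification $\pi_0 LD_I(A)\cong D_I(A)$ for general $(A,I)\in\Pair_R$, which you propose to extract from \cite{MR4851406}. The paper does exactly this, but via a cleaner categorical route that sidesteps the $\sInd(\Pair_R^\st)\not\simeq\Pair_R$ obstruction entirely. Rather than compute $\pi_0$ and then use discreteness, the paper works inside $\Pair_R^{\gamma,\an}$: the inclusion $\iota\colon\Pair_R^\gamma\hookrightarrow\Pair_R^{\gamma,\an}$ has a left adjoint $\alpha$ \cite[Remark~3.24]{MR4851406}, and one always has $\alpha(\Env^\an(A,I))\cong\Env(A,I)$ \cite[Corollary~3.26]{MR4851406}. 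The point is then that discreteness of the underlying animated ring forces $\Env^\an(A,I)$ to lie in the essential image of $\iota$ \cite[Proposition~3.38]{MR4851406}, whence $\Env^\an(A,I)\simeq\iota\alpha\Env^\an(A,I)\simeq\iota\Env(A,I)$. This adjunction argument packages your $\pi_0$-identification together with the compatibility of divided power structures, and avoids any direct analysis of sifted presentations of $(A,I)$.

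For the filtered refinement your graded-piece strategy is reasonable, but the claimed classical formula $\gr^m D_I(A)\cong\Gamma^m_{A/I}(I/I^2)$ is not stated in \cite{MR491705} at the generality you need: Berthelot--Ogus prove it for ideals locally generated by a regular sequence, whereas the Tor-amplitude hypothesis only gives Koszul-regularity (flatness of $I/I^2$ and $L_{(A/I)/A}\simeq I/I^2[1]$). The paper instead cites \cite[Proposition~3.90]{MR4851406} directly. Your approach can be salvaged without the classical formula: once $LD_I(A)\simeq D_I(A)$ as rings and each $\gr^m LD_I(A)\simeq\Gamma^m_{A/I}(I/I^2)$ is discrete, one shows by induction on $m$ that each $\Fil^{\ge m}LD_I(A)$ is discrete, and then identifies it with the ideal $\overline{I}^{[m]}\subset D_I(A)$ using that the comparison map is injective (both sides embed in $\Fil^{\ge 0}$) and surjective (the divided-power generators of $\overline{I}^{[m]}$ lift to $\Fil^{\ge m}LD_I(A)$ by multiplicativity of the filtration). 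This recovers the classical graded formula as a consequence rather than an input.
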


\begin{proof}
We assemble results from \cite{MR4851406}. 
The inclusion
\[
\iota:\Pair_R^\gamma\to\Pair_R^{\gamma,\an}
\]
admits a left adjoint
\[
\alpha:\Pair_R^{\gamma,\an}\to\Pair_R^\gamma
\]
\cite[Remark 3.24]{MR4851406}. 
By \cite[Corollary 3.26]{MR4851406},
\[
\alpha(\Env^\an(A,I))\cong \Env(A,I)
\]
If \(LD_A(I)\) is discrete, then \cite[Proposition 3.38]{MR4851406} gives \(\Env^\an(A,I)\simeq \iota(A',I',\gamma')\) for some \((A',I',\gamma')\in\Pair_R^\gamma\). 
Since \(\alpha\iota\simeq\id\), we get \((A',I',\gamma')\simeq \Env(A,I)\), hence \(LD_I(A)\to D_I(A)\) is an equivalence of animated rings. 
Under the Tor-amplitude hypothesis, \cite[Proposition 3.90]{MR4851406} upgrades this to an equivalence of filtered animated rings.
\end{proof}

\begin{prop}[Mathew]
\label{comp.11}
Let \((A,I)\in\Pair_{\Z_{(p)}}\). 
Assume \(A\) and \(A/I\) are torsion-free, \(A/p\) has flat Frobenius, and \(L_{(A/I)/A}\) has \(p\)-complete Tor amplitude concentrated in degree \(-1\). 
Then \(LD_A(I)\to D_A(I)\) is an equivalence of animated rings, and \(D_A(I)\) is torsion-free.
\end{prop}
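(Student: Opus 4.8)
The plan is to verify the hypotheses of Proposition~\ref{comp.12}, i.e.\ to show that $LD_A(I)$ is discrete; torsion-freeness of $D_A(I)$ will then come out of the same analysis. Since $LD_A(I)$ is already equipped with the exhaustive conjugate filtration $\Fil_\conj^{\ge\bullet}$ with graded pieces computed mod $p$, the natural move is to work $p$-adically: first show $LD_A(I)_p^\wedge$ is discrete and $p$-torsion-free, then bootstrap back to the integral statement using the torsion-freeness of $A$. First I would reduce to characteristic $p$ by the standard dévissage: $LD_A(I)/p \simeq LD_{A/p}(I/p)$ (derived base change for the left Kan extended functor $\Env^\an$, which commutes with the colimit defining $-\otimes^L_{\Z}\F_p$), so it suffices to prove that $LD_{A/p}(I/p)$ is discrete and then lift. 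Here the hypothesis that $A/p$ has flat Frobenius is exactly what makes the Frobenius twist $(-)^{(1)}$ on $A/I$-modules (and on $A/p$-modules) exact, so that the formula $\gr_\conj^{-m}LD_A(I)\simeq(\wedge^m_{A/I}L_{(A/I)/A})^{(1)}[-m]$ has no higher homotopy contributions coming from the twist itself.

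The key computational step is to show each graded piece $\gr_\conj^{-m}LD_{A/p}(I/p)$ is discrete. The $p$-complete Tor amplitude hypothesis on $L_{(A/I)/A}$ in degree $-1$ gives, after reduction mod $p$, that $L_{(A/I)/A}\otimes^L_A A/p \simeq (I/I^2)\otimes^L_A A/p$ sits in degree $-1$ with $(I/I^2)\otimes_A A/p$ flat over $(A/I)\otimes_A A/p$; then by \eqref{comp.10.2} we get $\gr_\conj^{-m}LD_{A/p}(I/p)\simeq(\Gamma^m_{(A/I)/p}((I/I^2)/p))^{(1)}$, which is a flat $(A/I)/p$-module placed in homological degree $0$, hence discrete. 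Since the conjugate filtration is exhaustive and each associated graded is concentrated in degree $0$, a convergence argument (the filtration is an increasing bounded-below exhaustive filtration indexed by $-m\le 0$, and on each finitely generated subobject it is eventually constant because $J$ is free of finite rank in the standard objects, so left Kan extension preserves the colimit structure) shows $LD_{A/p}(I/p)$ is itself discrete. This is the step I expect to be the main obstacle: one must be careful that exhaustiveness plus discreteness of all graded pieces really forces discreteness of the total object in the animated/$\infty$-categorical setting—this needs that the filtration is suitably bounded (which follows since the generators $y_i$ are finite in number in the standard pairs, and a general $(A,I)$ is a sifted colimit of such), so the spectral sequence of the filtration degenerates and converges.

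With $LD_{A/p}(I/p)$ discrete, one deduces $LD_A(I)_p^\wedge$ is discrete and $p$-torsion-free: the fiber sequence $LD_A(I)\xrightarrow{p}LD_A(I)\to LD_{A/p}(I/p)$ shows $\pi_1$ of the $p$-completion vanishes and multiplication by $p$ is injective on $\pi_0$. Finally I would remove the completion: since $A$ is $\Z_{(p)}$-flat (torsion-free) and $I$, $A/I$ are torsion-free, $D_A(I)$ is torsion-free by the classical theory (it embeds in $A\otimes_\Z\Q$ via $x^{[n]}=x^n/n!$, as recalled after \eqref{comp.0.2}), and a torsion-free finitely-presented-in-the-relevant-sense module whose $p$-completion is discrete is already discrete before completing—more precisely, $LD_A(I)$ has homotopy groups that are $p$-torsion-free (as $A$ is a $\Z_{(p)}$-algebra all homotopy is a module over $\Z_{(p)}$, and derived mod-$p$ reduction being discrete kills the higher homotopy after tensoring with $\Z_{(p)}/p$, forcing it to vanish). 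Hence $LD_A(I)$ is discrete, Proposition~\ref{comp.12} applies to give $LD_I(A)\xrightarrow{\sim}D_I(A)$, and the torsion-freeness of $D_A(I)$ has been established along the way. $\hfill\qed$
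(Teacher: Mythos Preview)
Your reduction mod $p$ and the conjugate-filtration argument for discreteness of $LD_{A/p}(I/p)$ are correct and are essentially what the paper does (the paper outsources this step to \cite[Corollary~3.79]{MR4851406}, which is proved exactly via the conjugate filtration). Your worry about convergence is unnecessary: the conjugate filtration is increasing and exhaustive with $\Fil_\conj^{\ge 1}=0$, so each $\Fil_\conj^{\ge -m}$ is discrete by finite induction, and a filtered colimit of discrete objects is discrete.

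The genuine gap is in the final step. From the cofiber sequence $LD_A(I)\xrightarrow{p}LD_A(I)\to LD_A(I)/p$ and discreteness of $LD_A(I)/p$, the long exact sequence shows that for $i\ge 1$ multiplication by $p$ on $H_i(LD_A(I))$ is both injective \emph{and} surjective, so $H_i(LD_A(I))$ is a $\Q$-vector space. This does \emph{not} force it to vanish; your claim that ``derived mod-$p$ reduction being discrete \ldots\ forc[es] it to vanish'' is false (take $H_i=\Q$). Equivalently, $LD_A(I)_p^\wedge$ being discrete says nothing about possible rational higher homotopy of $LD_A(I)$. The missing input is the rational computation $LD_A(I)\otimes_\Z\Q\simeq A\otimes_\Z\Q$ (Mao, \cite[Lemma~3.47]{MR4851406}), which is discrete since $A$ is; combined with torsion-freeness of each $H_i$ this gives $H_i=0$ for $i\ge 1$. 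Your attempt to shortcut this by asserting that $D_A(I)$ is torsion-free ``by the classical theory'' is circular: the passage recalled after \eqref{comp.0.2} is explicitly conditional on $D_A(I)$ being torsion-free, which is precisely the conclusion of the proposition. In the paper's argument, torsion-freeness of $D_A(I)$ is deduced only after identifying $D_A(I)\simeq LD_A(I)$ and knowing the latter is torsion-free.
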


\begin{proof}
Mathew \cite[Construction 7.15]{zbMATH07729746} sketches the argument; we supply details in the spirit of \cite[Proposition~3.83]{MR4851406}. 
By \cite[Lemma 3.52]{MR4851406} there is a commutative square
\[
\begin{tikzcd}
\Pair_{\Z_{(p)}}^\an \ar[r, "\Env^\an"] \ar[d] & \Pair_{\Z_{(p)}}^{\gamma,\an} \ar[d] \\
\Pair_{\F_p}^\an \ar[r, "\Env^\an"] & \Pair_{\F_p}^{\gamma,\an}
\end{tikzcd}
\]
whose vertical arrows are base change. 
Thus we obtain an equivalence of filtered animated rings
\begin{equation}
\label{comp.11.1}
LD_{A/p}(I/p)\simeq LD_A(I)/p
\end{equation}
where \(I/p\) is the image of \(I\) in \(A/p\). 
By \cite[Corollary 3.79]{MR4851406}, \(LD_{A/p}(I/p)\) is discrete, hence so is \(LD_A(I)/p\). 
Therefore the map \(p:H_i(LD_A(I))\to H_i(LD_A(I))\) is injective for all \(i\ge 0\), so \(H_i(LD_A(I))\) is torsion-free. 
On the other hand, \cite[Lemma 3.47]{MR4851406} gives \(LD_A(I)\otimes_\Z\Q\simeq A\otimes_\Z\Q\), which is discrete; hence \(H_i(LD_A(I))=0\) for \(i\ge 1\). 
Thus \(LD_A(I)\) is discrete and torsion-free, and Proposition \ref{comp.12} applies.
\end{proof}

\begin{prop}
\label{comp.15}
With the notation above, the induced map
\[
LD_A(I)_p^\wedge \to D_A(I)_p^\wedge
\]
is an equivalence of filtered animated rings.
\end{prop}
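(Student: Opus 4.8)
The plan is to settle the underlying statement first and then promote it to the filtered level by reducing modulo $p$. For the underlying map of animated rings there is nothing to do beyond Proposition~\ref{comp.11}: it gives that $LD_A(I)\to D_A(I)$ is an equivalence of animated rings, and derived $p$-completion preserves equivalences, so $LD_A(I)_p^\wedge\to D_A(I)_p^\wedge$ is an equivalence of animated rings. What remains is to check that this equivalence is compatible with the divided power filtrations $\Fil^{\ge\bullet}$ on both sides.

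Since $p$-completion of a filtered object is performed degreewise, each term $\Fil^{\ge m}LD_A(I)_p^\wedge$ and $\Fil^{\ge m}D_A(I)_p^\wedge$ is derived $p$-complete, so by derived Nakayama applied separately in each filtration degree it suffices to show that the levelwise reduction modulo $p$,
\[
LD_A(I)_p^\wedge/p\longrightarrow D_A(I)_p^\wedge/p,
\]
is an equivalence of filtered animated rings. By \eqref{comp.11.1} the source is $LD_{A/p}(I/p)$ as a filtered animated ring, and it is discrete by \cite[Corollary~3.79]{MR4851406}. Since $D_A(I)$ is torsion-free by Proposition~\ref{comp.11}, hence flat over $\Z_{(p)}$, the target is the ordinary quotient $D_A(I)/pD_A(I)$ with the induced divided power filtration, again discrete. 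The universal property of the divided power envelope produces a filtered comparison map $D_{A/p}(I/p)\to D_A(I)/pD_A(I)$, which is an underlying equivalence (both sides being identified with $LD_{A/p}(I/p)$ via the already-established unfiltered statement) and hence a filtered equivalence; under these identifications the displayed map becomes the canonical comparison $LD_{A/p}(I/p)\to D_{A/p}(I/p)$.

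Finally, I would invoke Proposition~\ref{comp.12} for the pair $(A/p,I/p)\in\Pair_{\F_p}$ to conclude that $LD_{A/p}(I/p)\to D_{A/p}(I/p)$ is an equivalence of filtered animated rings. Its discreteness hypothesis for $LD_{A/p}(I/p)$ is \cite[Corollary~3.79]{MR4851406}, exactly as in Proposition~\ref{comp.11}; and its hypothesis that $L_{((A/p)/(I/p))/(A/p)}$ has Tor amplitude concentrated in degree $-1$ follows from the assumption that $L_{(A/I)/A}$ has $p$-complete Tor amplitude concentrated in degree $-1$, transported through the standard cotangent-complex base change along $A\to A/p$ (which is well behaved since $A$ and $A/I$ are $p$-torsion-free), $p$-complete Tor amplitude becoming honest Tor amplitude once $p$ is killed. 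The step I expect to demand the most care is precisely the middle one --- verifying that the mod-$p$ reduction of the $p$-completed comparison map really is the characteristic-$p$ comparison map, i.e.\ that the classical divided power envelope and its filtration are compatible with reduction modulo $p$ in this situation --- while the rest is a matter of assembling Propositions~\ref{comp.11} and \ref{comp.12} together with \eqref{comp.11.1}.
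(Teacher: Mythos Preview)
Your proposal is correct and follows essentially the same route as the paper: reduce modulo $p$, identify the source with $LD_{A/p}(I/p)$ via \eqref{comp.11.1}, identify the target with $D_{A/p}(I/p)$, and then apply Proposition~\ref{comp.12} to the pair $(A/p,I/p)$. The only place the paper is more explicit is precisely the step you flagged as delicate: the filtered identification $D_A(I)/p\cong D_{A/p}(I/p)$ is obtained there from \cite[\href{https://stacks.math.columbia.edu/tag/07KG}{Tag~07KG(2)}]{stacks-project} together with the concrete description \eqref{comp.0.2} of the PD-filtration, rather than from an abstract universal-property argument.
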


\begin{proof}
It suffices to show that \(LD_A(I)/p^r \to D_A(I)/p^r\) is an equivalence of filtered complexes for all \(r\ge 1\). 
By induction, it is enough to treat \(r=1\). 
By \cite[\href{https://stacks.math.columbia.edu/tag/07KG}{Tag 07KG(2)}]{stacks-project} there is a ring isomorphism
\[
D_A(I)/p \cong D_{A/p}(I/p)
\]
and, using \eqref{comp.0.2}, this is an isomorphism of filtered rings. 
Together with \eqref{comp.11.1}, we reduce to showing that \(LD_{A/p}(I/p)\to D_{A/p}(I/p)\) is an equivalence of filtered animated rings. 
Since
\(LD_{A/p}(I/p)\) is discrete by \cite[Corollary 3.79]{MR4851406} and
\(L_{(A/I)/A}\) has \(p\)-complete Tor amplitude in degree \(-1\), Proposition \ref{comp.12} applies to \((A/p,I/p)\), and the claim follows.
\end{proof}

\section{Derived log de Rham cohomology}
\label{section:Derived log de Rham cohomology}

In this section we construct and analyze the derived log de Rham complex in the logarithmic setting. 
We begin by recalling the notion of a log connection and its flatness, introducing the Frobenius descent log connection for log $\F_p$-algebras. 
Using the divided power envelope $D_A(I)$, we show that it carries a natural flat log connection compatible with both the Hodge and conjugate filtrations. 
We then define \emph{log Cartier smooth} $\F_p$-algebras as those for which the log cotangent complex is discrete and the inverse Cartier map is an isomorphism, and we verify this property for a large class of examples, including ind-log smooth and log regular objects. 
Under these assumptions, we prove that the complexes $D_A(I)\otimes_A\Omega_{(A,M)/\F_p}^\bullet$ are left Kan extended from the category of standard log pairs. 
Finally, we establish a comparison theorem identifying the $p$-completed derived log de Rham cohomology with the $p$-completed divided power envelope tensored with the logarithmic de Rham complex, generalizing the crystalline–de Rham comparison of \cite{zbMATH07729746} to the logarithmic setting.
\vspace{0.1in}

Let \(R\) be a ring, \((A, M)\) a log \(R\)-algebra, and \(D\) an \(A\)-module.  
A \emph{log connection on \(D\)} is an \(R\)-linear homomorphism
\[
\nabla \colon D \to D \otimes_A \Omega_{(A,M)/R}^1
\]
satisfying the product rule
\[
\nabla(ax) = x \otimes da + a \nabla(x)
\]
for all \(a \in A\) and \(x \in D\).  
The connection \(\nabla\) is called \emph{flat} if the composite
\[
D \xrightarrow{\nabla} D \otimes_A \Omega_{(A,M)/R}^1 \xrightarrow{\nabla} D \otimes_A \Omega_{(A,M)/R}^2
\]
is zero, where the second map is defined by
\[
\nabla(x \otimes b) = x \otimes db + b \wedge \nabla(x)
\]
for \(b \in \Omega_{(A,M)/R}^1\) and \(x \in D\).  
In this case, one obtains the associated complex \(D \otimes_A \Omega_{(A,M)/R}^\bullet\).

\begin{exm}
A connection \(\nabla \colon D \to D \otimes_A \Omega_{A/R}^1\) induces a log connection by composition with the canonical map 
\(D \otimes_A \Omega_{A/R}^1 \to D \otimes_A \Omega_{(A,M)/R}^1\).  
Throughout this paper, we only consider log connections of this form.  
If \(\nabla\) is flat, then so is the induced log connection.
\end{exm}

\begin{df}
Let \((A, M)\) be a log \(\F_p\)-algebra, and let \(S\) denote the source of the Frobenius morphism \(\varphi \colon A \to A\).  
The differential \(d \colon A \to \Omega_{(A,M)/\F_p}^1\) is \(S\)-linear since \(d(x^p y) = x^p dy\) for any \(x, y \in A\).  
For every \(S\)-module \(D\), the \emph{Frobenius descent connection} on \(D \otimes_S A\) is given by
\[
\id \otimes d \colon D \otimes_S A \to D \otimes_S \Omega_{A/\F_p}^1
\]
The \emph{Frobenius descent log connection} on \(D \otimes_S A\) is the composite
\[
D \otimes_S A \xrightarrow{\id \otimes d} D \otimes_S \Omega_{A/\F_p}^1 \to D \otimes_S \Omega_{(A,M)/\F_p}^1
\]
where the second map is canonical.  
Both connections are flat.
\end{df}

\begin{const}
\label{comp.13}
Let \((A,M,I) \in \lPair_{\Z_{(p)}}\), and assume that \(A\) and \(D_A(I)\) are torsion-free.  
Then the differential \(d \colon A \otimes_{\Z} \Q \to \Omega_{(A \otimes_{\Z} \Q, M)/\Z_{(p)}}^1\) restricts to a flat log connection
\[
\nabla \colon D_A(I) \to D_A(I) \otimes_A \Omega_{(A,M)/\Z_{(p)}}^1
\]
satisfying
\[
\nabla\!\left(\frac{y^j}{j!}\right) = \frac{y^{j-1}}{(j-1)!} \, dy
\]
for all \(y \in I\) and \(j \ge 1\).  
The CDGA structure on \(\Omega_{(A \otimes_{\Z} \Q, M)/\Z_{(p)}}^\bullet\) restricts to one on \(D_A(I) \otimes_A \Omega_{(A,M)/\Z_{(p)}}^\bullet\).  
Moreover, the filtration \(\Fil^{\ge \bullet} D_A(I)\) satisfies Griffiths transversality:
\[
\nabla(\Fil^{\ge \bullet} D_A(I)) \subset \Fil^{\ge \bullet-1} D_A(I) \otimes_A \Omega_{(A,M)/\Z_{(p)}}^1
\]
Hence, there is an induced multiplicative filtration on \(\Omega_{(A,M)/\Z_{(p)}}^\bullet \otimes_A D_A(I)\),
\begin{equation}
\label{comp.13.3}
\Fil^{\ge \bullet} D_A(I) \xrightarrow{\nabla} \Fil^{\ge \bullet-1} D_A(I) \otimes_A \Omega_{(A,M)/\Z_{(p)}}^1 \xrightarrow{\nabla} \cdots
\end{equation}
\end{const}

\begin{const}
\label{comp.4}
Let \((B,N,J) \in \Pair_{\F_p}^\text{st}\), and let \(y_1, \ldots, y_n\) be free generators of \(J\).  
There is a flat log connection
\[
\nabla \colon D_B(J) \to D_B(J) \otimes_B \Omega_{(B,N)/\Z_{(p)}}^1
\]
defined by the product rule and by
\[
\nabla(y_i^{[j]}) = y_i^{[j-1]}
\]
for \(1 \le i \le n\) and \(j \ge 1\).  
This log connection is compatible with the conjugate filtration:
\[
\Fil_\conj^{\ge \bullet} D_B(J) \to \Fil_\conj^{\ge \bullet} D_B(J) \otimes_B \Omega_{(B,N)/\Z_{(p)}}^1
\]
Under the isomorphism of \eqref{comp.10.1}, the Frobenius descent log connection
\[
(\Gamma_{B/J}^m(J/J^2))^{(1)} \to (\Gamma_{B/J}^m(J/J^2))^{(1)} \otimes_B \Omega_{(B,N)/\F_p}^1
\]
coincides with the induced log connection
\[
\gr_\conj^{-m} D_B(J) \to \gr_\conj^{-m} D_B(J) \otimes_B \Omega_{(B,N)/\F_p}^1
\]
as in the non-logarithmic case \cite[Lemma 3.44]{1204.6560}.
\end{const}

We now introduce a logarithmic analogue of Cartier smooth \(\F_p\)-algebras \cite[\S 2]{MR4264079}.

\begin{df}
\label{comp.14}
A saturated log \(\F_p\)-algebra \((A,M)\) is \emph{log Cartier smooth} if:
\begin{enumerate}
\item[(i)] The cotangent complex \(L_{(A,M)/\F_p}\) has Tor amplitude concentrated in degree \(0\), hence is discrete and flat.
\item[(ii)] For every \(i\), the inverse Cartier map
\[
C^{-1} \colon \Omega_{(A,M)/\F_p}^i \to H^i(\Omega_{(A,M)/\F_p}^\bullet)
\]
is an isomorphism.
\end{enumerate}
\end{df}

\begin{exm}
If \(A\) is a Cartier smooth \(\F_p\)-algebra, then \((A,0)\) with the trivial monoid is log Cartier smooth.
\end{exm}

\begin{exm}
\label{comp.16}
Let \(k\) be a perfect \(\F_p\)-algebra, and let \((A,M)\) be a saturated log \(\F_p\)-algebra.  
If \((A,M)\) is ind-log smooth, i.e.\ a filtered colimit of log smooth saturated log \(k\)-algebras, then \((A,M)\) is log Cartier smooth.  
The log smooth case follows from \cite[(1.1(iii)), Corollary 8.34]{OlsCot} and \cite[2.12]{MR1293974}, since \(\Omega_{(A,M)/\F_p}^i \cong \Omega_{(A,M)/k}^i\) for all \(i\); the general case follows by passage to colimits.
\end{exm}

\begin{exm}
Let $(A,M)$ be a log regular noetherian $\F_p$-algebra.
By \cite[Corollary 5.2.10(2)]{MR4211089},
there exists an \'etale cover $A\to B$ such that $(B,M)$ is ind-log smooth.
Example \ref{comp.16} implies that $(C,M)$ is log Cartier smooth for any term $C$ appearing in the \v{C}ech nerve of $A\to B$.
Together with fppf descent and the formula
\[
L_{(A,M)/\F_p}\otimes_A C
\simeq
L_{(C,M)/\F_p}
\]
we deduce that $(A,M)$ is log Cartier smooth.
\end{exm}

\begin{rmk}
By \cite[\S 2]{MR4264079}, every valuation ring $R$ of characteristic \(p\) is Cartier smooth.  
We expect that \((R, R \setminus \{0\})\) is log Cartier smooth.
\end{rmk}

\begin{lem}
\label{comp.5}
Let \((A,M)\) be a log \(\F_p\)-algebra, and let \(I \subset A\) be an ideal.  
Assume that \((A,M)\) is log Cartier smooth, \(A\) has flat Frobenius, and \(L_{(A/I)/A}\) has Tor amplitude concentrated in \(-1\).  
Then \(D_A(I) \otimes_A \Omega_{(A,M)/\F_p}^\bullet\) can be left Kan extended from \(\lPair_{\F_p}^\text{st}\).
\end{lem}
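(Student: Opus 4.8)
The plan is to descend to the graded pieces of the conjugate filtration, thereby reducing to the non-logarithmic statements recorded in Section~\ref{section:Divided power algebras} and in \cite{1204.6560,MR4851406}. Write $F$ for the functor $(B,N,J)\mapsto D_B(J)\otimes_B\Omega_{(B,N)/\F_p}^\bullet$ on $\lPair_{\F_p}^\st$ and $LF$ for its left Kan extension. By Construction~\ref{comp.4} the flat log connection $\nabla$ preserves the conjugate filtration $\Fil_\conj^{\ge\bullet}D_B(J)$, so $F$ carries an exhaustive multiplicative filtration $\Fil_\conj^{\ge\bullet}F$ with
\[
\gr_\conj^{-m}F(B,N,J)\simeq(\Gamma_{B/J}^m(J/J^2))^{(1)}\otimes_B\Omega_{(B,N)/\F_p}^\bullet,
\]
endowed with the Frobenius descent log connection. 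Since left Kan extension commutes with colimits, $LF(A,M,I)$ inherits an exhaustive conjugate filtration with $\gr_\conj^{-m}LF(A,M,I)\simeq L(\gr_\conj^{-m}F)(A,M,I)$, and the canonical map $LF(A,M,I)\to D_A(I)\otimes_A\Omega_{(A,M)/\F_p}^\bullet$ is filtered; it suffices to show it is an equivalence on every conjugate graded piece and that the filtrations converge.

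For the graded pieces I would filter $\gr_\conj^{-m}F$ once more, by the canonical (Cartier) filtration on the de~Rham complex. Each $(B,N)$ with $(B,N,J)\in\lPair_{\F_p}^\st$ is log smooth over $\F_p$, so $\Omega_{(B,N)/\F_p}^\bullet$ is a bounded complex of flat $B$-modules whose cohomology, by the classical log Cartier isomorphism, is the flat module $(\Omega_{(B,N)/\F_p}^j)^{(1)}$ in degree $j$; hence the Frobenius descent complex of any module $D$ over the source $S$ of the Frobenius of $B$ has cohomology $D\otimes_S(\Omega_{(B,N)/\F_p}^j)^{(1)}$ in degree $j$, and the Cartier filtration is finite with these graded pieces. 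Passing to left Kan extensions and using the hypotheses: the Tor-amplitude condition on $L_{(A/I)/A}$ gives $L_{(A/I)/A}\simeq I/I^2[1]$ with $I/I^2$ flat over $A/I$, so by \eqref{comp.10.2} the left Kan extension of $J\mapsto(\Gamma_{B/J}^m(J/J^2))^{(1)}$ is the discrete module $(\Gamma_{A/I}^m(I/I^2))^{(1)}$, and $LD_A(I)\simeq D_A(I)$ by \cite[Corollary 3.79]{MR4851406} (as in Proposition~\ref{comp.11}); since $(A,M)$ is log Cartier smooth, $L_{(A,M)/\F_p}\simeq\Omega_{(A,M)/\F_p}^1$ is flat and the inverse Cartier map identifies $H^j(\Omega_{(A,M)/\F_p}^\bullet)$ with $(\Omega_{(A,M)/\F_p}^j)^{(1)}$, and since $A$ has flat Frobenius the de~Rham complex of $(A,M)$ remains a complex of flat $S$-modules with flat cohomology. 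Following the non-logarithmic identifications of \cite{1204.6560,MR4851406} this yields
\[
\gr_\conj^{-m}LF(A,M,I)\simeq(\Gamma_{A/I}^m(I/I^2))^{(1)}\otimes_A\Omega_{(A,M)/\F_p}^\bullet\simeq\gr_\conj^{-m}\bigl(D_A(I)\otimes_A\Omega_{(A,M)/\F_p}^\bullet\bigr),
\]
compatibly with the comparison map.

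For convergence, the conjugate filtration is exhaustive but not complete; however $\Fil_\conj^{\ge 1}F=0$, so each truncation $\Fil_\conj^{\ge -m}$ is assembled from the finitely many graded pieces $\gr_\conj^{-i}$, $0\le i\le m$. Induction on $m$ using the equivalence on graded pieces shows $\Fil_\conj^{\ge -m}LF(A,M,I)\xrightarrow{\ \sim\ }\Fil_\conj^{\ge -m}(D_A(I)\otimes_A\Omega_{(A,M)/\F_p}^\bullet)$ for every $m$, and passing to the colimit over the exhaustive filtration gives the lemma. The main obstacle I foresee is the bookkeeping of Frobenius twists when computing the left Kan extension of the coefficient functor $J\mapsto(\Gamma_{B/J}^m(J/J^2))^{(1)}$ together with the de~Rham factor, and verifying that the Frobenius descent complex computes cohomology as claimed; once the flat-Frobenius and log Cartier smoothness hypotheses are in force, both reduce to their non-logarithmic analogues.
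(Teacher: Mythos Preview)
Your proposal is correct and follows essentially the same strategy as the paper: reduce via the conjugate filtration on $D_A(I)$ to the Frobenius descent complexes $(\Gamma_{A/I}^m(I/I^2))^{(1)}\otimes_A\Omega_{(A,M)/\F_p}^\bullet$, then use log Cartier smoothness and the flat-Frobenius hypothesis to identify their cohomology with $(\Gamma_{A/I}^m(I/I^2))^{(1)}\otimes_A\Omega_{(A,M)/\F_p}^i$, which is manifestly left Kan extended. Your treatment is in fact more explicit than the paper's about the convergence step (the paper simply asserts the reduction to graded pieces, whereas you spell out that $\Fil_\conj^{\ge 1}=0$ and that each truncation is a finite extension, so the exhaustive colimit recovers both sides); the paper's ``equivalently, for each $i$'' is precisely your secondary Cartier filtration argument.
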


\begin{proof}
Applying the left Kan extension from Construction~\ref{comp.4} in \(\lPair_{\Z_{(p)}}^\text{st}\) and using Proposition~\ref{comp.15}, we obtain a filtration \(\Fil_\conj^{\ge \bullet} D_A(I)\) such that the log connection
\[
D_A(I) \to D_A(I) \otimes_A \Omega_{(A,M)/\F_p}^1
\]
is compatible with this filtration.  
From \eqref{comp.10.2} we have isomorphisms
\[
\gr_\conj^{-m} D_A(I) \cong \Gamma_{A/I}^m (I/I^2)^{(1)}
\]
for all \(m\), and under this identification the induced log connection agrees with the Frobenius descent log connection of Construction~\ref{comp.4}.  
It therefore suffices to show that the complexes
\[
\Gamma_{A/I}^m (I/I^2)^{(1)} \otimes_A \Omega_{(A,M)/\F_p}^\bullet
\]
can be left Kan extended from \(\lPair_{\F_p}^\text{st}\).  
Equivalently, for each \(i\),
\begin{equation}
\label{thm.1.1}
H^i\bigl(\Gamma_{A/I}^m (I/I^2)^{(1)} \otimes_A \Omega_{(A,M)/\F_p}^\bullet\bigr)
\end{equation}
can be so extended.  
Since \((A,M)\) is log Cartier smooth, \eqref{thm.1.1} identifies with
\[
\Gamma_{A/I}^m (I/I^2)^{(1)} \otimes_A \Omega_{(A,M)/\F_p}^i
\]
which can indeed be left Kan extended because both factors are.
\end{proof}

\begin{lem}
\label{comp.2}
Let \((B,N,J) \in \lPair_{\Z_{(p)}}^\text{st}\).  
Then there is a natural equivalence of filtered \(\E_\infty\)-rings
\[
D_B(J) \otimes_B \Omega_{(B,N)/\Z_{(p)}}^\bullet \simeq \Omega_{(B/J,N)/\Z_{(p)}}^\bullet
\]
\end{lem}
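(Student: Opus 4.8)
The plan is to compute both sides by resolving $J$ explicitly and comparing the associated de Rham complexes termwise. Since $(B,N,J)\in\lPair_{\Z_{(p)}}^\st$, by the very definition of $\lPair_{(R,P)}^\st$ (Example~\ref{ani.8}) we may write $B = \Z_{(p)}[\N^S\oplus\N^T\oplus\N^U]$ with $N = \N^T$ (more precisely its image) and $J = (\N^U)$ the ideal generated by the coordinates $y_1,\dots,y_n$ coming from $\N^U$; crucially these generators form a regular sequence and $D_B(J)$ is torsion-free, so Construction~\ref{comp.13} applies and equips $D_B(J)\otimes_B\Omega^\bullet_{(B,N)/\Z_{(p)}}$ with the CDGA structure and the multiplicative Hodge-type filtration~\eqref{comp.13.3}. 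First I would unwind $\Omega^1_{(B,N)/\Z_{(p)}}$: because $B$ is a monoid algebra with log structure $\N^T$, this module is free on the symbols $\dlog t$ for $t\in T$ together with $dx$ for $x\in S$ and $dy_j$ for $y_j\in U$, i.e.\ $\Omega^1_{(B,N)/\Z_{(p)}} \cong \Omega^1_{(B/J,N)/\Z_{(p)}} \oplus \bigoplus_j B\,dy_j$, compatibly with the quotient $B\to B/J$.

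The key step is then a Poincaré-lemma-style contraction. The de Rham complex $D_B(J)\otimes_B\Omega^\bullet_{(B,N)/\Z_{(p)}}$ splits, as a complex, into the tensor product of $\Omega^\bullet_{(B/J,N)/\Z_{(p)}}$ with the "divided-power Koszul complex" on the $y_j$, namely the complex with terms $\Gamma^\bullet$-type pieces and differential $\nabla(y_j^{[m]}) = y_j^{[m-1]}\,dy_j$. I would check that this auxiliary complex is a resolution of $B/J$ in degree $0$: it is the ($n$-fold tensor power of the) complex $\bigl(D_{\Z_{(p)}[y]}((y)) \xrightarrow{d} D_{\Z_{(p)}[y]}((y))\,dy\bigr)$, whose cohomology is $\Z_{(p)}[y]/(y)=\Z_{(p)}$ concentrated in degree $0$ — a direct computation since $d(y^{[m]}) = y^{[m-1]}dy$ and these are free over $\Z_{(p)}$. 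Taking tensor products over $B$ and using flatness where needed, this gives a quasi-isomorphism $D_B(J)\otimes_B\Omega^\bullet_{(B,N)/\Z_{(p)}} \xrightarrow{\sim} \Omega^\bullet_{(B/J,N)/\Z_{(p)}}$. I would then verify that this quasi-isomorphism is multiplicative (it is induced by the ring map $D_B(J)\to B/J$ killing the $y_j^{[m]}$ for $m\ge 1$) and strictly compatible with filtrations: the filtration~\eqref{comp.13.3} on the source is the convolution of the Hodge filtration on $\Omega^\bullet_{(B/J,N)/\Z_{(p)}}$ with the $\Fil^{\ge\bullet}D_B(J)$-filtration, and the contraction sends the latter isomorphically onto the trivial filtration on $B/J$, matching the Hodge filtrations on both sides. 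Naturality in $(B,N,J)\in\lPair_{\Z_{(p)}}^\st$ is automatic from functoriality of all constructions involved.

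The main obstacle I anticipate is the \emph{filtered} refinement rather than the underlying equivalence. One must pin down precisely which filtration $D_B(J)\otimes_B\Omega^\bullet_{(B,N)/\Z_{(p)}}$ carries — the paper's Construction~\ref{comp.13} provides the multiplicative filtration~\eqref{comp.13.3} built from $\Fil^{\ge\bullet}D_B(J)=\overline J^{[\bullet]}$ via Griffiths transversality — and then show that passing to associated graded pieces the contraction becomes the evident equivalence $\gr\bigl(D_B(J)\otimes\Omega^\bullet\bigr)\simeq\Omega^\bullet_{(B/J,N)/\Z_{(p)}}$, e.g.\ by checking that each $\gr^j$ of the source is acyclic for $j>0$ using the computation $\gr^{-m}_{\!\text{(PD)}}$ of the divided-power Koszul complex. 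Everything else — the module decomposition of $\Omega^1$, the splitting of the complex, the degree-$0$ cohomology computation — is routine once the generators $y_j$ of $J$ are fixed. A clean way to organize the filtered statement is to observe that both sides are $\E_\infty$-algebras in filtered $\Z_{(p)}$-modules, the map is an $\E_\infty$-map, and it is a filtered equivalence iff it is so on $\gr$, reducing to the characteristic-$p$ computation already recorded in~\eqref{comp.10.1}–\eqref{comp.10.2} and Construction~\ref{comp.4}.
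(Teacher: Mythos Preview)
Your proposal is correct and follows essentially the same route as the paper: isolate the polynomial subring $B'=\Z_{(p)}[y_1,\ldots,y_n]$ on the canonical generators of $J$, apply the divided-power Poincar\'e lemma $D_{B'}(J')\otimes_{B'}\Omega^\bullet_{B'/\Z_{(p)}}\simeq\Z_{(p)}$ (the paper cites Berthelot's \cite[Lemme~V.2.1.2]{MR384804} for this, whereas you compute it by hand), and then tensor with $\Omega^\bullet_{(B/J,N)/\Z_{(p)}}$ using the factorization $B\cong B'\otimes_{\Z_{(p)}}(B/J)$. The paper handles the filtered refinement by exactly the induction-on-generators argument you sketch; your closing suggestion to pass to characteristic $p$ via \eqref{comp.10.1}--\eqref{comp.10.2} concerns the \emph{conjugate} filtration rather than the Hodge-type filtration~\eqref{comp.13.3} at issue here, so it is not the relevant reduction, but your main argument already suffices without it.
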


\begin{proof}
Let \(B'\) be the subring of \(B\) generated by the canonical generators of \(J\), and set \(J' := J \cap B'\).  
By \cite[Lemme V.2.1.2]{MR384804}, there is a natural equivalence
\[
D_{B'}(J') \otimes_{B'} \Omega_{B'/\Z_{(p)}}^\bullet \simeq \Z_{(p)}
\]
Induction on the number of generators of \(J\) shows that this equivalence is compatible with filtrations.  
Arguing as in \cite[Lemme V.2.1.2]{MR384804}, we then obtain a natural equivalence of filtered \(\E_\infty\)-rings
\[
D_B(J) \otimes_B \Omega_{(B,N)/\Z_{(p)}}^\bullet \simeq D_{B'}(J') \otimes_{B'} \Omega_{B'/\Z_{(p)}}^\bullet \otimes_{\Z_{(p)}} \Omega_{(B/J,N)/\Z_{(p)}}^\bullet
\]
which yields the desired result.
\end{proof}

Finally, we extend \cite[Theorem~7.16]{zbMATH07729746}.  
This theorem provides a comparison between log crystalline cohomology and derived log de Rham cohomology, paralleling \cite[Theorem~7.22]{1204.6560}.

\begin{thm}
\label{comp.1}
Let \((A,M)\) be a log \(\Z_{(p)}\)-algebra, and let \(I \subset A\) be an ideal.  
Assume that \(A\) and \(A/I\) are torsion-free, \((A/p,M)\) is log Cartier smooth, \(A/p\) has flat Frobenius, and \(L_{(A/I)/A}\) has \(p\)-complete Tor amplitude concentrated in degree \(-1\).  
Then there is a natural equivalence of filtered \(\E_\infty\)-rings
\[
(L\Omega_{(A/I,M)/\Z_{(p)}})_p^\wedge \simeq D_A(I)_p^\wedge \widehat{\otimes}_A (\Omega_{(A,M)/\Z_{(p)}}^\bullet)_p^\wedge
\]
where the left-hand side carries the Hodge filtration and the right-hand side the filtration induced by \eqref{comp.13.3}.
\end{thm}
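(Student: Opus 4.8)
The plan is to reduce the statement, by left Kan extension from $\lPair_{\F_p}^{\mathrm{st}}$, to the case of a standard log pair, where both sides become explicit and comparable via Lemma~\ref{comp.2}. More precisely, both sides of the asserted equivalence are functors $\lPair_{\Z_{(p)}}^{\mathrm{an}} \to \text{(filtered }\E_\infty\text{-rings)}$ after $p$-completion, and I would like to argue that each side agrees with the left Kan extension from $\lPair_{\Z_{(p)}}^{\mathrm{st}}$ of its restriction there. For the left-hand side this is the definition of derived log de Rham cohomology together with $p$-completion, so the content is that the \emph{right}-hand side, $D_A(I)_p^\wedge \widehat{\otimes}_A (\Omega_{(A,M)/\Z_{(p)}}^\bullet)_p^\wedge$, is left Kan extended from $\lPair_{\F_p}^{\mathrm{st}}$ (equivalently from $\lPair_{\Z_{(p)}}^{\mathrm{st}}$ after $p$-completion). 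This is exactly where Lemma~\ref{comp.5} enters: under the hypotheses that $(A/p,M)$ is log Cartier smooth, $A/p$ has flat Frobenius, and $L_{(A/I)/A}$ has $p$-complete Tor amplitude in degree $-1$, Lemma~\ref{comp.5} gives that $D_{A/p}(I/p)\otimes_{A/p}\Omega_{(A/p,M)/\F_p}^\bullet$ is left Kan extended from $\lPair_{\F_p}^{\mathrm{st}}$.

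The steps I would carry out, in order: (1) Use Proposition~\ref{comp.11} (Mathew's criterion) to conclude that $D_A(I)$ is torsion-free and $LD_I(A)\simeq D_I(A)$, and Proposition~\ref{comp.15} to identify $LD_A(I)_p^\wedge \simeq D_A(I)_p^\wedge$ as filtered animated rings; this legitimizes writing $D_A(I)_p^\wedge$ in the statement and guarantees the derived and classical DP-envelopes agree after $p$-completion, so the right-hand side coincides with the left Kan extension (from standard pairs) of the classical construction. (2) Reduce mod $p$: by \cite[\href{https://stacks.math.columbia.edu/tag/07KG}{Tag 07KG(2)}]{stacks-project} and Construction~\ref{comp.13}, $D_A(I)/p \cong D_{A/p}(I/p)$ compatibly with filtrations and with the log connection, and $\Omega_{(A,M)/\Z_{(p)}}^\bullet/p \cong \Omega_{(A/p,M)/\F_p}^\bullet$, so the $p$-completed right-hand side is the derived $p$-completion of $D_{A/p}(I/p)\otimes_{A/p}\Omega_{(A/p,M)/\F_p}^\bullet$. (3) Invoke Lemma~\ref{comp.5} to see this mod-$p$ complex is left Kan extended from $\lPair_{\F_p}^{\mathrm{st}}$; hence, by devissage along $p$-power torsion, so is the $p$-completed object, matching the construction of $(L\Omega_{(A/I,M)/\Z_{(p)}})_p^\wedge$. (4) On standard pairs $(B,N,J)\in\lPair_{\Z_{(p)}}^{\mathrm{st}}$, apply Lemma~\ref{comp.2}: $D_B(J)\otimes_B\Omega_{(B,N)/\Z_{(p)}}^\bullet \simeq \Omega_{(B/J,N)/\Z_{(p)}}^\bullet$ as filtered $\E_\infty$-rings, and since $\Omega_{(B/J,N)/\Z_{(p)}}^\bullet = L\Omega_{(B/J,N)/\Z_{(p)}}$ for standard pairs, the two sides agree there. (5) Extend the equivalence from $\lPair_{\Z_{(p)}}^{\mathrm{st}}$ to our $(A,M,I)$ by left Kan extension and then $p$-complete, checking that the Hodge filtration on the left matches the filtration \eqref{comp.13.3} on the right because \eqref{comp.13.3} is, by Construction~\ref{comp.13} and Griffiths transversality, precisely the left Kan extension of the Hodge filtration on standard pairs; finally $p$-complete everything, using step (1) to replace $LD_A(I)_p^\wedge$ by $D_A(I)_p^\wedge$.

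The main obstacle I anticipate is step (3): verifying that the $p$-completed right-hand side is genuinely left Kan extended from standard pairs, i.e.\ correctly propagating the conclusion of Lemma~\ref{comp.5} (which is stated for the mod-$p$ complex $D_A(I)\otimes_A\Omega_{(A,M)/\F_p}^\bullet$) through $p$-completion and through the comparison $LD_A(I)_p^\wedge \simeq D_A(I)_p^\wedge$ without losing compatibility with the conjugate and Hodge filtrations. One has to be careful that left Kan extension, $p$-completion, and passage to the associated graded all commute in the relevant range, which relies on the Tor-amplitude hypotheses to ensure the graded pieces $\gr_\conj^{-m}$ are flat (so that $p$-completion is well-behaved) and on the boundedness coming from log Cartier smoothness to control the de Rham differentials term by term; the bookkeeping of which filtration (Hodge versus conjugate) is being left Kan extended at each stage is the delicate point. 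A secondary, more routine concern is checking that all the hypotheses of Propositions~\ref{comp.11}, \ref{comp.12}, and \ref{comp.15} are met---in particular that $A/p$ has flat Frobenius (assumed) and that $L_{(A/I)/A}$ having $p$-complete Tor amplitude in degree $-1$ is inherited along the relevant base changes---but these follow directly from the stated assumptions and the transitivity sequences.
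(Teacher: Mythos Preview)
Your proposal is correct and follows essentially the same route as the paper's proof: establish the equivalence on standard log pairs via Lemma~\ref{comp.2}, then show the right-hand side is left Kan extended from $\lPair_{\Z_{(p)}}^{\st}$ by reducing modulo~$p$ via $D_A(I)/p\cong D_{A/p}(I/p)$ and invoking Lemma~\ref{comp.5}, with the passage to $p$-completion handled by d\'evissage over $p^n$. Your explicit step~(1) invoking Propositions~\ref{comp.11} and~\ref{comp.15} just unpacks what the paper absorbs into the proof of Lemma~\ref{comp.5}; otherwise the arguments coincide.
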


\begin{proof}
If \((B,N,J) \in \lPair_{\Z_{(p)}}^\st\), then Lemma~\ref{comp.2} gives a natural equivalence
\[
(D_B(J) \otimes_B \Omega_{(B,J)/\Z_{(p)}}^\bullet)_p^\wedge \simeq (\Omega_{(B/J,N)/\Z_{(p)}}^\bullet)_p^\wedge
\]
It therefore suffices to show that \((D_A(I) \otimes_A \Omega_{(A,M)/\Z_{(p)}}^\bullet)/p^n\) can be left Kan extended from \(\lPair_{\Z_p}^\st\) for all \(n \ge 1\).  
By induction, one reduces to \(n = 1\).  
Since \cite[\href{https://stacks.math.columbia.edu/tag/07KG}{Tag 07KG(2)}]{stacks-project} provides an isomorphism \(D_A(I)/p \cong D_{A/p}(I/p)\), it remains to show that
\[
D_{A/p}(I/p) \otimes_{A/p} \Omega_{(A/p,M)/\F_p}^\bullet
\]
can be left Kan extended from \(\lPair_{\Z_{(p)}}^\st\), which follows from Lemma~\ref{comp.5}.
\end{proof}

\section{Log syntomic cohomology of \texorpdfstring{$(k[x]/x^e,\N)$}{k[x]/xe,N)}}
\label{section:Log syntomic cohomology}

In this section we compute the logarithmic syntomic cohomology of \((k[x]/x^e,\N)\) for a perfect \(\F_p\)-algebra \(k\).
Building on Mathew’s and Sulyma’s results for the non-logarithmic case, we establish an explicit description of the complexes
$$\Z_p^\syn(i)(k[x]/x^e,\N)$$ in terms of Witt vectors, giving an analogue of the classical formulas with logarithmic differentials.
From this computation we deduce a corresponding formula for the homotopy groups of the logarithmic topological cyclic homology \(\logTC(k[x]/x^e,\N)_p^\wedge\), showing that they decompose canonically into Witt components.
\vspace{0.1in}

We then formulate the conjecture that \(\Q_p^\syn(i) := \Z_p^\syn(i)\otimes\Q\) defines a strict cdh sheaf on the category of fs log schemes over \(\F_p\).
Evidence for this conjecture is provided by three results: 
(1) \(\Q_p^\syn(i)\) is already a cdh sheaf on ordinary schemes;
(2) it satisfies nil-invariance in the logarithmic setting, as shown for the log point \((k[x]/x^e,\N)\);
and (3) it obeys strict cdh descent for the basic square relating \((k[x],\N)\), \((k,\N)\), and their underlying schemes.
Finally, using this descent and the analogy with Kato–Nakayama spaces, we show that \(\Q_p^\syn(i)\) is invariant under the inclusion \(\Spec(k,\N)\hookrightarrow \G_m^{\log}\), reflecting a type of homotopy invariance peculiar to log geometry.
\vspace{0.1in}

Throughout this section, let \(k\) be a perfect \(\F_p\)-algebra.
Mathew computed the syntomic cohomology of \(k[x]/x^e\) in the case \(e = 2\) and \(p > 2\) \cite[Theorem~10.4]{zbMATH07729746}, and Sulyma subsequently extended this computation to all values of \(e\) and \(p\) \cite[Theorem~1.1]{MR4632370}.
The following theorem establishes a logarithmic analogue of these results.

\begin{thm}
\label{logfat.1}
For every integer \(e \ge 1\), there is an equivalence of complexes
\begin{equation}
\label{logfat.1.1}
\Z_p^\syn(i)(k[x]/x^e,\N)
\simeq
\begin{cases}
W(k) \oplus W(k)[-1] & \text{if } i = 0 \\[2mm]
\W_{ei-1}(k)[-1] \oplus W(k)[-1] \oplus W(k)[-2] & \text{if } i = 1 \\[2mm]
\W_{ei-1}(k)[-1] & \text{if } i > 1
\end{cases}
\end{equation}
where the log structure homomorphism \(\N \to k[x]/x^e\) sends \(n \mapsto x^n\), and \(\W_j(k)\) denotes the ring of big Witt vectors \cite{MR3316757} truncated at \(\{1,\ldots,j\}\).
\end{thm}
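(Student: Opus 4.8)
The plan is to reduce the logarithmic computation to the non-logarithmic one of Sulyma \cite{MR4632370} via the saturated descent formula of Proposition~\ref{sat.3}, and then to track the $\N$-grading carefully. First I would note that $(k[x]/x^e,\N)$ is a saturated log quasi-syntomic log ring with $M=\N$ free, so Proposition~\ref{sat.3} applies: writing $A=k[x]/x^e$ and $\Z\langle\N\rangle=\Z_p\langle t\rangle$ mapping $t\mapsto x$, we have $M_{\perf}=\Z_{(p)}[1/p]_{\geq 0}$ (the non-negative part), $M_\perf^\gp/M^\gp=\Z[1/p]/\Z$, and $M_{\perf(n)}\cong M_\perf\oplus(\Z[1/p]/\Z)^{\oplus(n-1)}$. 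Thus
\[
\cPrism_{(A,\N)}\simeq\lim_{n\in\Delta^\op}\cPrism_{A\widehat\otimes_{\Z_p\langle t\rangle}\Z_p\langle M_{\perf(n)}\rangle},
\]
compatibly with Nygaard filtrations, hence also
\[
\Z_p^\syn(i)(A,\N)\simeq\lim_{n\in\Delta^\op}\Z_p^\syn(i)\bigl(A\widehat\otimes_{\Z_p\langle t\rangle}\Z_p\langle M_{\perf(n)}\rangle\bigr).
\]
The rings appearing on the right are of the form $(k[x]/x^e)\widehat\otimes_{k[t]}k[M_{\perf(n)}]$; the $n=0$ term is $k[x^{1/p^\infty}]/x^e$, i.e.\ the perfection of $k[x]/x^e$ in the $x$-direction, and the higher terms adjoin extra copies of the ``circle'' $k\langle \Z[1/p]/\Z\rangle$.

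Next I would identify $\cPrism$ and its Nygaard filtration on each term. The key point is that $A\widehat\otimes_{\Z_p\langle t\rangle}\Z_p\langle M_\perf\rangle$ should be a quasiregular semiperfectoid-type ring whose prismatic cohomology is computed by the same $q$-de~Rham / divided-power techniques used by Sulyma; indeed, adjoining a $p$-th root tower of $x$ is exactly the cyclotomic-type cover that linearizes the computation. The extra factors $k\langle\Z[1/p]/\Z\rangle^{\otimes(n-1)}$ contribute, after $\cPrism$, a Koszul/cosimplicial resolution whose totalization picks out the $\N$-homotopy-invariant part plus a shift — this is the mechanism by which the free log structure produces the extra $W(k)[-1]$ and $W(k)[-2]$ summands for $i=0,1$. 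Concretely, I expect a cofiber sequence relating $\Z_p^\syn(i)(k[x]/x^e,\N)$ to $\Z_p^\syn(i)(k[x]/x^e)$ (the non-logarithmic syntomic cohomology, computed by Sulyma as $\W_{ei-1}(k)[-1]$ for $i\geq 1$ and $W(k)$ for $i=0$) and to $\Z_p^\syn(i-1)(k[x]/x^e)[-1]$ or $\Z_p^\syn(i-1)(k,\N)[-1]$, coming from the $d\log$-class of the log structure; splitting this sequence (using that $k$ is perfect, so everything is a sum of Witt modules with no extension problems) yields \eqref{logfat.1.1}. One must separately check the low-weight anomalies: for $i=0$ the Nygaard filtration is trivial so $\Z_p^\syn(0)=\fib(\cPrism\xrightarrow{\varphi-1}\cPrism)$, and the free log structure contributes an honest $H^1$ giving $W(k)\oplus W(k)[-1]$; for $i=1$ the $d\log t$ term lands in weight $1$ and produces the additional $W(k)[-1]\oplus W(k)[-2]$ on top of $\W_{e-1}(k)[-1]$.

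Alternatively — and this may be cleaner — I would use Theorem~\ref{sat.8} / Theorem~\ref{comp.1} to identify each $\cPrism_{A/p\widehat\otimes\cdots}$ with a $p$-completed derived log de~Rham complex of a $\delta$-ring lift (e.g.\ $\Z_p\langle t\rangle$ itself, or $W(k)[t]$ with $\delta(t)=0$), reducing the syntomic complex to the de~Rham–Witt / $TR$-style computation; the logarithmic de~Rham complex of $(W(k)[t],\langle t\rangle)$ differs from the classical one of $W(k)[t]$ exactly by the $d\log t$ generator, making the extra summands transparent, and the truncation at $x^e$ is handled by the same Nygaard-filtration analysis Sulyma performs. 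The main obstacle I anticipate is bookkeeping the filtered/graded structure through the cosimplicial limit in Proposition~\ref{sat.3}: one must verify that the limit is eventually constant (or that the higher cosimplicial terms contribute only in a controlled range of degrees) so that the totalization is a finite extension of the identified pieces, and that no nontrivial $d_1$ or higher differentials in the Bousfield–Kan spectral sequence obstruct the splitting. Since $k$ is perfect all the relevant groups are $p$-torsion-free $W(k)$-modules or finite-length big-Witt truncations, so once the spectral sequence degenerates the extensions split for $W(k)$-module reasons; establishing that degeneration — presumably by a weight or a connectivity argument, comparing with the known non-logarithmic answer — is the crux of the proof.
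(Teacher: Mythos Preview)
Your ``alternative'' approach is in fact the paper's approach, and it is far more direct than your primary proposal. The paper applies Theorems~\ref{sat.8} and~\ref{comp.1} with $A=W(k)[x]$, $I=(x^e)$, and the log structure $\N\to W(k)[x]$, $1\mapsto x$, to identify $\cPrism_{(k[x]/x^e,\N)}$ (as a filtered complex) with the $p$-completion of the explicit two-term complex
\[
\bigoplus_{n\ge 0} W(k)\,\frac{x^n}{\lfloor n/e\rfloor!}
\xrightarrow{\ d\ }
\bigoplus_{n\ge 0} W(k)\,\frac{x^n}{\lfloor n/e\rfloor!}\,d\log x,
\]
with Nygaard filtration given by $\Fil_\rN^{\ge i}$ replacing each $W(k)$ by $p^{\max(i-\lfloor n/e\rfloor,0)}W(k)$ in degree~$0$ and $p^{\max(i-\lfloor n/e\rfloor-1,0)}W(k)$ in degree~$1$. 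This decomposes the syntomic complex as a direct sum over $n$ of fibers of $\varphi/p^i-\can$ on tiny two-term complexes. The $n=0$ summand is handled by hand and produces all the $W(k)$ terms in \eqref{logfat.1.1}; for $n>0$ one has $H^1\cong W(k)/n$ (resp.\ $W(k)/pn$ on the filtered piece when $\lfloor n/e\rfloor<i$), and the maps $\can$ and $\varphi/p^i$ are identified with explicit scalars using \cite[Lemma~2.10]{MR4632370}. Summing over $p$-power multiples of each $j$ prime to $p$ then yields $\W_{ei-1}(k)$ via its $p$-typical decomposition. No cosimplicial limit, no spectral sequence, no extension problems.

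Your primary route through Proposition~\ref{sat.3} is not wrong in principle---saturated descent is how Theorem~\ref{sat.8} is \emph{proved}---but once you have Theorem~\ref{sat.8} in hand there is no reason to re-run the cosimplicial limit at the level of syntomic complexes. The expected cofiber sequence linking $\Z_p^\syn(i)(k[x]/x^e,\N)$ to the non-logarithmic $\Z_p^\syn(i)(k[x]/x^e)$ is not established in the paper and would itself require work; the concern you raise about Bousfield--Kan degeneration is exactly the complexity that the de~Rham--divided-power identification bypasses. In short: commit to your second idea, write the complex down, and compute.
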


\begin{proof}
Set \(A := W(k)[x]\) and \(I := (x^e)\).
By Theorems~\ref{sat.8} and~\ref{comp.1}, there is an equivalence of filtered complexes
\[
\cPrism_{(k[x]/x^e,\N)} \simeq D_A(I)_p^\wedge \widehat{\otimes}_A (\Omega_{(A,\N)/\Z_{(p)}}^\bullet)_p^\wedge
\]
where the left-hand side carries the Nygaard filtration \(\Fil_\rN^{\ge \bullet}\), and the right-hand side carries the tensor product of the filtration induced by~\eqref{comp.13.3} and the \(p\)-adic filtration on \(\Z_p\).
The right-hand side is the \(p\)-completion of
\[
W(k)\bigl[x, \tfrac{x^e}{1!}, \tfrac{x^{2e}}{2!}, \ldots\bigr]
\xrightarrow{d}
W(k)\bigl[x, \tfrac{x^e}{1!}, \tfrac{x^{2e}}{2!}, \ldots\bigr]\, d\log x
\]
or equivalently of
\[
\bigoplus_{n=0}^\infty W(k)\,\frac{x^n}{\lfloor n/e \rfloor!}
\xrightarrow{d}
\bigoplus_{n=0}^\infty W(k)\,\frac{x^n}{\lfloor n/e \rfloor!}\, d\log x
\]

Let \((\cPrism_{(k[x]/x^e,\N)})_n\) denote the \(n\)-th direct summand.
For each integer \(i \ge 0\), we can express \(\Fil_\rN^{\ge i}\cPrism_{(k[x]/x^e,\N)}\) as the \(p\)-completion of
\[
\bigoplus_{n=0}^\infty p^{\max(i-\lfloor n/e \rfloor,0)}W(k)\,\frac{x^n}{\lfloor n/e \rfloor!}
\xrightarrow{d}
\bigoplus_{n=0}^\infty p^{\max(i-\lfloor n/e \rfloor-1,0)}W(k)\,\frac{x^n}{\lfloor n/e \rfloor!}\, d\log x
\]
Write \((\Fil_\rN^{\ge i}\cPrism_{(k[x]/x^e,\N)})_n\) for its \(n\)-th direct summand.

\paragraph{The case \(n = 0\).}
We compute
\[
\bigl(H^0(\Fil_\rN^{\ge i}\cPrism_{(k[x]/x^e,\N)})_0
\xrightarrow{\varphi/p^i - \can}
H^0(\cPrism_{(k[x]/x^e,\N)})_0\bigr)
\cong
\fib\bigl(W(k) \xrightarrow{1-p^i} W(k)\bigr) 
\]
\[
\simeq
\begin{cases}
W(k) \oplus W(k)[-1] & i = 0 \\[1mm]
0 & i > 0
\end{cases}
\]
and
\[
\bigl(H^1(\Fil_\rN^{\ge i}\cPrism_{(k[x]/x^e,\N)})_0
\xrightarrow{\varphi/p^i - \can}
H^1(\cPrism_{(k[x]/x^e,\N)})_0\bigr)
\simeq
\begin{cases}
0 & i = 0 \\[1mm]
W(k) \oplus W(k)[-1] & i = 1 \\[1mm]
0 & i > 1
\end{cases}
\]
These contribute the \(W(k)\oplus W(k)[-1]\) and \(W(k)[-1]\oplus W(k)[-2]\) summands in~\eqref{logfat.1.1}.

\paragraph{The case \(n > 0\).}
We have
\[
H^1(\cPrism_{(k[x]/x^e,\N)})_n \cong W(k)/n \qquad
H^1(\Fil_\rN^{\ge i}\cPrism_{(k[x]/x^e,\N)})_n \cong
\begin{cases}
W(k)/pn & \lfloor n/e \rfloor < i \\[1mm]
W(k)/n & \lfloor n/e \rfloor \ge i
\end{cases}
\]
Under these identifications,
\[
\can \cong
\begin{cases}
p^{i-\lfloor n/e \rfloor -1} : W(k)/pn \to W(k)/n & \lfloor n/e \rfloor < i \tag{$\ast_1$} \\[1mm]
1 : W(k)/n \to W(k)/n & \lfloor n/e \rfloor \ge i
\end{cases}
\]
and by \cite[Lemma~2.10]{MR4632370},
\[
\varphi/p^i \cong
\begin{cases}
1 : W(k)/pn \to W(k)/pn & \lfloor n/e \rfloor < i \tag{$\ast_2$} \\[1mm]
p^{-i+\lfloor n/e \rfloor +1} : W(k)/n \to W(k)/pn & \lfloor n/e \rfloor \ge i 
\end{cases}
\]

Let \(j\) be a positive integer prime to \(p\), and set
\[
s := \max\bigl(\lceil \log_p(ei/j) \rceil, 0\bigr)
\]
the smallest nonnegative integer such that \(\lfloor p^s j / e \rfloor \ge i\).
Then
\[
\varphi/p^i - \can :
\bigoplus_{m=s}^\infty H^1(\Fil_\rN^{\ge i}\cPrism_{(k[x]/x^e,\N)})_{p^m j}
\to
\bigoplus_{m=s}^\infty H^1(\cPrism_{(k[x]/x^e,\N)})_{p^m j}
\]
is an isomorphism: injectivity follows from \((\ast_1)\), and surjectivity from \((\ast_2)\).
Similarly, using \((\ast_2)\) and \(H^1(\cPrism_{(k[x]/x^e,\N)})_j = 0\), we find that
\[
\varphi/p^i - \can :
\bigoplus_{m=0}^{s-1} H^1(\Fil_\rN^{\ge i}\cPrism_{(k[x]/x^e,\N)})_{p^m j}
\to
\bigoplus_{m=0}^s H^1(\cPrism_{(k[x]/x^e,\N)})_{p^m j}
\]
is an isomorphism.
Hence the kernel of
\[
\varphi/p^i - \can :
\bigoplus_{m=0}^\infty H^1(\Fil_\rN^{\ge i}\cPrism_{(k[x]/x^e,\N)})_{p^m j}
\to
\bigoplus_{m=0}^\infty H^1(\cPrism_{(k[x]/x^e,\N)})_{p^m j}
\]
is \(W(k)/p^{(s-1)+1} \cong W_{\lceil \log_p(ei/j) \rceil}(k)\).
Summing over all \(j\) prime to \(p\) gives the \(p\)-typical decomposition of \(\W_{ei-1}(k)\) \cite[Proposition~1.10]{MR3316757}.
\end{proof}

Recalling \cite[Theorem~1.3, Section~7]{BLPO2}, for a log quasi-regular semiperfectoid \((A,M)\) we define
\[
\Fil_\BMS^{\le i}\logTC(A,M)_p^\wedge := \tau_{\le 2i}\logTC(A,M)_p^\wedge
\]
By log quasi-syntomic descent \cite[Theorem~4.22]{BLPO2}, this yields a complete exhaustive filtration for any log quasi-syntomic \((A,M)\), with graded pieces
\[
\gr_\BMS^i \logTC(A,M)_p^\wedge \simeq \Z_p^\syn(i)(A,M)[2i]
\]

\begin{cor}
\label{logfat.2}
For every integer \(e \ge 1\),
\[
\pi_i\bigl(\logTC(k[x]/x^e,\N)_p^\wedge\bigr) \cong
\begin{cases}
W(k) & i = -1 \\[1mm]
W(k) \oplus W(k) & i = 0 \\[1mm]
W(k) \oplus \W_{e-1}(k) & i = 1 \\[1mm]
\W_{em-1}(k) & i = 2m-1 \ge 3 \\[1mm]
0 & \text{otherwise}
\end{cases}
\]
\end{cor}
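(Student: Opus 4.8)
The plan is to derive the corollary from Theorem~\ref{logfat.1} by running the motivic (BMS) spectral sequence of the complete exhaustive filtration $\Fil_\BMS^{\le\bullet}\logTC(k[x]/x^e,\N)_p^\wedge$ recalled above, for which $\gr_\BMS^i\simeq\Z_p^\syn(i)(k[x]/x^e,\N)[2i]$. Completeness, exhaustiveness, and the fact that only finitely many graded pieces contribute in each total degree yield a strongly convergent spectral sequence
\[
E_1^{i,n}=\pi_n\bigl(\gr_\BMS^i\bigr)\cong H^{2i-n}\bigl(\Z_p^\syn(i)(k[x]/x^e,\N)\bigr)\Longrightarrow\pi_n\bigl(\logTC(k[x]/x^e,\N)_p^\wedge\bigr).
\]
By Theorem~\ref{logfat.1} (and $\Z_p^\syn(i)=0$ in weights $i<0$, visible from the explicit model in its proof), this $E_1$-page is very sparse: $\gr^0$ contributes $W(k)$ in total degrees $0$ and $-1$; $\gr^1$ contributes $W(k)$ in degree $0$ and $W(k)\oplus\W_{e-1}(k)$ in degree $1$; and for $m\ge2$ the piece $\gr^m$ sits in the single degree $2m-1$ and contributes $\W_{em-1}(k)$. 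Reassembling these terms after degeneration reproduces the stated answer, so the whole problem is to prove degeneration and to see that the one possible extension, in degree $0$, is trivial.

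A $d_r$ lowers the weight by $r$ and the total degree by $1$, so the support pattern above forces every $d_r$ to vanish for parity and support reasons, except possibly the two $d_1$'s out of $\gr^1$, namely $\pi_1(\gr^1)\to\pi_0(\gr^0)$ and $\pi_0(\gr^1)\to\pi_{-1}(\gr^0)$, which can only affect total degrees $-1,0,1$. Since there is no weight $<0$, all of $E_1^{0,\ast}$ consists of $d_r$-cycles, so $d_1$ is a derivation over $E_1^{0,0}=H^0(\Z_p^\syn(0)(k[x]/x^e,\N))=W(k)$ and hence $W(k)$-linear; exactly as in the non-logarithmic truncated-polynomial computations of Hesselholt--Madsen and Sulyma, its restriction to the torsion summand $\W_{e-1}(k)\subseteq\pi_1(\gr^1)$ vanishes, since there is no nonzero $W(k)$-linear map from $p$-power torsion to the torsion-free module $W(k)$. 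What is genuinely new in the logarithmic case --- and what I expect to be the only real obstacle --- is the vanishing of $d_1$ on the two \emph{free} summands $W(k)\subseteq\pi_1(\gr^1)$ and $W(k)=\pi_0(\gr^1)$, both carried by the class of $d\log x$.

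Here I would proceed as follows. The free summand of $\pi_1(\gr^1)=H^1(\Z_p^\syn(1)(k[x]/x^e,\N))$ is generated by the syntomic class of $d\log x$, which lifts --- via the $d\log$ map $M^\gp\to\pi_1\logTHH$, which is cyclotomic, or via the logarithmic trace $\trc_{\log}$ --- to a genuine element of $\pi_1\logTC(k[x]/x^e,\N)_p^\wedge$ whose leading term has motivic weight $1$; hence it is a permanent cycle and, by $W(k)$-linearity, the first $d_1$ vanishes on the whole free summand. For the second $d_1$ I would use multiplicativity together with the ``$x$-weight'' decomposition $\cPrism_{(k[x]/x^e,\N)}\simeq\bigoplus_{n\ge0}C_n$ from the proof of Theorem~\ref{logfat.1}, which is compatible with the Nygaard filtration, $\varphi$ and $\can$ and hence refines the motivic filtration: both suspect differentials lie in the $n=0$ summand, governed by the small explicit filtered object $C_0=W(k)\oplus W(k)\,d\log x$ (with $\Fil_\rN^{\ge i}C_0=p^{\max(i,0)}W(k)\oplus p^{\max(i-1,0)}W(k)\,d\log x$ and the Frobenius recorded there), and one computes $\pi_{-1},\pi_0,\pi_1$ of the $n=0$ part of $\logTC$ directly, as the fiber of $\varphi-\can$ on the attached filtered $\logTC^-/\logTP$; this forces the second $d_1$ to vanish and identifies the degree-$0$ extension as the split $W(k)^{\oplus2}$. (It is quite plausible that the authors skip the spectral sequence and carry out this $\varphi-\can$ computation globally over all $n$ from the explicit presentation of $\cPrism_{(k[x]/x^e,\N)}$.)

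Granting degeneration, the spectral sequence collapses at $E_1$, every total degree other than $0$ carries a single graded contribution so there is nothing further to extend, and reassembling the surviving terms --- $W(k)$ in degrees $0$ and $-1$ from $\gr^0$; $W(k)\oplus\W_{e-1}(k)$ in degree $1$ and $W(k)$ in degree $0$ from $\gr^1$; $\W_{em-1}(k)$ in degree $2m-1$ from $\gr^m$ for $m\ge2$ --- yields the asserted values of $\pi_i(\logTC(k[x]/x^e,\N)_p^\wedge)$.
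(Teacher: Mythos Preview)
Your approach is essentially the paper's: read the graded pieces of the BMS filtration off Theorem~\ref{logfat.1} and reassemble. The paper's proof is much terser than yours. It simply asserts that ``Theorem~\ref{logfat.1} provides all cases except $i=0$'' and then writes down the short exact sequence in degree~$0$, implicitly taking the vanishing of both possible $d_1$'s for granted; you are right that this is exactly where the argument needs a word, and your analysis of why only those two differentials can be nonzero is correct. Your torsion argument for the $\W_{e-1}(k)$ summand and the $d\log$-lift/permanent-cycle argument for the free $W(k)\subset\pi_1(\gr^1)$ are the standard moves here and are fine.

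Where the paper is cleaner is the extension problem in degree~$0$. Rather than a direct $\varphi-\can$ computation on the $n=0$ summand, the paper observes that $\logTC(k[x]/x^e,\N)_p^\wedge$ is a module over $\TC(k)_p^\wedge$, cites \cite[Propositions~6.2,~6.3]{BMS19} for $\pi_0(\TC(k)_p^\wedge)\cong W(k)$, and concludes that the short exact sequence
\[
0 \to \pi_0\bigl(\Z_p^\syn(1)[2]\bigr) \to \pi_0\bigl(\logTC(k[x]/x^e,\N)_p^\wedge\bigr) \to \pi_0\bigl(\Z_p^\syn(0)\bigr) \to 0
\]
is one of $W(k)$-modules with free quotient, hence splits. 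This $W(k)$-linearity is also what makes your torsion argument work, so you may as well invoke it directly for the splitting too, rather than computing by hand. Your parenthetical guess---that the authors bypass the filtration and compute $\varphi-\can$ globally---is not what they do; they use the filtration, just without commenting on degeneration.
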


\begin{proof}
Theorem~\ref{logfat.1} provides all cases except \(i = 0\), where there is an exact sequence
\[
0 \to
\pi_0\bigl(\Z_p^\syn(1)(k[x]/x^e,\N)[2]\bigr)
\to
\pi_0\bigl(\logTC(k[x]/x^e,\N)_p^\wedge\bigr)
\to
\pi_0\bigl(\Z_p^\syn(0)(k[x]/x^e,\N)\bigr)
\to 0
\]
By \cite[Propositions~6.2, 6.3]{BMS19}, \(\pi_i(\TC(k)_p^\wedge) \cong W(k)\) for \(i = -1,0\) and \(0\) otherwise.
Hence this sequence is one of \(W(k)\)-modules and splits, since \(W(k)\) is free over itself.
\end{proof}

For a topology \(\tau\) on the category of schemes, the \emph{strict \(\tau\)-topology} on fs log schemes is generated by families \(\{f_j : U_j \to X\}_{j \in J}\) with each \(f_j\) strict and \(\{\ul{f_j}\}\) a \(\tau\)-cover.
As recalled in \cite[Section~3]{BLMP}, one can globalize \(\Z_p^\syn(i)\) from log quasi-syntomic log rings to log quasi-syntomic fs \(p\)-adic formal log schemes.
This leads naturally to the following expectation.

\begin{conje}
For every integer \(i\), the complex \(\Q_p^\syn(i) := \Z_p^\syn(i) \otimes \Q\) is a strict cdh sheaf on the category of quasi-compact, quasi-separated fs log schemes over \(\F_p\).
\end{conje}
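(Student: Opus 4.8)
As this is stated as a conjecture, we indicate the strategy rather than a complete argument. A presheaf on qcqs fs log schemes over $\F_p$ is a strict cdh sheaf precisely when it satisfies strict Nisnevich descent and sends every \emph{strict abstract blow-up square}
\[
\begin{tikzcd}
E\ar[r]\ar[d] & Y\ar[d]\\ Z\ar[r] & X
\end{tikzcd}
\]
--- all four maps strict, $Y\to X$ proper, $Z\hookrightarrow X$ a closed immersion, and $\ul Y\setminus\ul E\to\ul X\setminus\ul Z$ an isomorphism --- to a pullback square. Strict Nisnevich (indeed strict \'etale) descent for $\Z_p^\syn(i)$, and hence for $\Q_p^\syn(i)$, is already available: the globalization of \cite[\S 3]{BLMP} is built from the strict \'etale sheafification of $\cPrism$, which inherits fpqc descent from prismatic and $\TC^-$ descent, and rationalization is harmless for this finite-limit condition. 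So the whole content is the pullback property for strict abstract blow-up squares --- and it is essential that we have rationalized: integrally $\Z_p^\syn(i)$ is not expected to satisfy cdh descent, since it detects nilpotents and other non-reduced phenomena that rationalization kills (cf.\ Theorem~\ref{logfat.1}).

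The plan for the blow-up squares is a d\'evissage on the combinatorial complexity of the log structures, organized around the three pieces of evidence recorded above. Working strict-\'etale- and cdh-locally, one reduces to the case where $X$, and with it $Y,Z,E$, sit strictly inside log smooth log schemes over $\pt_\N$; in characteristic $p$ this toroidalization step must be run via Gabber's or de Jong's alterations in place of a functorial log resolution. One then inducts on $\dim X$ and on the number of boundary divisors. When the log structure along the square is Zariski-locally pulled back and constant along $Z\hookrightarrow X$, the square is an abstract blow-up square of underlying $\F_p$-schemes, where fact (1) --- that $\Q_p^\syn(i)$ is a cdh sheaf on ordinary $\F_p$-schemes --- applies directly; nilpotent thickenings of the boundary are discarded using the nil-invariance of $\Q_p^\syn(i)$ visible in Theorem~\ref{logfat.1} (fact (2)). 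The purely logarithmic input is the effect of creating or removing a single boundary divisor: near a generic point of a divisor the ambient log smooth scheme looks like $X'\times(\A^1,\N)$, and the model square relating $(k[x],\N)$, $(k,\N)$ and their underlying schemes (fact (3)), combined with the invariance of $\Q_p^\syn(i)$ under $\Spec(k,\N)\hookrightarrow\G_m^{\log}$, lets one trade that divisor for a $\G_m^{\log}$-direction and then conclude by the $\square$-invariance and representability results of the second half of the paper. Iterating these reductions should collapse an arbitrary strict abstract blow-up square to a finite combination of the three model situations.

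The main obstacle is exactly this passage from the models to the general case, and it has two faces. First, the d\'evissage unavoidably passes through \emph{non-strict} log modifications (log blow-ups, saturations), which a strict cdh sheaf a priori does not see; running the reduction therefore presupposes that $\Q_p^\syn(i)$ also satisfies dividing descent, i.e.\ that it extends to a genuine object of the log motivic category over $\F_p$ rather than only over $\pt_\N$ and for log smooth schemes. Establishing this global representability --- equivalently, showing that saturated descent (Proposition~\ref{sat.3}) is compatible with cdh gluing for fs log schemes that are not log quasi-syntomic --- is the crux. Second, even the non-logarithmic ingredient (1) has to be upgraded to insensitivity, rationally, of $\Q_p^\syn(i)$ under the finite alterations produced by characteristic-$p$ resolution, a cdh-descent statement known for schemes through rigid cohomology but not yet transported to the logarithmic setting. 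Until both points are settled the statement remains a conjecture, with facts (1)--(3) and the $\G_m^{\log}$-invariance as the available evidence.
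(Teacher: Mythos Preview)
The statement is a \emph{conjecture}, and the paper does not prove it. Immediately after stating it, the paper only records evidence: that $\Q_p^\syn(i)$ is already a cdh sheaf on ordinary qcqs $\F_p$-schemes, the nil-invariance computation $\Q_p^\syn(i)(k[x]/x^e,\N)\simeq\Q_p^\syn(i)(k,\N)$, and descent for the single strict cdh square relating $(k[x],\N)$, $(k,\N)$, and their underlying schemes, followed by the $\G_m^{\log}$-invariance corollary. Your proposal correctly identifies exactly this evidence as facts (1)--(3) and the $\G_m^{\log}$-invariance, and correctly flags that the statement remains open.

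Where you go beyond the paper is in sketching a d\'evissage strategy (toroidalization via alterations, induction on dimension and number of boundary divisors, trading divisors for $\G_m^{\log}$-directions) and in naming the obstructions (dividing descent for $\Q_p^\syn(i)$ over all fs log $\F_p$-schemes rather than only log smooth ones over $\pt_\N$; rational insensitivity to the alterations needed in characteristic $p$). The paper attempts none of this. Your outline is plausible as a program, though one should note that the local model ``log smooth over $\pt_\N$'' is too restrictive in general---an fs log scheme over $\F_p$ can have charts by monoids of arbitrary rank, so the d\'evissage would need to run over higher-rank log points as well, and the three pieces of evidence the paper supplies are all rank-one phenomena. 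Since neither you nor the paper offers a proof, there is no complete argument to compare; your ``Towards a proof'' is an honest summary of where the problem stands, consistent with---and somewhat more ambitious than---the paper's own treatment.
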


Let us provide evidence for this conjecture. First, for every integer \(i\),
\(\Q_p^\syn(i)\) is a cdh sheaf on the category of quasi-compact, quasi-separated schemes over \(\F_p\) 
by \cite[Theorem 4.20]{EM}.
Next, we give a nil-invariance example for the log point case.

\begin{prop}
For all integers \(e\ge 1\) and \(i\), the morphism
\[
\Q_p^\syn(i)(k[x]/x^e,\N)\to \Q_p^\syn(i)(k,\N)
\]
naturally induced by the quotient homomorphism \(k[x]/x^e\to k\) is an equivalence.
\end{prop}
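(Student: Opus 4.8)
The plan is to leverage the explicit computation of Theorem~\ref{logfat.1}, which gives $\Z_p^\syn(i)(k[x]/x^e,\N)$ as a direct sum indexed by monomial degrees $n \ge 0$, together with the $e = 1$ specialization that computes $\Z_p^\syn(i)(k[x],\N) \simeq \Z_p^\syn(i)(k,\N)$ after restricting to the degree-zero summand. Concretely, the quotient homomorphism $k[x]/x^e \to k$ sends $x \mapsto 0$, so at the level of the filtered complex $D_A(I)_p^\wedge \widehat{\otimes}_A (\Omega_{(A,\N)/\Z_{(p)}}^\bullet)_p^\wedge$ from the proof of Theorem~\ref{logfat.1}, it corresponds (up to the identifications made there) to projection onto the $n = 0$ summand $\bigl(\cPrism_{(k[x]/x^e,\N)}\bigr)_0 \cong W(k) \xrightarrow{d} W(k)\, d\log x$, which is precisely $\cPrism_{(k,\N)}$ with its Nygaard filtration. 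First I would make this compatibility precise: the map $\N \to k$ sending $n \mapsto 0$ (for $n>0$; $0 \mapsto 1$) makes $(k,\N)$ a log ring whose prismatic cohomology, by the same Theorems~\ref{sat.8} and~\ref{comp.1} applied to $A = W(k)$, $I = (0)$, is exactly the $n=0$ summand above.

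Second, after rationalizing, the task reduces to showing that the contributions of all summands with $n > 0$ vanish in $\Q_p^\syn(i)$. From the computation in Theorem~\ref{logfat.1}, the $n>0$ part of $\Z_p^\syn(i)(k[x]/x^e,\N)$ is concentrated in degree $1$ and equals $\W_{ei-1}(k)[-1]$ (for $i \ge 1$; it is zero for $i = 0$ since there is no $n>0$ contribution there). The group $\W_{ei-1}(k)$ is a product of truncated $p$-typical Witt rings $W_{\lceil \log_p(ei/j)\rceil}(k)$ over $j$ prime to $p$, each of which is a \emph{bounded} $p^\infty$-torsion group — indeed $p^N$ annihilates $W_N(k)$. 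Hence $\W_{ei-1}(k) \otimes \Q = 0$, so the entire $n>0$ part dies after tensoring with $\Q$. The analogous vanishing for $(k,\N)$ is trivial since there $\W_{ei-1}$ does not appear. Thus $\Q_p^\syn(i)(k[x]/x^e,\N) \simeq \Q_p^\syn(i)(k,\N)$, and one checks that the induced map is the equivalence just described (it is surjective onto the $n=0$ part and its kernel is the torsion $n>0$ part, which vanishes rationally).

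Third, I would verify that the natural map induced by $k[x]/x^e \to k$ indeed realizes this decomposition — i.e., that functoriality of $\Z_p^\syn(i)$ in log quasi-syntomic morphisms is compatible with the monomial-degree splitting. This follows because the equivalences of Theorems~\ref{sat.8} and~\ref{comp.1} are natural, and the map $W(k)[x] \to W(k)$ (with $x \mapsto 0$) induces on divided-power envelopes the projection $D_{W(k)[x]}((x^e))_p^\wedge \to W(k)$ killing all $x^n/\lfloor n/e\rfloor!$ with $n>0$; passing to the de Rham complexes and then to the syntomic fibers gives the claimed map. The compatibility with the case $e=1$ (where $k[x] \to k$, $x \mapsto 0$ gives the same target) identifies the image with $\Q_p^\syn(i)(k,\N)$.

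\textbf{Main obstacle.} The one point requiring genuine care is the \emph{compatibility of the functorially induced map with the monomial-degree decomposition} — in particular, confirming that under the identification of Theorem~\ref{logfat.1}, the morphism $\Z_p^\syn(i)(k[x]/x^e,\N) \to \Z_p^\syn(i)(k,\N)$ is exactly projection onto the $n=0$ summand rather than some twisted or diagonal map, and that the target $n=0$ summand is canonically $\Z_p^\syn(i)(k,\N)$ with its own Nygaard filtration (not merely abstractly isomorphic to $W(k) \oplus W(k)[-1]$). This amounts to tracking the naturality of the chain of equivalences (Theorem~\ref{sat.8}, then Theorem~\ref{comp.1}, then the explicit $D_A(I) \otimes \Omega^\bullet$ description) through the map of log rings $(W(k)[x],\N) \to (W(k),\N)$; each individual equivalence is natural by construction, so this is a matter of careful bookkeeping rather than a new idea, but it is where the proof must be written out rather than asserted. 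Everything else — the vanishing of $\W_{ei-1}(k) \otimes \Q$ and the identification of the $n=0$ part — is immediate from the cited computations.
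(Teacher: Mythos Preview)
Your proposal is correct and takes essentially the same approach as the paper: both use the monomial-degree decomposition from the proof of Theorem~\ref{logfat.1}, observe that the $n>0$ summands are bounded $p^\infty$-torsion (hence vanish after $\otimes\,\Q$), and identify the induced map with the identity on the surviving $n=0$ summand. The paper is terser on precisely your ``main obstacle''---it simply asserts that under the identification the map $\Q_p^\syn(i)(k[x]/x^e,\N)\to \Q_p^\syn(i)(k[x]/x,\N)$ is the identity---so your naturality discussion is exactly the bookkeeping the paper leaves implicit.
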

\begin{proof}
For every integer \(n\), the proof of Theorem \ref{logfat.1} yields
\begin{align*}
&
\fib\!\Bigl(\Fil_\rN^{\ge i}(\cPrism_{(k[x]/x^e,\N)})_n
\xrightarrow{\varphi-\can}
(\cPrism_{(k[x]/x^e,\N)})_n\Bigr)\otimes \Q
\\[2pt]
\simeq\;&
\begin{cases}
\fib\!\bigl(W(k)\otimes \Q \xrightarrow{0} W(k)\otimes \Q\bigr) & i=0\\
\fib\!\bigl(W(k)\otimes \Q\, d\log x[1] \xrightarrow{0} W(k)\otimes \Q\, d\log x[1]\bigr) & i=1\\
0 & \text{otherwise}
\end{cases}
\end{align*}
Under this identification, the morphism
\(\Q_p^\syn(i)(k[x]/x^e,\N)\to \Q_p^\syn(i)(k[x]/x,\N)\) is the identity.
\end{proof}

Consider the cartesian square of fs log schemes
\[
\begin{tikzcd}
\Spec(k,\N)\ar[d]\ar[r] &
\Spec(k[x],\N)\ar[d,"q"] \\
\Spec(k)\ar[r,"i_0"] &
\Spec(k[x])
\end{tikzcd}
\]
which is a strict cdh square, where \(i_0\) is the zero section, \(q\) forgets the log structure, and the homomorphism \(\N\to k[x]\) sends \(1\mapsto x\). We show that \(\Q_p^\syn(i)\) satisfies descent for this square.

\begin{prop}
\label{logfat.4}
For every integer \(i\), the naturally induced square
\[
\begin{tikzcd}
\Q_p^\syn(i)(k[x])\ar[d]\ar[r] &
\Q_p^\syn(i)(k)\ar[d] \\
\Q_p^\syn(i)(k[x],\N)\ar[r] &
\Q_p^\syn(i)(k,\N)
\end{tikzcd}
\]
is cartesian.
\end{prop}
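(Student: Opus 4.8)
The plan is to extract both vertical fibers of the square from a single ``forget the log structure'' cofiber sequence and to check that the two resulting descriptions are compatible: a commutative square of spectra is cartesian as soon as the induced map on the fibers of its two vertical arrows is an equivalence. Throughout I use the local model $A:=W(k)[x]$ with $\delta(x)=0$, so that $1\mapsto x$ is a $\delta$-ring map $\Z_p\langle\N\rangle\to A$; applying Theorems~\ref{sat.8} and~\ref{comp.1} (for the log point, the $e=1$ instance of the computation in Theorem~\ref{logfat.1} together with Lemma~\ref{comp.2}) identifies
\[
\cPrism_{k[x]}\simeq(\Omega_{A/\Z_p}^\bullet)_p^\wedge,\qquad
\cPrism_{(k[x],\N)}\simeq(\Omega_{(A,\N)/\Z_p}^\bullet)_p^\wedge,
\]
\[
\cPrism_{k}\simeq\bigl(D_A((x))\otimes_A\Omega_{A/\Z_p}^\bullet\bigr)_p^\wedge,\qquad
\cPrism_{(k,\N)}\simeq\bigl(D_A((x))\otimes_A\Omega_{(A,\N)/\Z_p}^\bullet\bigr)_p^\wedge
\]
as filtered $\E_\infty$-rings with Frobenius, in such a way that the two forgetful maps $\cPrism_{\underline X}\to\cPrism_X$ become the canonical maps of de~Rham complexes coming from $\Omega^1_{A/\Z_p}=A\,dx\to\Omega^1_{(A,\N)/\Z_p}=A\,d\log x$, $dx\mapsto x\,d\log x$, and the horizontal maps (induced by $k[x]\to k$, $x\mapsto 0$) become the maps induced by $A\hookrightarrow D_A((x))$.

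Over $\A^1$ this cofiber is clean: since $x$ is a non-zero-divisor in $W(k)\langle x\rangle$, the cofiber of $(\Omega_{A/\Z_p}^\bullet)_p^\wedge\to(\Omega_{(A,\N)/\Z_p}^\bullet)_p^\wedge$ is the cokernel $(A/xA)\,d\log x=W(k)\,d\log x$, concentrated in cohomological degree $1$ with zero differential. On it the quotient Nygaard filtration places $d\log x$ in weight $1$ and the Frobenius acts by $\varphi(d\log x)=p\,d\log x$, so as a filtered Frobenius module it is $\cPrism_k$ with the Nygaard index lowered by one and the Frobenius multiplied by $p$, shifted into degree $1$. Since $\Z_p^\syn(i)(-)=\fib\bigl(\Fil_\rN^{\ge i}(-)\xrightarrow{\varphi/p^i-\can}(-)\bigr)$ is exact in the filtered Frobenius module and sends this twist--shift to the corresponding twist--shift, applying it to the cofiber sequence gives a natural fiber sequence
\[
\Z_p^\syn(i)(k[x])\longrightarrow\Z_p^\syn(i)(k[x],\N)\longrightarrow\Z_p^\syn(i-1)(k)[-1].
\]

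The same computation over the log point shows that the cofiber of $\cPrism_k\to\cPrism_{(k,\N)}$ is $\bigl(D_A((x))/xD_A((x))\bigr)d\log x$ in degree $1$; here $D_A((x))/xD_A((x))=W(k)\,\overline{\gamma_0}\oplus\bigoplus_{n\ge 1}(W(k)/n)\,\overline{\gamma_n}$, and using $\varphi(\gamma_n(x))=\tfrac{(pn)!}{n!}\gamma_{pn}(x)$ together with $v_p\bigl(\tfrac{(pn)!}{n!}\bigr)=n$ one checks that $\varphi$ annihilates $\overline{\gamma_n}$ for every $n\ge 1$ and that the induced filtration respects the decomposition. Hence this cofiber is $\bigl(\cPrism_k\ \text{(same twist--shift)}\ \oplus\ T\bigr)[-1]$ with $T:=\bigoplus_{n\ge 1}(W(k)/n)\overline{\gamma_n}$ a torsion $W(k)$-module carrying $\varphi|_T=0$; applying $\Z_p^\syn(i)$ yields $\Z_p^\syn(i-1)(k)[-1]\oplus\fib\bigl(\Fil_\rN^{\ge i}T\xrightarrow{-\can}T\bigr)[-1]$, whose second summand is a complex of torsion $W(k)$-modules and so vanishes after $\otimes\Q$ (the same mechanism as in the proof of Theorem~\ref{logfat.1}: $\varphi/p^i-\can$ is an isomorphism off a finite-rank piece). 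We therefore obtain, rationally, a fiber sequence $\Q_p^\syn(i)(k)\to\Q_p^\syn(i)(k,\N)\to\Q_p^\syn(i-1)(k)[-1]$, consistent with the $e=1$ case of Theorem~\ref{logfat.1} and with $\Q_p^\syn(i)(k)=0$ for $i\ge 1$. Under $A\hookrightarrow D_A((x))$ the generating class $d\log x$ of the first cofiber maps to $\overline{\gamma_0}\,d\log x$, i.e.\ into the torsion-free summand, so the map of cofiber sequences induced by $x\mapsto 0$ is, after $\otimes\Q$, the identity of $\Q_p^\syn(i-1)(k)[-1]$ on the third terms. Given a map of fiber sequences $(A\to B\to C)\to(A'\to B'\to C')$ for which $C\to C'$ is an equivalence, the square formed by the first two terms is cartesian; here that square is exactly the transpose of the square in the Proposition, so the Proposition follows.

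\textbf{Main obstacle.} The delicate point is the log-point step: one must pin down the divided-power torsion $T$ in $\cofib(\cPrism_k\to\cPrism_{(k,\N)})$ --- both to see that $\Z_p^\syn(i)(T[-1])$ is torsion (hence rationally zero) and to see that the comparison map $x\mapsto 0$ lands in the torsion-free summand $\cPrism_k\,\text{(twist--shift)}$. A secondary, bookkeeping point is keeping track of naturality so that the comparison cube built from $\Spec(k,\N)\hookrightarrow\Spec(k[x],\N)$ and $\Spec k\hookrightarrow\Spec k[x]$, together with all the identifications from Sections~\ref{section:Saturated Descent for the Free Monoid Case} and~\ref{section:Derived log de Rham cohomology}, genuinely commutes.
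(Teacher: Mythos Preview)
Your argument is correct and follows essentially the same strategy as the paper: both use the de Rham models from Theorems~\ref{sat.8} and~\ref{comp.1} and analyze the cofiber of the forget-log-structure map, which over $\A^1$ is exactly the fiber sequence $\cPrism_{k[x]}\to\cPrism_{(k[x],\N)}\to W(k)\,d\log x[-1]$ appearing in the paper. The only organizational difference is that the paper computes three of the four corners separately (using the degree-by-degree analysis from the proof of Theorem~\ref{logfat.1} for $\Q_p^\syn(i)(k,\N)$ and a direct computation for $\Z_p^\syn(i)(k)$) and then matches summands, whereas you compute the cofiber of the \emph{right} vertical map directly as $(W(k)\oplus T)[-1]$ and kill the torsion $T$ rationally. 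Your identification $\Z_p^\syn(i)\bigl(W(k)\,d\log x[-1]\bigr)\simeq\Z_p^\syn(i-1)(k)[-1]$ is a clean way to package what the paper obtains by explicit casework; conversely, the paper's degree-by-degree bookkeeping makes the filtration on the quotient transparent, which is the point you flag as your main obstacle. One small remark: your parenthetical ``the same mechanism as in the proof of Theorem~\ref{logfat.1}'' is misleading---the vanishing you need is simply that $T=\bigoplus_{n\ge 1}W(k)/n$ is torsion, not the $\varphi/p^i-\can$ isomorphism argument from that proof.
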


\begin{proof}
By Theorems \ref{sat.8} and \ref{comp.1}, we may write \(\cPrism_{k[x]}\), \(\cPrism_{(k[x],\N)}\), and \(\cPrism_k\) as the \(p\)-completions of
\begin{gather*}
W(k)[x] \xrightarrow{d} W(k)[x]\, dx \\
W(k)[x] \xrightarrow{d} W(k)[x]\, d\log x \\
W(k)[x,x^2/2!,\ldots] \xrightarrow{d} W(k)[x,x^2/2!,\ldots]\, dx
\end{gather*}
Let \((\cPrism_{k[x]})_n\), \((\cPrism_{(k[x],\N)})_n\), and \((\cPrism_k)_n\) be the degree \(n\) summands, where \(d\log x\) has degree \(0\) and \(dx\) has degree \(1\). Denote the corresponding Nygaard filtrations by \(\Fil_\rN^{\ge \bullet}(\cPrism_{k[x]})_n\), \(\Fil_\rN^{\ge \bullet}(\cPrism_{(k[x],\N)})_n\), and \(\Fil_\rN^{\ge \bullet}(\cPrism_k)_n\). From the proof of Theorem \ref{logfat.1} we have
\begin{align*}
&
\fib\!\Bigl(\Fil_\rN^{\ge i}(\cPrism_{(k,\N)})_n \xrightarrow{\varphi-\can} (\cPrism_{(k,\N)})_n\Bigr)\otimes_\Z \Q
\\[2pt]
\simeq\;&
\begin{cases}
W(k)\otimes \Q \oplus W(k)\otimes \Q[-1] & i=n=0\\
W(k)\otimes \Q[-1] \oplus W(k)\otimes \Q[-2] & i=1,\ n=0\\
0 & \text{otherwise}
\end{cases}
\end{align*}
There is a fiber sequence
\[
\cPrism_{k[x]} \longrightarrow \cPrism_{(k[x],x)} \longrightarrow (0 \to W(k)\, d\log x)
\]
and hence an equivalence
\begin{align*}
&
\fib\!\Bigl(\fib\!\bigl(\Fil_\rN^{\ge i}(\cPrism_{k[x]})_n \xrightarrow{\varphi-\can} (\cPrism_{k[x]})_n\bigr)
\to
\fib\!\bigl(\Fil_\rN^{\ge i}(\cPrism_{(k[x],\N)})_n \xrightarrow{\varphi-\can} (\cPrism_{(k[x],\N)})_n\bigr)\Bigr)
\\[2pt]
\simeq\;&
\begin{cases}
W(k)[-1]\oplus W(k)[-2] & i=1,\ n=0\\
0 & \text{otherwise}
\end{cases}
\end{align*}
A direct computation gives
\[
\fib\!\bigl(\Fil_\rN^{\ge i}(\cPrism_k)_n \xrightarrow{\varphi-\can} (\cPrism_k)_n\bigr)
\simeq
\begin{cases}
W(k)\oplus W(k)[-1] & i=n=0\\
0 & \text{otherwise}
\end{cases}
\]
Combining these identifications and canceling the matching summands proves the claim. 
\end{proof}

The Kato-Nakayama realizations of $\Spec(k,\N)$ and $\G_{m}^{\log}:=(\P^1,0+\infty)$ are homeomorphic to $S^1$ and $S^1 \times [0,1]$ if $k=\C$ as shown in Figure \ref{fig},
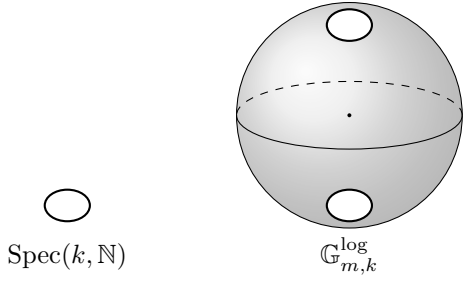
\begin{figure}[tbp]
\centering
\begin{tikzpicture}[scale = 0.75]
  \filldraw[fill=white, thick] (0,-1.6) ellipse (0.4cm and 0.28cm);
  \node at (0,-2.5) {$\Spec(k,\N)$};
\begin{scope}[shift={(5,0)}]
  \shade[ball color = gray!40, opacity = 0.4] (0,0) circle (2cm);
  \draw (0,0) circle (2cm);
  \draw (-2,0) arc (180:360:2 and 0.6);
  \draw[dashed] (2,0) arc (0:180:2 and 0.6);
  \fill[fill=black] (0,0) circle (1pt);
  \filldraw[fill=white, thick] (0,1.6) ellipse (0.4cm and 0.28cm);
  \filldraw[fill=white, thick] (0,-1.6) ellipse (0.4cm and 0.28cm);
    \node at (0,-2.5) {$\G_{m,k}^{\log}$};
\end{scope}
\end{tikzpicture}
\caption{Kato-Nakayama realizations.}
\label{fig}
\end{figure}
which are homotopy equivalent.
This is an ``extra homotopy equivalence'' in logarithmic geometry different from the $\A^1$-homotopy equivalence $\A^1 \to \Spec(k)$.
We show that $\Q_p^\syn(i)$ is invariant under the inclusion $\Spec(k,\N)\to \G_{m}^{\log}$ at zero as follows:

\begin{cor}
\label{logfat.6}
The naturally induced morphism
\[
\Q_p^\syn(i)(\G_m^{\log})\to \Q_p^\syn(i)(k,\N)
\]
is an equivalence for every integer \(i\).
\end{cor}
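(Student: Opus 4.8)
The plan is to reduce the claim about $\G_m^{\log}$ to the strict cdh square of Proposition~\ref{logfat.4} together with the $\A^1$-invariance of rational syntomic cohomology on ordinary schemes. First I would cover $\G_m^{\log} = (\P^1, 0+\infty)$ by the two standard affine charts isomorphic to $(\A^1,\N)$ (with the log structure at the origin) glued along $\G_m = \Spec k[x,x^{-1}]$ with trivial log structure. Since $\Q_p^\syn(i)$ extends to a (strict) Zariski sheaf on log quasi-syntomic fs log schemes by the globalization recalled before the Conjecture, there is a Mayer--Vietoris fiber sequence
\[
\Q_p^\syn(i)(\G_m^{\log})
\to
\Q_p^\syn(i)(\A^1,\N) \oplus \Q_p^\syn(i)(\A^1,\N)
\to
\Q_p^\syn(i)(\G_m),
\]
where on the right the two charts restrict to $\G_m$ with trivial log structure. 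Thus it suffices to understand $\Q_p^\syn(i)(\A^1,\N)$ and $\Q_p^\syn(i)(\G_m)$, compare them to the syntomic cohomology of $\Spec k$ and $\Spec(k,\N)$, and check the resulting map to $\Q_p^\syn(i)(k,\N)$ is an equivalence.

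The key input is Proposition~\ref{logfat.4}: the square relating $\Q_p^\syn(i)$ of $k[x]$, $k$, $(k[x],\N)$, $(k,\N)$ is cartesian. Combining this with the fact that $\Q_p^\syn(i)$ is a cdh sheaf on $\F_p$-schemes (hence $\A^1$-invariant, so $\Q_p^\syn(i)(k[x]) \simeq \Q_p^\syn(i)(k)$ and $\Q_p^\syn(i)(\G_m) \simeq \Q_p^\syn(i)(k) \oplus \Q_p^\syn(i-1)(k)[-1]$ by the projective bundle / localization computation on schemes), the cartesianness forces $\Q_p^\syn(i)(k[x],\N) \simeq \Q_p^\syn(i)(k,\N)$, i.e. the $\A^1$-chart already has the syntomic cohomology of the log point. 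Concretely, from the proof of Proposition~\ref{logfat.4} one reads off that $\fib\big(\Q_p^\syn(i)(k[x],\N) \to \Q_p^\syn(i)(k,\N)\big)$ is the same as $\fib\big(\Q_p^\syn(i)(k[x]) \to \Q_p^\syn(i)(k)\big)$, which vanishes by $\A^1$-invariance on schemes. Feeding $\Q_p^\syn(i)(\A^1,\N) \simeq \Q_p^\syn(i)(k,\N)$ and $\Q_p^\syn(i)(\G_m) \simeq \Q_p^\syn(i)(k,\N)$ (the latter because $\G_m$ with trivial log structure, viewed over the log point, has $\Q_p^\syn(i)(\G_m) \simeq \Q_p^\syn(i)(k) \oplus \Q_p^\syn(i-1)(k)[-1]$, and the two copies of $\Q_p^\syn(i)(k,\N) \simeq \Q_p^\syn(i)(k)\oplus\Q_p^\syn(i-1)(k)[-1]$ map to it by maps that become the identity on each summand after the gluing) into the Mayer--Vietoris sequence and tracking that the difference map is, up to sign, $(a,b)\mapsto a-b$, one obtains $\Q_p^\syn(i)(\G_m^{\log}) \simeq \Q_p^\syn(i)(k,\N)$, with the restriction to the zero section $\Spec(k,\N)\hookrightarrow\G_m^{\log}$ being the identity under this identification.

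The main obstacle I expect is bookkeeping the gluing maps carefully enough to see that the difference map in the Mayer--Vietoris sequence is an equivalence onto $\Q_p^\syn(i)(\G_m)$ with the correct fiber: one must check that the $d\log x$-generator contributed by each affine chart (the $\Q_p^\syn(i-1)(k)[-1]$ summand) matches the corresponding summand in $\Q_p^\syn(i)(\G_m)$ with the right sign, so that the two charts do not ``double count'' and the fiber is exactly $\Q_p^\syn(i)(k,\N)$ rather than something larger. This is essentially the same $d\log x$ vs.\ $dx$ degree-tracking that appears in the proof of Proposition~\ref{logfat.4}, now on the overlap $\G_m$; once that compatibility is in place, the conclusion is immediate. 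An alternative, perhaps cleaner, route would be to apply strict cdh descent directly to the square obtained by base-changing the square of Proposition~\ref{logfat.4} along $\Spec k[y] \to \Spec k$ and using $\A^1$-invariance twice, but the Mayer--Vietoris approach is the most transparent and reuses the computation already in hand.
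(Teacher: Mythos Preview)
Your Mayer--Vietoris approach on $\G_m^{\log}$ is a different route from the paper's, and while it can likely be completed, there are two genuine gaps.

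First, the implication ``cdh sheaf, hence $\A^1$-invariant'' is false as stated: $K$-theory, for instance, is a cdh sheaf on noetherian $\F_p$-schemes but is not $\A^1$-invariant on singular ones. The equivalence $\Q_p^\syn(i)(k[x]) \simeq \Q_p^\syn(i)(k)$ you need does hold, but it requires independent justification---for example, by reading off from the presentation of $\cPrism_{k[x]}$ in the proof of Proposition~\ref{logfat.4} that all $n>0$ summands are torsion, or by invoking the identification of $\Q_p^\syn(i)$ with rationalized logarithmic de~Rham--Witt on smooth $\F_p$-schemes.

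Second, the step you yourself flag as the main obstacle---tracking the difference map in the Mayer--Vietoris sequence---is genuinely the crux and is not carried out. You must show that each restriction $\Q_p^\syn(i)(\A^1,\N) \to \Q_p^\syn(i)(\G_m)$ is an equivalence and then identify the fiber of the difference map with a single copy of $\Q_p^\syn(i)(k,\N)$, compatibly with the zero-section map. This involves matching the $d\log x$ class across the two charts (where it changes sign under $x\mapsto x^{-1}$), and while the outcome is as you predict, nothing you have written forces it.

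The paper avoids both issues with a different device: it inserts an intermediate column to form
\[
\begin{tikzcd}
\Spec(k,\N)\ar[d]\ar[r]& \Spec(k[x],\N)\ar[d]\ar[r]& \G_m^{\log}\ar[d]\\
\Spec(k)\ar[r]& \Spec(k[x])\ar[r]& \square
\end{tikzcd}
\]
with $\square=(\P^1,\infty)$. The left square is cartesian by Proposition~\ref{logfat.4}. The right square is shown to be cartesian by Zariski-localizing on $\square$: over each of the three charts, either the horizontal or the vertical arrows are isomorphisms, so cartesianness is trivial. Since $\Z_p^\syn(i)$ is $\square$-invariant (even integrally, by \cite[Corollary 3.11]{BLMP}), the bottom composite induces an equivalence on $\Q_p^\syn(i)$, and the outer cartesian square then gives the result. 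No $\A^1$-invariance on ordinary schemes and no computation of $\Q_p^\syn(i)(\G_m)$ is needed.
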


\begin{proof}
Consider the commutative diagram with cartesian squares
\begin{equation}
\label{logfat.6.1}
\begin{tikzcd}
\Spec(k,\N)\ar[d]\ar[r] &
\Spec(k[x],\N)\ar[d]\ar[r] &
\G_m^{\log}\ar[d] \\
\Spec(k)\ar[r] &
\Spec(k[x])\ar[r] &
\square
\end{tikzcd}
\end{equation}
where \(\square:=(\P^1,\infty)\). The left square is cartesian after applying \(\Q_p^\syn(i)\) by Proposition \ref{logfat.4}. Using the \(\square\)-invariance of \(\Z_p^\syn(i)\) \cite[Corollary 3.11]{BLMP}, it remains to show that the right square \(Q\) is cartesian after applying \(\Q_p^\syn(i)\). By Zariski descent, it suffices to check that
\begin{equation}
\label{logfat.6.2}
Q\times_{\square}\Spec(k[x]),\qquad
Q\times_{\square}\Spec(k[x^{-1}],\N),\qquad
Q\times_{\square}\Spec(k[x,x^{-1}])
\end{equation}
are cartesian after applying \(\Q_p^\syn(i)\). Here the log structure \(\N\to k[x^{-1}]\) sends \(1\mapsto x^{-1}\). In \eqref{logfat.6.2}, the horizontal maps in the first and third squares are isomorphisms, and the vertical maps in the second square are isomorphisms, which proves the claim.
\end{proof}

\section{Representability of log syntomic cohomology in \texorpdfstring{$\logSH(\pt_{\N})$}{logSHptNk}}
\label{section:Representability}

In this section we work over the log point $\pt_{\N}=\Spec (k,\N)$ with $k$ perfect of characteristic $p$. 
We show that several non–$\A^1$-invariant cohomology theories are representable in the logarithmic motivic stable homotopy category $\logSH(\pt_{\N})$ introduced in \cite{BPO2}.
Key results include: the homotopy-cartesian property of basic log squares (Lemma~\ref{represent.1}); 
descent for perfect complexes under dividing covers (Proposition~\ref{represent.2}); 
the identification $L_{(k,\N)/k}\simeq k\oplus k[1]$ (Proposition~\ref{represent.4}); 
and the vanishing $L_{Y/X}\simeq 0$ for log \'etale maps (Proposition~\ref{represent.7}). 
Using these, we prove that the presheaves $X\mapsto R\Gamma_\Zar(X,\wedge^i L_{X/k})$ are log \'etale sheaves and $\square$-invariant (Proposition~\ref{represent.5}), 
which implies that $\logTHH$, $\logTC^-$, $\logTP$, $\logTC$, $\cPrism$, $\Fil_\rN^{\ge i}\cPrism$, and $\Z_p^\syn(i)$ belong to $\logSH_\ketale^\eff(\pt_{\N})$ (Theorem~\ref{represent.6}). 
Finally, these theories admit representing $\P^1$-spectra in $\logSH(\pt_{\N})$
with explicitly computed homotopy groups (Proposition~\ref{logfat.5}).
This will be used in the proof of Theorem \ref{axes.3} below.

\begin{lem}
\label{represent.1}
Let $\N\to P$ be a homomorphism of fs monoids. Consider the induced cartesian square of fs log schemes
\[
Q
:=
\begin{tikzcd}
\A_P\times_{\A_\N}\pt_{\N}\ar[d]\ar[r]&
\pt_{\N}\ar[d]
\\
\A_P\ar[r]&
\A_\N
\end{tikzcd}
\]
Then the square of underlying schemes $\ul{Q}$ is homotopy cartesian in the $\infty$-category of derived schemes.
\end{lem}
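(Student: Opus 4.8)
The plan is to reduce the whole statement to a single $\operatorname{Tor}$-vanishing over $k[\N]=k[t]$. Write $t$ for the monomial of $1\in\N$, let $e\in P$ denote the image of $1$ under $\N\to P$, and let $t^{e}\in k[P]$ be the corresponding monomial, so that $\ul{\A_P}\to\ul{\A_\N}$ is $\Spec$ of $k[t]\to k[P]$, $t\mapsto t^{e}$, while $\ul{\pt_\N}\to\ul{\A_\N}$ is the origin $\Spec k\to\Spec k[t]$, $t\mapsto 0$. The first thing I would record is that $\pt_\N\to\A_\N$ is \emph{strict}: it is the strict closed immersion of the origin, since the log structure of $\A_\N$ pulls back along $\Spec k\hookrightarrow\A^{1}$ to the hollow log structure of $\pt_\N$ (both have chart $\N$ and the induced map on charts is the identity). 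Because strict morphisms are stable under fs base change and fs fiber products along a strict morphism are computed on underlying schemes, the underlying scheme of $\A_P\times_{\A_\N}\pt_\N$ is the ordinary (underived) fiber product $\ul{\A_P}\times_{\ul{\A_\N}}\ul{\pt_\N}=\Spec\bigl(k[P]/(t^{e})\bigr)$, i.e.\ the classical vanishing locus $V(t^{e})\subset\ul{\A_P}$.

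It then remains to compare this classical vanishing locus with the \emph{derived} one, that is, to show that the canonical map $k[P]/(t^{e})\to k[P]\otimes^{\mathbb L}_{k[t]}k$ is an equivalence, equivalently that $\operatorname{Tor}^{k[t]}_{j}(k[P],k)=0$ for $j\ge 1$. For this I would resolve $k=k[t]/(t)$ by the two-term Koszul complex $[\,k[t]\xrightarrow{\ \cdot t\ }k[t]\,]$; tensoring with $k[P]$ yields $k[P]\otimes^{\mathbb L}_{k[t]}k\simeq[\,k[P]\xrightarrow{\ \cdot t^{e}\ }k[P]\,]$, so $\operatorname{Tor}_{j}=0$ automatically for $j\ge 2$ and $\operatorname{Tor}_{1}=\ker(\cdot t^{e}\colon k[P]\to k[P])$. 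Since $P$ is fs, hence integral, the translation $p\mapsto p+e$ is injective on $P$, so multiplication by $t^{e}$ permutes the monomial $k$-basis of $k[P]$ injectively; hence $\operatorname{Tor}_{1}=0$ (the degenerate case $e=0$, where $t^{e}=1$, being trivial). Therefore $k[P]\otimes^{\mathbb L}_{k[t]}k$ is discrete and equal to $k[P]/(t^{e})$, which by the previous step is the underlying scheme of $\A_P\times_{\A_\N}\pt_\N$; this is precisely the assertion that $\ul{Q}$ is homotopy cartesian.

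I do not expect a genuine difficulty here. The only point that requires care is the first one: passing to underlying schemes must not reintroduce the usual discrepancy between fs and scheme-theoretic fiber products, and this is exactly what strictness of $\pt_\N\to\A_\N$ buys us; after that, the derived-versus-classical comparison is the short Koszul computation above, powered solely by integrality of the fs monoid $P$.
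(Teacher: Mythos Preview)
Your proof is correct and follows essentially the same route as the paper: identify the underlying scheme of the fs fiber product as $\Spec(k[P]/(t^e))$, then show $t^e$ is a regular element so that the classical and derived tensor products agree. Your justification via cancellativity of $P$ (injectivity of $p\mapsto p+e$ on the monomial basis) is in fact slightly cleaner than the paper's appeal to $k[P]$ being an integral domain, since the latter needs $k$ itself to be a domain whereas your argument works for any perfect $\F_p$-algebra $k$.
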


\begin{proof}
Let $f\colon k[x]=k[\N]\to k[P]$ be the induced homomorphism. The underlying scheme of $\A_P\times_{\A_\N}\pt_{\N}$ is $\Spec(k[P]/f(x))$. Since $k[P]$ is integral and $f(x)\neq 0$, the element $f(x)$ is regular. It follows that $k[P]/f(x)\simeq k[P]\otimes_{k[x]}^L (k[x]/x)$.
\end{proof}

For a fan $\Sigma$, let $\T_\Sigma$ denote the fs log scheme whose underlying scheme is the associated toric scheme over $k$ with the compactifying log structure coming from the open immersion of the maximal torus.

\begin{prop}
\label{represent.2}
Let $f\colon Y\to X$ be a dividing cover of log smooth fs log schemes over $\pt_\N$. Then for every perfect complex $\cF$ on $\ul{X}$, the unit morphism $\cF\to \ul{f}_*\ul{f}^*\cF$ is an equivalence, where $\ul{f}^*\colon \rD(\Coh(\ul{X}))\to \rD(\Coh(\ul{Y}))$ denotes the pullback between derived $\infty$-categories of coherent sheaves, and $\ul{f}_*$ is its right adjoint.
\end{prop}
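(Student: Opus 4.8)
The plan is to reduce the statement to a local, combinatorial computation on toric charts and then invoke the classical fact that a toric birational morphism which is an isomorphism in codimension $0$ induces an isomorphism on cohomology of pushed-forward perfect complexes. First I would recall that a dividing cover $f\colon Y\to X$ is, by definition, a log \'etale monomorphism that is a cover for the dividing topology; Zariski-locally on $X$ it is modeled by a subdivision of fans, so after choosing a log smooth chart $X\to \T_\Sigma$ (which exists since $X$ is log smooth over $\pt_\N$, hence Zariski-locally admits a strict map to a toric $\T_\Sigma$) we may assume $X=\T_\Sigma$ and $Y=\T_{\Sigma'}$ for a subdivision $\Sigma'$ of $\Sigma$, with $\ul{f}\colon \ul{\T_{\Sigma'}}\to\ul{\T_\Sigma}$ the associated toric morphism. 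Both the formation of $\ul{f}_*\ul{f}^*\cF$ and the statement "the unit is an equivalence" are Zariski-local on $\ul{X}$ and compatible with the \v{C}ech nerve of a chart cover, so this reduction is harmless.

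The second step is to establish the key vanishing and identification on the toric model: for a subdivision $\Sigma'\to\Sigma$ of fans, the underlying toric morphism $\pi\colon \ul{\T_{\Sigma'}}\to \ul{\T_\Sigma}$ is proper, and satisfies $R\pi_*\mathcal{O}_{\ul{\T_{\Sigma'}}}\simeq \mathcal{O}_{\ul{\T_\Sigma}}$. This is the toric analogue of the statement that a resolution of a toric variety has vanishing higher direct images of the structure sheaf, and in the smooth (log smooth $\Rightarrow$ $\Sigma$ a smooth fan) case it follows from Cox--Little--Schenck or from the fact that $\pi$ is a composition of smooth blow-ups along torus-invariant centers, each of which is a $\mathbb{P}^r$-bundle contraction with the required cohomology vanishing. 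Then by the projection formula for the proper morphism $\pi$ and a perfect complex $\cF$ on $\ul{X}$, $R\pi_*\,\pi^*\cF \simeq \cF \otimes^L_{\mathcal{O}_{\ul X}} R\pi_*\mathcal{O}_{\ul{\T_{\Sigma'}}} \simeq \cF$, and one checks that this equivalence is the unit map. Since $\ul f$ over our local model is exactly such a $\pi$ (a dividing cover is surjective on the level of fans' supports, i.e. $|\Sigma'| = |\Sigma|$), this proves the claim on charts.

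The third step is descent/gluing: if $\{U_\alpha\to \ul X\}$ is a Zariski cover by charts as above, both sides of $\cF\to \ul f_*\ul f^*\cF$ satisfy Zariski descent (the right-hand side because $\ul f_*$ commutes with restriction to opens, as $\ul f$ is separated and of finite type, so $\ul f_*\ul f^*\cF$ restricted to $U_\alpha$ is $(\ul f|_{f^{-1}U_\alpha})_*(\ul f|_{f^{-1}U_\alpha})^*(\cF|_{U_\alpha})$), hence the unit is an equivalence globally once it is on each $U_\alpha$ and their overlaps. I expect the main obstacle to be a careful justification that after passing to a strict chart $X\to \T_\Sigma$ the dividing cover $Y\to X$ genuinely becomes a base change of a fan subdivision $\T_{\Sigma'}\to \T_\Sigma$ — dividing covers are \emph{dividing}-locally of this shape but one must check that, Zariski-locally on $X$ and up to refining, a single such subdivision suffices, using that $f$ is a \emph{cover}, i.e. $|\Sigma'|=|\Sigma|$ as supports, rather than merely a subdivision of a subfan. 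This is where the hypothesis that $f$ is a dividing \emph{cover} (and not just any log \'etale map, cf.\ the separate Proposition~\ref{represent.7}) is essential. The toric cohomology vanishing $R\pi_*\mathcal{O}\simeq\mathcal{O}$ itself, while a genuine input, is standard and I would simply cite it.
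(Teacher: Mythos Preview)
Your overall strategy---reduce to charts, invoke the toric vanishing $R\pi_*\cO\simeq\cO$ for subdivisions, then apply the projection formula---matches the paper's endgame, but the reduction step has a real gap.

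You write that since $X$ is log smooth over $\pt_\N$, it ``Zariski-locally admits a strict map to a toric $\T_\Sigma$'' and that ``we may assume $X=\T_\Sigma$.'' This is where the argument breaks. Log smoothness over $\pt_\N$ gives, strict \'etale locally, a chart $\theta\colon \N\to P$ and a strict \'etale morphism $X\to \A_P\times_{\A_\N}\pt_\N$. The target here is \emph{not} a toric variety: its underlying scheme is $\Spec(k[P]/\theta(1))$, a hypersurface in $\ul{\A_P}$, and is typically singular (for $P=\N^2$ with $\theta$ the diagonal you get $\Spec(k[x,y]/(xy))$). So your claim ``log smooth $\Rightarrow$ $\Sigma$ a smooth fan, hence the classical blow-up argument applies'' does not go through: $\ul{X}$ is not smooth, is not a toric variety, and the cohomology vanishing you want to cite does not apply to it directly. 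The paper addresses exactly this point: after strict \'etale base change one reduces to $X=\A_P\times_{\A_\N}\pt_\N$, and then the crucial Lemma~\ref{represent.1} shows that the square
\[
\begin{tikzcd}
\ul{X}\times_{\ul{\A_P}}\ul{\T_\Sigma}\ar[r]\ar[d] & \ul{X}\ar[d]\\
\ul{\T_\Sigma}\ar[r] & \ul{\A_P}
\end{tikzcd}
\]
is homotopy cartesian in \emph{derived} schemes (because $\theta(1)$ is a regular element). Only then can one apply derived base change to transport the toric vanishing $\cO_{\ul{\A_P}}\xrightarrow{\sim}\ul{g}_*\ul{g}^*\cO_{\ul{\A_P}}$ along the non-flat closed immersion $\ul{X}\hookrightarrow\ul{\A_P}$. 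Without this lemma and without working in the derived category of schemes, you have no mechanism to pass from the toric vanishing to the statement on $\ul{X}$.

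There is also the secondary issue you yourself flag: a dividing cover $Y\to X$ is not a priori the base change of a single fan subdivision. The paper does not resolve this by producing such a model directly; instead it finds (via \cite[Prop.~A.11.5]{BPO}) a subdivision $\Sigma$ of $P^\vee$ such that $Y\times_{\A_P}\T_\Sigma\to X\times_{\A_P}\T_\Sigma$ becomes an \emph{isomorphism}, and then (via \cite[Lem.~A.11.3]{BPO}) covers $Y$ by pieces of the form $X\times_{\A_P}\A_{Q_i}$. This reduces the claim to maps of the explicit shape $X\times_{\A_P}\T_\Sigma\to X$, which is what the derived base change argument above handles. Your proposal gestures at this difficulty but does not resolve it.
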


\begin{proof}
Since $\cF$ is perfect, the projection formula \cite[\href{https://stacks.math.columbia.edu/tag/0B54}{Tag~0B54}]{stacks-project} yields an equivalence
\[
\ul{f}_*\ul{f}^*\cF \simeq \cF\otimes \ul{f}_*\ul{f}^*\cO_{\ul{X}}
\]
Hence we reduce to the case $\cF=\cO_{\ul{X}}$.

We may work strict \'etale locally on $X$. By \cite[Theorem~IV.3.3.1]{Ogu}, we may assume that $X\to \pt_\N$ admits a chart $\theta\colon \N\to P$ such that $P$ is fs, $\theta$ is injective, the torsion part of $\coker(\theta^\gp)$ is a $p$-torsion-free finite abelian group, and the induced morphism $X\to \A_P\times_{\A_\N}\pt_\N$ is strict \'etale.

By \cite[Proposition~A.11.5]{BPO}, there exists a subdivision $\Sigma$ of the dual cone $P^\vee$ such that the induced morphism
\[
Y\times_{\A_P}\T_\Sigma \longrightarrow X\times_{\A_P}\T_\Sigma
\]
is an isomorphism. By \cite[Lemma~A.11.3]{BPO}, $Y$ admits a Zariski cover $\{Y_i\}_{i\in I}$ such that $Y_i$ is an open subscheme of $X\times_{\A_P}\A_{Q_i}$ for some $P\to Q_i$ with $P^\gp\to Q_i^\gp$ an isomorphism. Let $\Delta_i$ be the subdivision obtained by restricting $\Sigma$ to $Q_i^\vee$. Then
\[
Y_i\times_{\A_{Q_i}}\T_{\Delta_i}\cong Y_i\times_{\A_P}\T_\Sigma
\]
Thus it suffices to show the claim for $X\times_{\A_P}\T_\Sigma\to X$ and for each $Y_i\times_{\A_{Q_i}}\T_{\Delta_i}\to Y_i$. Up to renaming, we reduce to $X\times_{\A_P}\T_\Sigma\to X$.

Since $\A_P\times_{\A_\N}\pt_\N$ is log smooth over $\pt_\N$ by \cite[Theorem~IV.3.1.8]{Ogu}, flat base change \cite[Proposition~2.5.4.5]{SAG} reduces us to $X=\A_P\times_{\A_\N}\pt_\N$.

Consider the commutative diagram of schemes with cartesian square
\[
\begin{tikzcd}
\ul{X}\times_{\ul{\A_P}}\ul{\T_\Sigma}\ar[d]\ar[r,"\ul{f}"]&
\ul{X}\ar[d,"\ul{u}"]\ar[r]&
\{0\}\ar[d]
\\
\ul{\T_\Sigma}\ar[r,"\ul{g}"]&
\ul{\A_P}\ar[r]&
\A^1
\end{tikzcd}
\]
The right and outer squares are homotopy cartesian by Lemma \ref{represent.1}, so the left square is homotopy cartesian as well. As in \cite[Proposition~9.2.4]{BPO}, the unit $\cO_{\A_P}\to \ul{g}_*\ul{g}^*\cO_{\A_P}$ is an equivalence. Applying $\ul{u}^*$ and using base change for derived schemes \cite[Proposition~2.5.4.5]{SAG} finishes the proof.
\end{proof}

\begin{rmk}
There is a gap in \cite[Proof of Theorem~6.16]{BLPO}: the argument fails if the square $Q$ in loc.\ cit.\ is not homotopy cartesian.
\end{rmk}

\begin{prop}
\label{represent.4}
We have $L_{(k,\N)/k}\simeq k \oplus k[1]$. In particular, $L_{(k,\N)/k}$ is perfect.
\end{prop}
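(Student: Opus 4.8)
The plan is to read off $L_{(k,\N)/k}$ from the transitivity fiber sequence for the log cotangent complex that already appears in the proof of Proposition~\ref{sat.5}. Writing $\Z[\N]=\Z[x]$ for the monoid algebra with its canonical log structure, the log point is classified by the map $\Z[x]\to k$, $x\mapsto 0$, and \cite[(3.5)]{BLPO} supplies a cofiber sequence
\[
k\otimes_{\Z[x]}^L L_{\Z[x]/\Z}\xrightarrow{\ \alpha\ }k\otimes_{\Z[x]}^L L_{(\Z[x],\N)/\Z}\longrightarrow L_{(k,\N)/k},
\]
so it suffices to identify the first two terms and the map $\alpha$.

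First I would note that $L_{\Z[x]/\Z}=\Omega^1_{\Z[x]/\Z}=\Z[x]\,dx$ is free of rank one, and that $\A_\N=(\Z[x],\N)$ is log smooth over $\Spec\Z$ (Kato's criterion: $0\hookrightarrow\N$ has torsion-free cokernel), so likewise $L_{(\Z[x],\N)/\Z}=\Omega^1_{(\Z[x],\N)/\Z}=\Z[x]\,d\log x$ is free of rank one. Hence both derived base changes are discrete, equal to the free $k$-modules $k\,dx$ and $k\,d\log x$. Next I would compute $\alpha$: it is induced by the canonical map $\Omega^1_{\Z[x]/\Z}\to\Omega^1_{(\Z[x],\N)/\Z}$, which sends $dx\mapsto x\,d\log x$; after base change along $x\mapsto 0$ this becomes the zero map. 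Therefore
\[
L_{(k,\N)/k}\simeq\cofib\bigl(k\,dx\xrightarrow{\ 0\ }k\,d\log x\bigr)\simeq k\,d\log x\ \oplus\ (k\,dx)[1]\simeq k\oplus k[1],
\]
which is perfect, being a finite complex of finite free $k$-modules; here the degree-zero summand is $\Omega^1_{(k,\N)/k}$ and the shifted summand reflects the imposition of the relation $x\mapsto 0$.

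The only point needing a little care is checking that the leftmost arrow of the cofiber sequence is indeed induced by the canonical map of Kähler differentials, so that it carries $dx$ to $x\,d\log x$; this is immediate from the construction of \cite[(3.5)]{BLPO}, where the sequence arises by base change from $L_{\Z[x]/\Z}\to L_{(\Z[x],\N)/\Z}\to L_{(\Z[x],\N)/\Z[x]}$. As an alternative that avoids this bookkeeping, one can use that $\pt_\N=\A_\N\times_{\A^1}\Spec k$ with underlying square homotopy cartesian (because $x$ is a nonzerodivisor, as in the proof of Lemma~\ref{represent.1}); base change for the log cotangent complex then gives $L_{(k,\N)/k}\simeq L_{(k[x],\N)/k[x]}\otimes_{k[x]}^L k$, and the same fiber sequence computes $L_{(k[x],\N)/k[x]}\simeq k[x]/(x)=k$, so $L_{(k,\N)/k}\simeq k\otimes_{k[x]}^L k\simeq k\oplus k[1]$ by the Koszul resolution.
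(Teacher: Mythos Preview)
Your proof is correct and follows essentially the same route as the paper: both compute $L_{(k,\N)/k}$ as the cofiber of the canonical map $k\,dx\to k\,d\log x$, which vanishes because $dx\mapsto x\,d\log x$ and $x\mapsto 0$. The only cosmetic differences are that the paper invokes \cite[(3.6)]{BLPO} (a pushout over $k[\N]$ with $L_{k/k}=0$ as the third vertex) rather than the fiber sequence \cite[(3.5)]{BLPO} you use, and works with $k[\N]$ over $k$ rather than $\Z[\N]$ over $\Z$; your alternative via base change and the Koszul resolution is a pleasant bonus not present in the paper.
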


\begin{proof}
Using \cite[(3.6)]{BLPO}, we see that $L_{(k,\N)/k}$ is equivalent to the pushout of
\[
L_{k/k} \leftarrow k\otimes_{k[\N]} L_{k[\N]/k} \to k\otimes_{k[\N]} L_{(k[\N],\N)/k}
\]
This identifies with
\[
\cofib\!\bigl(k\otimes_{k[x]} k[x]\; dx \longrightarrow k\otimes_{k[x]} k[x]\; d\log x\bigr),
\]
which is $\cofib(k\xrightarrow{0} k)$.
\end{proof}

Let $X$ be a log smooth fs log scheme over $\pt_\N$. Then $X$ is integral over $\pt_{\N}$ by \cite[Proposition~III.2.5.3(3)]{Ogu}. Hence, by \cite[Corollary~8.34]{OlsCot}, we have
\begin{equation}
\label{represent.7.1}
L_{X/\pt_\N}\simeq \Omega_{X/\pt_\N}^1
\end{equation}

\begin{prop}
\label{represent.7}
Let $f\colon Y\to X$ be a log \'etale morphism of log smooth fs log schemes over $\pt_\N$. Then $L_{Y/X}\simeq 0$.
\end{prop}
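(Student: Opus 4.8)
The statement is $L_{Y/X}\simeq 0$ for a log étale morphism $f\colon Y\to X$ of log smooth fs log schemes over $\pt_\N$. The natural strategy is to combine the transitivity triangle for $Y\to X\to\pt_\N$ with the already-established identification \eqref{represent.7.1}. Explicitly, the transitivity cofiber sequence reads
\[
f^*L_{X/\pt_\N}\longrightarrow L_{Y/\pt_\N}\longrightarrow L_{Y/X},
\]
and since both $X$ and $Y$ are log smooth fs log schemes over $\pt_\N$, applying \eqref{represent.7.1} on each identifies the first two terms with $f^*\Omega^1_{X/\pt_\N}$ and $\Omega^1_{Y/\pt_\N}$ respectively. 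So it suffices to show that the canonical map $f^*\Omega^1_{X/\pt_\N}\to\Omega^1_{Y/\pt_\N}$ is an isomorphism of $\cO_Y$-modules and that $L_{Y/X}$ is concentrated in degree $0$ (so that the cofiber being zero is detected on $H^0$).

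The first point—that $f^*\Omega^1_{X/\pt_\N}\to\Omega^1_{Y/\pt_\N}$ is an isomorphism—is precisely one of the standard characterizations of log étaleness (Kato's definition via the formal lifting criterion, or equivalently via log differentials; see \cite[Theorem~IV.3.3.3]{Ogu} or the differential criterion in Ogus's book). Here I would simply cite the relevant statement: $f$ log smooth means $f^*\Omega^1_{X/\pt_\N}\to\Omega^1_{Y/\pt_\N}$ has a locally free cokernel and the formal lifting property holds, and log étale adds that this cokernel vanishes and the relative log differentials $\Omega^1_{Y/X}$ vanish. For the degree-$0$ concentration of $L_{Y/X}$: since $Y$ is log smooth over $\pt_\N$ and $\pt_\N$ is (trivially) log smooth over itself, a base-change/transitivity argument shows $L_{Y/X}$ has Tor-amplitude in $[0,0]$ once we know $L_{X/\pt_\N}$ and $L_{Y/\pt_\N}$ are both perfect in degree $0$, which is \eqref{represent.7.1}. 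Concretely, the long exact sequence on cohomology sheaves of the transitivity triangle, together with $H^i(f^*\Omega^1_{X/\pt_\N})=H^i(\Omega^1_{Y/\pt_\N})=0$ for $i\ne 0$, forces $H^i(L_{Y/X})=0$ for $i\ne 0$ and $i\ne 1$; and $H^1(L_{Y/X})$ injects into nothing relevant while $H^0$ sits in the exact sequence $0\to H^{-1}(L_{Y/X})\to f^*\Omega^1_X\to\Omega^1_Y\to H^0(L_{Y/X})\to 0$, wait—re-indexing for the cotangent complex in homological degrees $[0,\infty)$: the sequence is $0\to H_1(L_{Y/X})\to f^*\Omega^1_{X/\pt_\N}\to\Omega^1_{Y/\pt_\N}\to H_0(L_{Y/X})\to 0$, so the isomorphism $f^*\Omega^1_{X/\pt_\N}\xrightarrow{\sim}\Omega^1_{Y/\pt_\N}$ immediately gives $H_0(L_{Y/X})=H_1(L_{Y/X})=0$, and higher vanishing follows since those terms of the transitivity triangle vanish.

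**Main obstacle.** The only real content is establishing $L_{Y/X}\simeq 0$ rather than merely $H_0=H_1=0$; i.e. ruling out hidden contributions in homological degrees $\ge 2$. The clean way around this is to argue locally: log étale morphisms of fs log schemes are, strict-étale locally, of the form $X\times_{\A_P}\A_Q\to X$ with $P^\gp\to Q^\gp$ an isomorphism (Kato's structure theorem, cf.\ \cite[Lemma~A.11.3]{BPO} as used in the proof of Proposition~\ref{represent.2}), and for such explicit toric morphisms one computes $L_{Y/X}$ directly from the cotangent complex of the monoid algebra map $k[P]\to k[Q]$, which vanishes because $P^\gp\to Q^\gp$ is an isomorphism and the log structures absorb the non-integrality. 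Alternatively—and more in the spirit of the surrounding text—one invokes \cite[Corollary~8.34]{OlsCot} (Olsson's comparison) directly for $Y\to X$: since $f$ is log smooth the relative cotangent complex equals $\Omega^1_{Y/X}$, and log étaleness makes the latter zero. I would write the short version using this citation, with the transitivity triangle as a sanity check, and flag the toric-local computation only if a referee wants the degree-$\ge 2$ vanishing spelled out.
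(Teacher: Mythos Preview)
Your approach is correct and essentially identical to the paper's: use the transitivity fiber sequence for $Y\to X\to\pt_\N$, apply \eqref{represent.7.1} to both $X$ and $Y$, and conclude from $\Omega^1_{Y/X}=0$ and the short exact sequence of log differentials that $f^*\Omega^1_{X/\pt_\N}\to\Omega^1_{Y/\pt_\N}$ is an isomorphism. Your ``main obstacle'' is not an obstacle: since \eqref{represent.7.1} already places both $f^*L_{X/\pt_\N}$ and $L_{Y/\pt_\N}$ in degree~$0$, an isomorphism on $H_0$ makes the first arrow in the fiber sequence an equivalence, whence the cofiber $L_{Y/X}$ vanishes outright---no toric-local computation or separate invocation of Olsson for $Y\to X$ is needed.
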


\begin{proof}
From the transitivity fiber sequence
\[
f^*L_{X/\pt_\N} \longrightarrow L_{Y/\pt_\N} \longrightarrow L_{Y/X},
\]
it suffices to show $f^*L_{X/\pt_\N} \simeq L_{Y/\pt_\N}$. We have $L_{X/\pt_\N}\simeq \Omega_{X/\pt_\N}^1$ and $L_{Y/\pt_\N}\simeq \Omega_{Y/\pt_\N}^1$ by \eqref{represent.7.1}. Since $f$ is log \'etale, $\Omega_{Y/X}^1\cong 0$. Using the exact sequence
\[
0 \to f^*\Omega_{X/\pt_{\N}}^1 \to \Omega_{Y/\pt_{\N}}^1 \to \Omega_{Y/X}^1 \to 0
\]
finishes the proof.
\end{proof}

As in \cite{BPO2}, for a quasi-compact quasi-separated scheme $S$, let $\logSH^\eff(S)$ (resp.\ $\logSH_\ketale^\eff(S)$) denote the $\infty$-category of $\square$-invariant dividing Nisnevich (resp.\ log \'etale) sheaves of spectra on the category of log smooth fs log schemes over $S$. Here we work with fs log schemes with \'etale log structures, while \cite{BPO2} is written for Zariski log structures. The construction is independent of this choice, since every fs log scheme $X$ with an \'etale log structure admits a log blow-up $Y\to X$ such that $Y$ carries a Zariski log structure \cite[Theorem~5.4]{Niz}.

\begin{prop}
\label{represent.5}
The presheaf $X\mapsto R\Gamma_\Zar(X,\wedge^i L_{X/k})$ on the category of log smooth fs log schemes over $\pt_\N$ is a log \'etale sheaf and is $\square$-invariant for every integer $i$. Hence
\[
R\Gamma_\Zar(-,\wedge^i L_{-/k})\in \logSH_\ketale^\eff(\pt_\N)
\]
\end{prop}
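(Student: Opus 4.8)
The plan is to verify the two properties—log étale descent and $\square$-invariance—separately, in each case reducing to a statement about the cotangent complex that the earlier results in this section make accessible. For log étale descent, the key input is Proposition~\ref{represent.7}: if $f\colon Y\to X$ is log étale between log smooth fs log schemes over $\pt_\N$, then $L_{Y/X}\simeq 0$, so the transitivity sequence gives $f^*L_{X/k}\simeq L_{Y/k}$ (using that $L_{X/k}$ is computed via the transitivity triangle for $k\to \pt_\N$-schemes, together with $L_{X/\pt_\N}\simeq \Omega^1_{X/\pt_\N}$ from \eqref{represent.7.1} and Proposition~\ref{represent.4} for the base term $L_{(k,\N)/k}$, which is perfect). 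Consequently $\wedge^i L_{X/k}$ pulls back to $\wedge^i L_{Y/k}$ along any log étale map, so $X\mapsto \wedge^i L_{X/k}$ is a Cartesian section over the log étale site. Since $R\Gamma_\Zar(X,-)$ is a Zariski sheaf in $X$ and the log étale topology is generated by strict étale covers together with dividing covers, it suffices to check descent for these two classes. Strict étale descent follows from the fact that for strict étale $f$ the map $f^*L_{X/k}\simeq L_{Y/k}$ identifies the quasi-coherent complexes and $R\Gamma_\Zar$ of a quasi-coherent complex satisfies strict étale descent (indeed already Nisnevich descent, via the strict Nisnevich local structure). For dividing covers one invokes Proposition~\ref{represent.2}: since $L_{X/k}$ and hence $\wedge^i L_{X/k}$ is a perfect complex on $\ul X$ (by Proposition~\ref{represent.4}, $L_{(k,\N)/k}$ is perfect, and $L_{X/\pt_\N}\simeq\Omega^1_{X/\pt_\N}$ is perfect for $X$ log smooth, so the transitivity triangle shows $L_{X/k}$ is perfect), Proposition~\ref{represent.2} gives $\cF\xrightarrow{\simeq}\ul f_*\ul f^*\cF$ for $\cF=\wedge^i L_{X/k}$, and combined with the Cartesian-section property this upgrades to full Čech descent along the dividing cover $Y\to X$.

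For $\square$-invariance, recall $\square = (\P^1,\infty)$ and we must show $R\Gamma_\Zar(X\times\square,\wedge^i L_{-/k})\xrightarrow{\simeq}R\Gamma_\Zar(X,\wedge^i L_{-/k})$ along the zero section $X\to X\times\square$. The plan is to compute $L_{(X\times\square)/X}$. Writing $\square = (\P^1,\infty)$ over $k$, and using that the log structure at $\infty$ makes $\G_m^{\log}$-type behaviour appear, one has $L_{\square/\pt} \simeq \Omega^1_{\square/\pt}$ (log smoothness of $\square$ over $\Spec k$), and by base change $L_{(X\times\square)/X}\simeq \mathrm{pr}_2^* L_{\square/k}$; then the wedge powers $\wedge^i L_{(X\times\square)/k}$ decompose via the transitivity/Künneth triangle into terms built from $\wedge^a L_{X/k}\boxtimes \wedge^b L_{\square/k}$. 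The essential computation is that $R\Gamma_\Zar(\square,\wedge^b L_{\square/k})$ is $k$ for $b=0$ and vanishes for $b\ge 1$; equivalently that $R\Gamma(\P^1,\Omega^j_{(\P^1,\infty)/k})$—the log de Rham complex of the standard log-projective line relative to $k$—has cohomology $k$ in bidegree $(0,0)$ and $0$ otherwise. This is the logarithmic $\P^1$ analogue of the vanishing $R\Gamma(\P^1,\Omega^j)$ computations and can be checked directly by covering $\square$ by $\A^1$ and $(\A^1,0)$ (the latter being $\Spec(k[t],0)$ with a log point at the origin) with overlap $\G_m$: on $\A^1$ the relevant de Rham cohomology is that of the point, on $(\A^1,0)$ the log de Rham complex $k[t]\xrightarrow{d}k[t]\,d\log t$ has cohomology $k$ in degree $0$, and a Mayer–Vietoris / Čech argument over the two charts assembles these. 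Once this vanishing is in hand, the Künneth decomposition collapses the projection $R\Gamma_\Zar(X\times\square,\wedge^i L_{-/k})\to R\Gamma_\Zar(X,\wedge^i L_{-/k})$ to an equivalence, and similarly one checks the zero-section splitting identifies the two sides.

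The conclusion that $R\Gamma_\Zar(-,\wedge^i L_{-/k})\in\logSH_\ketale^\eff(\pt_\N)$ is then formal: $\logSH_\ketale^\eff(\pt_\N)$ is by definition the localization of presheaves of spectra on log smooth fs log schemes over $\pt_\N$ at the $\square$-equivalences and the log étale hypercovers, so an object that is both a log étale (hyper)sheaf and $\square$-invariant lies in the localizing subcategory; descent being checked on a generating class of covers (strict étale plus dividing) suffices by the description of the log étale topology recalled before Proposition~\ref{represent.5}. The main obstacle I expect is the $\square$-invariance step, specifically the precise computation of $R\Gamma(\square,\wedge^b L_{\square/k})$: one must handle the logarithmic point at $\infty$ carefully, control the wedge powers of the cotangent complex (which is not just a line bundle once one relativizes over $k$ rather than over $\pt_\N$, because of the extra $k[1]$ summand in $L_{(k,\N)/k}$ from Proposition~\ref{represent.4}), and ensure the Künneth-type decomposition is compatible with the Zariski hypercohomology and the zero-section maps. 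The log étale descent step, by contrast, is essentially bootstrapped from Propositions~\ref{represent.2} and~\ref{represent.7} with only bookkeeping left.
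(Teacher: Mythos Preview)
Your overall architecture is right, and the dividing-cover step via Propositions~\ref{represent.2} and~\ref{represent.7} together with perfectness of $L_{X/k}$ is exactly what the paper does. But there is a genuine gap in the log \'etale descent argument: your claim that the log \'etale topology is generated by strict \'etale covers and dividing covers is false. A Kummer \'etale cover such as $\A_{\frac{1}{n}\N}\to \A_\N$ for $(n,p)=1$ is neither strict nor dividing, and such covers are essential. What is true (and what the paper uses, citing \cite[Proposition~3.9]{MR3658728}) is that every log \'etale cover is refined by a Kummer \'etale cover of a dividing cover. So after handling dividing invariance you still need Kummer \'etale descent, and quasi-coherent descent along Kummer \'etale maps is not automatic---it is a nontrivial input, for which the paper appeals to \cite[Proposition~3.27]{MR2452875}. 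Your ``strict \'etale descent is just Nisnevich descent for quasi-coherent complexes'' step does not cover this case.

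For $\square$-invariance your route genuinely differs from the paper's. The paper does not compute $R\Gamma_\Zar(\square,\wedge^b L_{\square/k})$ directly; instead it filters $\wedge^i L_{X/k}$ via the transitivity sequence \eqref{represent.5.1}, identifies the graded pieces with $\wedge^d(\cO_X\oplus\cO_X[1])\otimes\Omega^{i-d}_{X/\pt_\N}$, and then cites \cite[Proposition~8.3]{BLPO} for the $\square$-invariance of $R\Gamma_\Zar(-,\Omega^j_{-/\pt_\N})$ (and simultaneously uses this same filtration to handle Kummer \'etale descent). Your K\"unneth approach is viable in principle---$\Omega^1_{(\P^1,\infty)/k}\cong\cO_{\P^1}(-1)$ so the vanishing you want does hold---but you would need to set up and justify the K\"unneth decomposition for $\wedge^i L_{X\times\square/k}$ and its compatibility with $R\Gamma_\Zar$, whereas the paper's filtration argument outsources both the Kummer \'etale and the $\square$-invariance verifications to one existing reference.
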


\begin{proof}
Let $f\colon Y\to X$ be a dividing cover of log smooth fs log schemes over $\pt_\N$. By Proposition \ref{represent.7} and the transitivity fiber sequence
\[
f^*L_{X/k}\to L_{Y/k} \to L_{Y/X},
\]
we have $f^*L_{X/k}\simeq L_{Y/k}$. Let $g\colon X\to \pt_\N$ be the structure morphism. Together with \eqref{represent.7.1}, Proposition \ref{represent.4}, and the transitivity fiber sequence
\begin{equation}
\label{represent.5.1}
g^*L_{\pt_\N/k}\to L_{X/k}\to L_{X/\pt_\N},
\end{equation}
we deduce that $L_{X/k}$ is perfect. Hence Proposition \ref{represent.2} implies
\[
R\Gamma_\Zar(X,\wedge^i L_{X/k}) \simeq R\Gamma_\Zar\!\bigl(Y, f^*(\wedge^i L_{X/k})\bigr)
\]
Since $f^*L_{X/k}\simeq L_{Y/k}$, the presheaf $X\mapsto R\Gamma_\Zar(X,\wedge^i L_{X/k})$ is dividing invariant.

Every log \'etale covering $\{V_i\to V\}$ admits a refinement $\{W_i\to W\}$ with $W\to V$ dividing and $\{W_i\to W\}$ Kummer \'etale \cite[Proposition~3.9]{MR3658728}. Hence it remains to show that the presheaf $X\mapsto R\Gamma_\Zar(X,\wedge^i L_{X/k})$ is a Kummer \'etale sheaf and $\square$-invariant.

Applying $\wedge^i$ to \eqref{represent.5.1} gives a finite filtration on $\wedge^i L_{X/k}$ whose $d$-th graded piece is $\wedge^d g^*L_{\pt_\N/k}\otimes \wedge^{i-d}L_{X/\pt_\N}$. Using \eqref{represent.7.1} and Proposition \ref{represent.4}, it suffices to show that
\[
X\longmapsto R\Gamma_\Zar\!\bigl(X, \wedge^d(\cO_X\oplus \cO_X[1])\otimes \Omega^{i-d}_{X/\pt_{\N}}\bigr)
\]
is a Kummer \'etale sheaf and $\square$-invariant. For any Kummer \'etale $g\colon X'\to X$ we have $g^*\Omega_{X/\pt_{\N}}^1\cong \Omega_{X'/\pt_{\N}}^1$. The claim then follows from \cite[Proposition~3.27]{MR2452875} and \cite[Proposition~8.3]{BLPO}.
\end{proof}

\begin{thm}
\label{represent.6}
The presheaves $\logTHH$, $\logTC^-$, $\logTP$, $\logTC$, $\cPrism$, $\Fil_\rN^{\geq i}\cPrism$, and $\Z_p^\syn(i)$ on the category of log smooth fs log schemes over $\pt_\N$ are log \'etale sheaves and are $\square$-invariant for every integer $i$. Hence
\[
\logTHH,\ \logTC^-,\ \logTP,\ \logTC,\ \cPrism,\ \Fil_\rN^{\geq i}\cPrism,\ \Z_p^\syn(i)
\in \logSH_\ketale^\eff(\pt_\N)
\]
\end{thm}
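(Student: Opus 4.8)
The plan is to realise all seven presheaves as built from the presheaves $R\Gamma_\Zar(-,\wedge^j L_{-/k})$ of Proposition~\ref{represent.5} by limits, shifts and fibres. Write $\cS\subseteq\PSh_\Sp(\lSm_{\pt_\N})$ for the full subcategory of presheaves of spectra that are log \'etale sheaves and $\square$-invariant, so that $\cS$ is (equivalent to) $\logSH_\ketale^\eff(\pt_\N)$. As the localization $\PSh_\Sp(\lSm_{\pt_\N})\to\cS$ is an exact accessible Bousfield localization of stable $\infty$-categories, $\cS$ is closed under all limits and all shifts, hence under fibres of maps between its objects; moreover, the log \'etale topology being finitary, $\cS$ is closed under filtered colimits. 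Consequently $\cS$ contains every presheaf carrying a complete, exhaustive, functorial filtration all of whose graded presheaves lie in $\cS$. The input is Proposition~\ref{represent.5} and its proof, which together show $R\Gamma_\Zar\bigl(-,\wedge^d(\cO\oplus\cO[1])\otimes_{\cO}\Omega^j_{-/\pt_\N}\bigr)\in\cS$ for all $d,j$. As a first step I would promote this to arbitrary exterior powers of the log cotangent complexes that appear below: for $X$ log smooth over $\pt_\N$, combining the transitivity fibre sequences for $X\to\pt_\N\to\Spec\F_p\to\Spec\Z_p$ with $L_{k/\F_p}\simeq 0$, with Proposition~\ref{represent.4}, and with $L_{X/\pt_\N}\simeq\Omega^1_{X/\pt_\N}$, shows that $L_{X/\Z_p}$ (and $L_{X/\F_p}$) is a perfect complex admitting a finite filtration with graded pieces $\Omega^1_{X/\pt_\N}$ and the pullback to $X$ of the perfect $k$-complex $L_{(k,\N)/\Z_p}$ (built in turn out of $k$ and $k[1]$). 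The same reduction as in the proof of Proposition~\ref{represent.5} then gives $R\Gamma_\Zar(-,\wedge^i L_{-/\Z_p})\in\cS$; $p$-completion is harmless since all these complexes are $\F_p$-linear and hence already derived $p$-complete.

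I would then treat the prismatic theories. The Nygaard filtration on $\cPrism_X$ is complete by construction, so $\cPrism_X\simeq\lim_i\cPrism_X/\Fil_\rN^{\geq i}\cPrism_X$, and each quotient is a finite extension of the Nygaard graded pieces $\gr_\rN^j\cPrism_X$, $0\le j<i$. By the log Hodge--Tate comparison --- the log analogue of the conjugate filtration on Hodge--Tate cohomology --- each $\gr_\rN^j\cPrism_X$ is, up to a Breuil--Kisin twist, a finite extension of shifts of $\wedge^l L_{X/\Z_p}$ for $0\le l\le j$; since that twist is an invertible module pulled back from $\pt_\N$ (free of rank one because $k$ is perfect) and $\wedge^l L_{X/\Z_p}$ was handled above, we get $\gr_\rN^j\cPrism_X\in\cS$, hence $\cPrism_X/\Fil_\rN^{\geq i}\in\cS$ and $\cPrism\in\cS$. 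The fibre sequence $\Fil_\rN^{\geq i}\cPrism\to\cPrism\to\cPrism/\Fil_\rN^{\geq i}$ puts $\Fil_\rN^{\geq i}\cPrism$ in $\cS$, and $\Z_p^\syn(i)=\fib(\Fil_\rN^{\geq i}\cPrism\xrightarrow{\varphi_p-\can}\cPrism)$ is a fibre of a morphism between objects of $\cS$, hence lies in $\cS$.

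Finally I would handle the topological theories using the motivic (HKR and BMS) filtrations of \cite{BPO2,BLPO2}. Since $\pt_\N$ lies over $\F_p$, the spectra $\logTHH(X)$, $\logTC^-(X)$, $\logTP(X)$, $\logTC(X)$ are $\F_p$-linear and already $p$-complete, and carry complete, exhaustive, functorial filtrations with graded pieces
\[
\gr^i\logTHH\simeq(\wedge^i L_{X/\Z_p})[i],\qquad \gr^i\logTP\simeq\cPrism_X\{i\}[2i],
\]
\[
\gr^i\logTC^-\simeq\Fil_\rN^{\geq i}\cPrism_X\{i\}[2i],\qquad \gr^i\logTC\simeq\Z_p^\syn(i)(X)[2i].
\]
Each graded piece lies in $\cS$ by the first two steps (the twists being harmless as above), so $\logTHH,\logTC^-,\logTP,\logTC\in\cS$; for the $\Z$-indexed BMS filtration on $\logTP$ one invokes closure of $\cS$ under filtered colimits to see that the partial quotients already lie in $\cS$ before passing to the limit.

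The step I expect to be the main obstacle is bookkeeping rather than conceptual: correctly identifying the graded pieces of the log Nygaard and log motivic filtrations over the right base ($\Z_p$ versus $\F_p$), and carefully tracing the transitivity sequences and the perfect complex $L_{(k,\N)/\Z_p}$ so that each graded piece is visibly a finite extension of shifts of the presheaves $R\Gamma_\Zar(-,\wedge^j\Omega^1_{-/\pt_\N})$ of Proposition~\ref{represent.5}. A secondary point requiring care is verifying that the Breuil--Kisin twists do not disturb log \'etale descent or $\square$-invariance, which reduces to the triviality of $\cO\{1\}$ as a module over $\cPrism_{(k,\N)}$ when $k$ is perfect.
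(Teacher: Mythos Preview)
Your proposal is correct and follows essentially the same strategy as the paper: reduce all seven presheaves, via the Nygaard and HKR/BMS filtrations, to exterior powers of the log cotangent complex and then invoke Proposition~\ref{represent.5}. The paper's proof is slightly more economical---it works with $L_{-/k}$ rather than $L_{-/\Z_p}$ (so your extension step is unnecessary), cites \cite[Proposition~7.4]{BLPO2} for the finite filtration on $\gr_\rN^i\cPrism$ (avoiding any discussion of Breuil--Kisin twists), and quotes \cite[Theorem~8.4.4]{BPO2} directly for the $\square$-invariance of the topological theories rather than rederiving it through the filtration---but the underlying mechanism is the same.
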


\begin{proof}
Argue as in \cite[Corollary~3.4]{BMS19} to deduce that $\logTHH$, $\logTC^-$, $\logTP$, and $\logTC$ are log \'etale sheaves from Proposition \ref{represent.5}. These are also $\square$-invariant by \cite[Theorem~8.4.4]{BPO2}.

By the definition of $\Z_p^\syn(i)$, it remains to treat $\Fil_\rN^{\geq i}\cPrism$. The Nygaard filtration is complete and exhaustive, so it suffices to show that $\gr_\rN^i\cPrism$ is a log \'etale sheaf and $\square$-invariant. Using the finite filtration on $\gr_\rN^i\cPrism$ from \cite[Proposition~7.4]{BLPO2}, we reduce to showing that $\wedge^i L_{-/k}$ is a log \'etale sheaf and $\square$-invariant, which follows from Proposition \ref{represent.5}.
\end{proof}

Following \cite{BPO2}, for a quasi-compact quasi-separated scheme $S$, write $\logSH(S)$ (resp.\ $\logSH_\ketale(S)$) for the $\infty$-category of $\P^1$-spectra in $\logSH^\eff(S)$ (resp.\ $\logSH_\ketale^\eff(S)$). Let $\unit$ denote the unit object of these symmetric monoidal $\infty$-categories.

Let $\map$ denote the mapping spectrum. As in \cite[Definition~8.5.3]{BPO2}, using the projective bundle formula for $\P^1$ \cite[Theorem~8.5.1]{BPO3} as bonding maps we obtain
\[
\blogTHH := (\logTHH,\logTHH,\ldots)\in \logSH(\pt_\N),
\]
satisfying
\[
\map_{\logSH(\pt_\N)}(\Sigma_{\P^1}^\infty X_+,\blogTHH)\simeq \logTC(X)
\]
for every log smooth fs log scheme $X$ over $\pt_\N$, and
\[
\blogTHH \simeq \Sigma^{2,1}\blogTHH
\]
Similarly, 
in the spirit of \cite[Definition~8.5.3]{BPO2} and \cite[Definition~5.12]{BPO3}, 
we define
\[
\blogTC^-,\ \blogTP,\ \blogTC,\ \bPrism,\ \Fil_\rN^{\geq i}\bPrism,\ \MZ_p^\syn(i)\in \logSH(\pt_\N)
\]

\begin{prop}
\label{logfat.5}
For all integers $e\geq 1$ and $i$,
\[
\pi_{i,j}(\blogTC(\pt_\N)_p^\wedge)
:=
\pi_i\!\left(\map_{\logSH(\pt_\N)}(\unit,\Sigma^{0,j}\blogTC)_p^\wedge\right)
\]
\[
\cong
\begin{cases}
W(k) & i-2j=-1,\\
W(k)\oplus W(k) & i-2j=0,\\
W(k) & i-2j=1,\\
\W_{m-1}(k) & i-2j=2m-1\geq 3,\\
0 & \text{otherwise.}
\end{cases}
\]
\end{prop}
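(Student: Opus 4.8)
The plan is to deduce the proposition formally from the representability and Bott periodicity of $\logTC$ established in Section~\ref{section:Representability}, together with the computation of $\pi_*\logTC$ of the log point that already follows from Corollary~\ref{logfat.2}. By Theorem~\ref{represent.6} the presheaf $\logTC$ is a $\square$-invariant log \'etale sheaf on log smooth fs log schemes over $\pt_\N$, hence defines an object of $\logSH_\ketale^\eff(\pt_\N)$; feeding this into the construction recalled after Theorem~\ref{represent.6} — using the projective bundle formula for $\P^1$ of \cite[Theorem~8.5.1]{BPO3} as bonding maps — produces $\blogTC\in\logSH(\pt_\N)$ together with the Bott periodicity equivalence $\blogTC\simeq\Sigma^{2,1}\blogTC$ and the representability identification
\[
\map_{\logSH(\pt_\N)}(\Sigma_{\P^1}^\infty X_+,\blogTC)\simeq\logTC(X)
\]
for every log smooth fs log scheme $X$ over $\pt_\N$. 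Applying this with $X=\pt_\N$, so that $\Sigma_{\P^1}^\infty(\pt_\N)_+\simeq\unit$, yields $\map_{\logSH(\pt_\N)}(\unit,\blogTC)\simeq\logTC(\pt_\N)$.

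\textbf{Reduction and computation.} Next I would use Bott periodicity to discard the motivic weight: iterating $\blogTC\simeq\Sigma^{2,1}\blogTC$ identifies $\Sigma^{0,j}\blogTC$ with a simplicial (de)suspension of $\blogTC$, so that, since $p$-completion commutes with $\map_{\logSH(\pt_\N)}(\unit,-)$,
\[
\pi_{i,j}(\blogTC(\pt_\N)_p^\wedge)\cong\pi_{i-2j}\bigl(\map_{\logSH(\pt_\N)}(\unit,\blogTC)_p^\wedge\bigr)\cong\pi_{i-2j}\bigl(\logTC(\pt_\N)_p^\wedge\bigr).
\]
In particular the left-hand side depends only on $n:=i-2j$ and on no auxiliary parameter, so the quantifier over $e$ is vacuous. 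Now $\pt_\N=\Spec(k,\N)$ is precisely $\Spec(k[x]/x^1,\N)$ with the log structure $n\mapsto x^n$, so the claim follows by reading off Corollary~\ref{logfat.2} at $e=1$: in degree $n=1$ one gets $W(k)\oplus\W_{e-1}(k)=W(k)\oplus\W_0(k)=W(k)$, since the ring of big Witt vectors truncated at the empty index set vanishes, and in degree $n=2m-1\ge 3$ one gets $\W_{em-1}(k)=\W_{m-1}(k)$; the remaining degrees are as in Corollary~\ref{logfat.2}.

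\textbf{Main difficulty.} The substantive input — the syntomic computation of Theorem~\ref{logfat.1}, its translation into $\pi_*\logTC$ in Corollary~\ref{logfat.2}, and the representability plus Bott periodicity of Theorem~\ref{represent.6} via the projective bundle formula of \cite{BPO3} — is already in hand, so there is no genuine obstacle. The one point that needs care is the bigraded bookkeeping: confirming that the chosen bonding maps really do yield the periodicity $\blogTC\simeq\Sigma^{2,1}\blogTC$ compatibly with $p$-completion, and that the ensuing degree shift relating $\pi_{i,j}(\blogTC(\pt_\N)_p^\wedge)$ to $\pi_{i-2j}(\logTC(\pt_\N)_p^\wedge)$ is exactly the one stated. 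Beyond this conventions check, the proposition is a formal corollary of the results already established.
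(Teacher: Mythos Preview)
Your proposal is correct and follows essentially the same route as the paper: the paper's proof reads ``Immediate from Corollary~\ref{logfat.2},'' and you have simply unpacked this by invoking the representability and Bott periodicity of $\blogTC$ from the discussion after Theorem~\ref{represent.6} to reduce $\pi_{i,j}$ to $\pi_{i-2j}(\logTC(\pt_\N)_p^\wedge)$, and then specializing Corollary~\ref{logfat.2} to $e=1$. Your observation that the quantifier over $e$ is vacuous and your handling of $\W_0(k)=0$ are both correct.
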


\begin{proof}
Immediate from Corollary \ref{logfat.2}.
\end{proof}

\section{Log syntomic cohomology of projective log coordinate axes}

This section computes the logarithmic syntomic cohomology of the projective coordinate axes 
\( D = (\P^2, \ul{D}) \times_{\P^2} \ul{D} \), 
where \(\ul{D}\) is the union of the coordinate lines in \(\P^2\). 
Using toric geometry, 
\(D\) is expressed as a log fiber product of toric varieties, allowing a description in terms of fans and cones. 
A key step is the identification 
\[
\Sigma_+^\infty
\T_{\langle \sigma,\tau,\tau'\rangle,[e_1]+[e_2]}
\simeq
\unit \oplus \Sigma^{2,1}\unit
\]
in \(\logSH^\eff(\A_\N)\), 
established through Zariski distinguished squares and dividing covers. 
This leads to the main result
\[
\Z_p^\syn(i)(D)
\simeq
\Z_p^\syn(i)(k,\N)
\oplus
\Z_p^\syn(i-1)(k,\N)[-2],
\]
relating the syntomic cohomology of \(D\) to that of the logarithmic point. 
Together with Theorem~\ref{logfat.1}, this gives a complete computation of \(\Z_p^\syn(i)(D)\).
\vspace{0.1in}

Let $k$ be a perfect $\F_p$-algebra.
Consider the projective coordinate axes in $\P^2$ given by
\[
\ul{D}
:=
\{[x:y:z]:
x=0
\text{ or }
y=0
\}
\]
We have the associated log scheme $(\P^2,\ul{D})$ whose underlying scheme is $\P^2$ with the compactifying log structure associated with the open immersion $\P^2-\ul{D}\to \P^2$
and we set
\[
D:=(\P^2,\ul{D})\times_{\P^2}\ul{D}
\]
This is a basic example of a semistable fs log scheme.
The goal of this section is to compute $\Z_p^\syn(i)(D)$ for each integer $i$.

To prepare for later arguments, we introduce notation from toric geometry.
For cones $\sigma_1,\ldots,\sigma_n$ in a lattice $N$, let $\langle \sigma_1,\ldots,\sigma_n\rangle$ denote the smallest fan in $N$ containing $\sigma_1,\ldots,\sigma_n$.
For a smooth fan $\Sigma$ and rays $r_1,\ldots,r_d\in \Sigma$, let $\T_{\Sigma,[r_1]+\cdots+[r_d]}$ denote the fs log scheme whose underlying scheme is the associated toric scheme over $k$ with the Deligne–Faltings log structure \cite[Section III.1.7]{Ogu} associated with the Cartier divisors induced by the rays $r_1,\ldots,r_d$.
If $r_1,\ldots,r_d$ are all the rays of $\Sigma$, then $\T_{\Sigma,[r_1]+\cdots+[r_d]}=\T_\Sigma$.

Let $e_1$ and $e_2$ be the standard coordinates in $\N^2$.
Consider the cones in $\N^2$
\begin{gather*}
\sigma:=\Cone(e_1,e_2) \\
\tau:=\Cone(e_1,e_1-e_2) \\
\tau':=\Cone(e_2,e_2-e_1)
\end{gather*}
There is an isomorphism
\[
D
\cong
\T_{\langle \sigma,\tau,\tau' \rangle,[e_1]+[e_2]}
\times_{\A_\N}
\pt_\N
\]
where $\T_{\langle \sigma,\tau,\tau' \rangle,[e_1]+[e_2]}\to \A_\N$ is the log smooth morphism induced by the summation homomorphism $\N^2\to \N$.
This isomorphism follows from the local descriptions
\begin{gather*}
\T_{\langle \sigma \rangle,[e_1]+[e_2]} \times_{\A_\N}\pt_\N
\cong
\Spec(k[x,y]/(xy),\N x\oplus \N y) \\
\T_{\langle \tau\rangle,[e_1]}\times_{\A_\N} \pt_\N
\cong
\Spec(k[xy,y^{-1}]/(xy),\N (xy)) \\
\T_{\langle \tau'\rangle,[e_2]}\times_{\A_\N} \pt_\N
\cong
\Spec(k[xy,x^{-1}]/(xy),\N (xy))
\end{gather*}
For a ring $R$ and an element $a\in R$, the log ring $(R,\N a)$ means that the structure homomorphism $\N\to R$ sends $1$ to $a$. 
We use a similar convention for $(R,\N a\oplus \N b)$ with $a,b\in R$.

\begin{prop}
\label{axes.4}
There is an equivalence
\[
\Sigma_+^\infty
\T_{\langle \sigma,\tau,\tau'\rangle,[e_1]+[e_2]}
\simeq
\unit \oplus \Sigma^{2,1}\unit
\]
in $\logSH^\eff(\A_\N)$.
\end{prop}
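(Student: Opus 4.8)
The plan is to compute $\Sigma_+^\infty Y$ for $Y:=\T_{\langle\sigma,\tau,\tau'\rangle,[e_1]+[e_2]}$ by feeding the Zariski distinguished squares attached to the three maximal cones into $\logSH^\eff(\A_\N)$, and then simplifying the resulting pieces with $\square$-invariance, the projective bundle theorem for $\P^1$, and the invertibility of dividing covers. First I would record the three toric charts of $Y$ as fs log schemes over $\A_\N$: writing $u,v$ for the coordinates on $\T_{\langle\sigma\rangle}\cong\A^2$ and $w=uv$ for the coordinate on $\A_\N$, one has $\T_{\langle\sigma\rangle,[e_1]+[e_2]}\cong\A_{\N^2}$ with structure map $(u,v)\mapsto uv$, while $\T_{\langle\tau\rangle,[e_1]}\cong\Spec(k[uv,v^{-1}],\N(uv))\cong\A_\N\times_k\A^1$ and $\T_{\langle\tau'\rangle,[e_2]}\cong\A_\N\times_k\A^1$ with structure maps the projections; after reparametrising, the pairwise intersections become $\A_\N\times_k\G_m$ over $\A_\N$ via the projection and the triple intersection becomes $\G_m^2$ over $\A_\N$ via $\G_m\hookrightarrow\A_\N$. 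This is a Zariski, hence dividing Nisnevich, hypercover of $Y$, so $\Sigma_+^\infty Y$ is the totalisation of the \v{C}ech complex of the $\Sigma_+^\infty$ of these pieces; the two copies of $\G_m^2$ cancel, leaving a short cofibre sequence that determines $\Sigma_+^\infty Y$ from $\Sigma_+^\infty\A_{\N^2}$ (multiplication map), $\Sigma_+^\infty(\A_\N\times_k\A^1)$ and $\Sigma_+^\infty(\A_\N\times_k\G_m)$.

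Next I would evaluate the building blocks. The $\A^1$- and $\G_m$-directions are handled by $\square$-invariance: using the Zariski cover $\A_\N\times_k\square=(\A_\N\times_k\A^1)\cup(\A_\N\times_k\A_\N)$ with intersection $\A_\N\times_k\G_m$ and the equivalence $\Sigma_+^\infty(\A_\N\times_k\square)\simeq\Sigma_+^\infty\A_\N=\unit$, one expresses $\Sigma_+^\infty(\A_\N\times_k\A^1)$ in terms of $\Sigma_+^\infty\A_\N$ and $\Sigma_+^\infty\G_m$ pulled back from $\Spec k$, these last two being determined by the same mechanism (or quoted from the toric computations underlying $\logSH$). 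For the node chart $\A_{\N^2}$ with its multiplication map — which is not $\square$-invariant over $\A_\N$, its generic fibre being $\G_m$ — I would pass to the log blow-up at the log ideal $(u,v)$: this is a dividing cover, hence a motivic equivalence, and its two new charts are again of the shapes $\A_\N\times_k\A^1$ and $\A_{\N^2}$, but glued so as to solve for $\Sigma_+^\infty\A_{\N^2}$ in terms of pieces already understood. Assembling the cofibre sequences and cancelling the $\G_m$- and $\A^1$-contributions leaves $\Sigma_+^\infty Y\simeq\unit\oplus\Sigma^{2,1}\unit$. The summand $\unit$ is split off by the section $\A_\N\hookrightarrow Y$, $w\mapsto(u,v)=(w,1)$, landing in the $\sigma$-chart; the complementary summand is the Tate twist carried by the relative $\P^1$, since over the open $\G_m\subset\A_\N$ the log scheme $Y$ is the trivial bundle $\G_m\times\P^1$ and the projective bundle theorem together with the gluing along the node chart matches the residual term with $\Sigma^{2,1}\unit$.

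The main obstacle is analytic rather than combinatorial. Tracking log structures and structure maps through the identifications of the toric charts, checking that the intersection maps and the section are morphisms of fs log schemes over $\A_\N$, and organising the iterated Mayer--Vietoris for the three-chart cover are routine if tedious. The genuine difficulty is that $\logSH$ is not $\A^1$-invariant, so the building blocks $\Sigma_+^\infty\A^1$, $\Sigma_+^\infty\A_\N$, $\Sigma_+^\infty\G_m$ over $\Spec k$ and $\Sigma_+^\infty\A_{\N^2}$ over $\A_\N$ are not formal; pinning them down requires the $\square$-invariance reductions above together with the purity available for the logarithmic boundary (whose Thom spaces involve $\Sigma^{2,1}\Sigma_+^\infty\pt_\N$), or the foundational toric computations of $\logSH$, and then verifying that in the assembly for $Y$ every contribution other than $\unit\oplus\Sigma^{2,1}\unit$ cancels.
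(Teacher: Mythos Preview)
Your strategy of running Mayer--Vietoris on the three toric charts is reasonable, and your observation that the $\tau$-$\tau'$ intersection and the triple intersection are both $\G_m^2$ (so that they cancel in the \v{C}ech complex) is correct. But the argument breaks down at the node chart $\A_{\N^2}$. When you blow up the origin, the resulting log scheme must carry the log structure $[e_1]+[e_2]+[e_1+e_2]$: the exceptional divisor has to be included for the blow-up to be log \'etale (hence a dividing cover), and also for the map to $\A_\N$ to exist as a morphism of log schemes, since $t=xy$ vanishes along the exceptional locus. With this log structure, \emph{both} affine charts are again copies of $\A_{\N^2}$: for instance $\Spec k[y,x/y]$ carries $\N(x/y)\oplus\N y$, and the structure map to $\A_\N$ sends $t\mapsto (x/y)\,y^2$. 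So the blow-up trades one copy of $\A_{\N^2}\to\A_\N$ (via $uv$) for two copies with structure maps of higher degree ($uv^2$ and $u^2v$); there is no inductive simplification, and you cannot solve for $\Sigma_+^\infty\A_{\N^2}$ this way.

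There is a second gap in the ``easy'' pieces. The $\square$-cover you describe yields a cofiber sequence relating $\Sigma_+^\infty(\A_\N\times\A^1)$, $\Sigma_+^\infty(\A_\N\times\A_\N)$ and $\Sigma_+^\infty(\A_\N\times\G_m)$, but does not determine any of them: you are left needing $\Sigma_+^\infty\A_\N$ and $\Sigma_+^\infty\G_m$ in $\logSH^\eff(k)$, which are not known and not obtainable by iterating the same trick (you run out of independent relations). The paper avoids both problems by never computing any absolute building block. It writes down the map $Y\to\T_{\langle\sigma\cup\tau,\tau'\rangle,[e_2]}\cong\P^1\times\A_\N$ and shows it is a motivic equivalence: matched Zariski squares reduce this to $\T_{\langle\sigma,\tau\rangle,[e_1]+[e_2]}\to\T_{\langle\sigma\cup\tau\rangle,[e_2]}$; a second Zariski reduction re-compactifies both sides by adding $\tau'$ and the ray $v=-e_1+e_2$ to the log structure; the compactified target is then a $\square$-bundle over $\A_\N$, while the compactified source $\T_{\langle\sigma,\tau,\tau'\rangle,[e_1]+[e_2]+[v]}$ is a dividing cover of another $\square$-bundle $\T_{\langle\sigma\cup\tau',\tau\rangle,[e_1]+[v]}$. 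Everything reduces to Zariski Mayer--Vietoris, $\square$-bundle triviality, and dividing-cover invariance---never the motivic type of $\A^1$, $\G_m$, or $\A_{\N^2}$ individually.
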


\begin{proof}
It suffices to show that the induced morphism
\[
\Sigma_+^\infty \T_{\langle \sigma,\tau,\tau' \rangle,[e_1]+[e_2]}
\to
\Sigma_+^\infty \T_{\langle \sigma\cup \tau,\tau'\rangle,[e_2]}
\]
is an equivalence, since $\T_{\langle \sigma\cup \tau,\tau'\rangle,[e_2]}\cong \P^1 \times \A_\N$.
The squares
\[
\begin{tikzcd}[column sep=small]
\T_{\langle \sigma\cap \tau'\rangle,[e_2]}\ar[r]\ar[d]&
\T_{\langle \sigma,\tau\rangle,[e_1]+[e_2]}\ar[d] \\
\T_{\langle \tau' \rangle,[e_2]}\ar[r]&
\T_{\langle \sigma,\tau,\tau' \rangle,[e_1]+[e_2]}
\end{tikzcd}
\qquad
\begin{tikzcd}[column sep=small]
\T_{\langle \sigma\cap \tau'\rangle,[e_2]}\ar[r]\ar[d]&
\T_{\langle \sigma\cup \tau\rangle,[e_2]}\ar[d] \\
\T_{\langle \tau' \rangle,[e_2]}\ar[r]&
\T_{\langle \sigma\cup \tau,\tau' \rangle,[e_2]}
\end{tikzcd}
\]
are Zariski distinguished squares.
Hence it suffices to show that the induced morphism
\[
\Sigma_+^\infty \T_{\langle \sigma,\tau \rangle,[e_1]+[e_2]}
\to
\Sigma_+^\infty \T_{\langle \sigma\cup \tau\rangle,[e_2]}
\]
is an equivalence.
The squares
\[
\begin{tikzcd}[column sep=small]
\T_{\langle \sigma\cap \tau'\rangle,[e_2]}\ar[r]\ar[d]&
\T_{\langle \sigma,\tau\rangle,[e_1]+[e_2]}\ar[d] \\
\T_{\langle \tau' \rangle,[e_2]+[v]}\ar[r]&
\T_{\langle \sigma,\tau,\tau' \rangle,[e_1]+[e_2]+[v]}
\end{tikzcd}
\qquad
\begin{tikzcd}[column sep=small]
\T_{\langle \sigma\cap \tau'\rangle,[e_2]}\ar[r]\ar[d]&
\T_{\langle \sigma\cup \tau\rangle,[e_2]}\ar[d] \\
\T_{\langle \tau' \rangle,[e_2]+[v]}\ar[r]&
\T_{\langle \sigma\cup \tau,\tau' \rangle,[e_2]+[v]}
\end{tikzcd}
\]
with $v:=-e_1+e_2$ are also Zariski distinguished squares.
Thus it suffices to show that the induced morphism
\[
\Sigma_+^\infty \T_{\langle \sigma,\tau,\tau' \rangle,[e_1]+[e_2]+[v]}
\to
\Sigma_+^\infty \T_{\langle \sigma\cup \tau,\tau'\rangle,[e_2]+[v]}
\]
is an equivalence.

We have equivalences
\[
\Sigma_+^\infty
\T_{\langle \sigma \cup \tau,\tau'\rangle,[e_2]+[v]}
\simeq
\unit
\simeq
\Sigma_+^\infty
\T_{\langle \sigma\cup \tau',\tau\rangle,[e_1]+[v]}
\]
since both $\T_{\langle \sigma \cup \tau,\tau'\rangle,[e_2]+[v]}$ and $\T_{\langle \sigma\cup \tau',\tau\rangle,[e_1]+[v]}$ are $\square$-bundles over $\A_\N$.
Hence it suffices to show that the induced morphism
\[
\Sigma_+^\infty \T_{\langle \sigma,\tau,\tau' \rangle,[e_1]+[e_2]+[v]}
\to
\Sigma_+^\infty \T_{\langle \sigma\cup \tau',\tau\rangle,[e_1]+[v]}
\]
is an equivalence.
This follows since the morphism
\[
\T_{\langle \sigma,\tau,\tau' \rangle,[e_1]+[e_2]+[v]}
\to
\T_{\langle \sigma\cup \tau',\tau\rangle,[e_1]+[v]}
\]
is a dividing cover.
\end{proof}

\begin{thm}
\label{axes.3}
For the log scheme $D$ defined above, there is an equivalence of complexes
\[
\Z_p^\syn(i)(D)
\simeq
\Z_p^\syn(i)(k,\N)
\oplus
\Z_p^\syn(i-1)(k,\N)[-2]
\]
for every integer $i$.
\end{thm}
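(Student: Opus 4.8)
The plan is to deduce the statement from Proposition~\ref{axes.4} by base change to the log point and then to feed the resulting splitting of $\Sigma_+^\infty D$ into the representability of $\Z_p^\syn(i)$ established in Theorem~\ref{represent.6}. First I would push Proposition~\ref{axes.4} along the structure morphism $\pt_\N\to\A_\N$: this induces a symmetric monoidal, colimit-preserving pullback functor $\logSH^\eff(\A_\N)\to\logSH^\eff(\pt_\N)$ which commutes with $\Sigma^{2,1}$ and carries $\Sigma_+^\infty Y$ to $\Sigma_+^\infty(Y\times_{\A_\N}\pt_\N)$ for every log smooth fs log scheme $Y$ over $\A_\N$. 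Applying it, and using the identification $D\cong\T_{\langle\sigma,\tau,\tau'\rangle,[e_1]+[e_2]}\times_{\A_\N}\pt_\N$ recalled above, yields an equivalence $\Sigma_+^\infty D\simeq\unit\oplus\Sigma^{2,1}\unit$ in $\logSH^\eff(\pt_\N)$, hence also $\Sigma_{\P^1}^\infty D_+\simeq\unit\oplus\Sigma^{2,1}\unit$ in $\logSH(\pt_\N)$ after $\P^1$-stabilization.

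Next, by Theorem~\ref{represent.6} the presheaf $\Z_p^\syn(i)$ is a $\square$-invariant log \'etale sheaf on log smooth fs log schemes over $\pt_\N$, so it is represented by $\MZ_p^\syn(i)\in\logSH(\pt_\N)$; explicitly, $\Z_p^\syn(i)(X)\simeq\map_{\logSH(\pt_\N)}(\Sigma_{\P^1}^\infty X_+,\MZ_p^\syn(i))$ for every such $X$. Mapping the splitting of the previous step into $\MZ_p^\syn(i)$ gives
\[
\Z_p^\syn(i)(D)\simeq\map_{\logSH(\pt_\N)}(\unit,\MZ_p^\syn(i))\oplus\map_{\logSH(\pt_\N)}(\Sigma^{2,1}\unit,\MZ_p^\syn(i)),
\]
whose first summand is $\Z_p^\syn(i)(\pt_\N)=\Z_p^\syn(i)(k,\N)$. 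For the second summand I would invoke the projective bundle formula for log syntomic cohomology — equivalently, the Tate-twist structure on the family $\{\MZ_p^\syn(j)\}_j$ built into its construction — to identify $\map_{\logSH(\pt_\N)}(\Sigma^{2,1}\unit,\MZ_p^\syn(i))$, which is the reduced syntomic cohomology of $(\P^1,\infty)$ over $\pt_\N$, with $\Z_p^\syn(i-1)(k,\N)[-2]$. Combining the two identifications gives the asserted decomposition, and together with Theorem~\ref{logfat.1} applied with $e=1$ (so that $k[x]/x^e=k$) this makes the computation of $\Z_p^\syn(i)(D)$ completely explicit.

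The genuine geometric content — rewriting $D$ as a log fibre product of toric log schemes and collapsing it via dividing covers and Zariski distinguished squares — has already been carried out in Proposition~\ref{axes.4}, so the remaining work is largely formal. The one point I expect to require care is the last step: pinning down that the $\Sigma^{2,1}\unit$-summand is detected precisely by $\Z_p^\syn(i-1)(k,\N)[-2]$, and not by an untwisted or differently shifted copy of the syntomic cohomology of the log point — that is, controlling both the weight shift and the cohomological shift of $2$ in the projective bundle formula for the log syntomic motivic spectra. The base change in the first step and the bookkeeping in between are routine consequences of the six-functor formalism for $\logSH$ and of Theorem~\ref{represent.6}.
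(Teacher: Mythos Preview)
Your proposal is correct and follows essentially the same route as the paper's own proof: base change Proposition~\ref{axes.4} along $\pt_\N\to\A_\N$ to obtain $\Sigma_+^\infty D\simeq\unit\oplus\Sigma^{2,1}\unit$ in $\logSH^\eff(\pt_\N)$, then invoke the projective bundle formula for $\P^1$ (the paper cites \cite[Construction~5.9]{BPO3}) to identify the $\Sigma^{2,1}\unit$-summand with $\Z_p^\syn(i-1)(k,\N)[-2]$. The step you flagged as requiring care is exactly the one the paper dispatches by reference to that external construction, so your instinct there is well placed.
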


\begin{proof}
Using the pullback functor $\logSH^\eff(\A_\N)\to \logSH^\eff(\pt_\N)$,
Proposition~\ref{axes.4} gives an equivalence
\[
\Sigma_+^\infty D
\simeq
\unit \oplus \Sigma^{2,1}\unit
\]
in $\logSH^\eff(\pt_\N)$.
Combining this with the projective bundle formula for $\P^1$ in \cite[Construction~5.9]{BPO3} yields the claim.
\end{proof}

Combining this result with Theorem~\ref{logfat.1} determines $\Z_p^\syn(i)(D)$.

\bibliography{references}
\bibliographystyle{siam}

\end{document}